\renewcommand{\hat}{\widehat}
\renewcommand{\tilde}{\widetilde}
\newcommand{\op}{{\operatorname{op}}}
\newcommand{\alg}{{\operatorname{alg}}}
\newcommand{\Id}{\operatorname{Id}}
\newcommand{\Z}{\mathbb{Z}}
\newcommand{\half}{{\textstyle\frac12}}
\newcommand{\qf}[1]{{\langle{#1}\rangle}} 
\newcommand{\ba}{\overline{\rule{2.5mm}{0mm}\rule{0mm}{4pt}}} 
\newcommand{\sfA}{\mathsf{A}}
\newcommand{\sfD}{\mathsf{D}}
\newcommand{\sfE}{\mathsf{E}}
\DeclareMathOperator{\End}{End}
\DeclareMathOperator{\ad}{ad}
\DeclareMathOperator{\gr}{\mathsf{gr}}
\DeclareMathOperator{\Int}{Int}
\DeclareMathOperator{\Aut}{Aut}
\DeclareMathOperator{\Cor}{Cor}
\DeclareMathOperator{\charac}{char}
\DeclareMathOperator{\Nrd}{Nrd}
\DeclareMathOperator{\nrd}{Nrd}
\DeclareMathOperator{\Trd}{Trd}
\DeclareMathOperator{\disc}{disc}
\DeclareMathOperator{\Sym}{Sym}
\DeclareMathOperator{\GO}{GO}
\DeclareMathOperator{\GSp}{GSp}
\DeclareMathOperator{\PGO}{PGO}
\DeclareMathOperator{\PGU}{PGU}
\DeclareMathOperator{\Sim}{Sim}
\DeclareMathOperator{\gPGO}{\mathbf{PGO}}
\DeclareMathOperator{\gPGU}{\mathbf{PGU}}
\DeclareMathOperator{\gSU}{\mathbf{SU}}
\DeclareMathOperator{\gPGL}{\mathbf{PGL}}
\DeclareMathOperator{\gSpin}{\mathbf{Spin}}
\DeclareMathOperator{\gAut}{\mathbf{Aut}}
\newtheorem{prop}{Proposition}[section]
\newtheorem{cor}[prop]{Corollary}
\newtheorem{thm}[prop]{Theorem}
\newtheorem{lemma}[prop]{Lemma}
\theoremstyle{definition}
\newtheorem{example}[prop]{Example}
\theoremstyle{remark}
\newtheorem{remarkk}[prop]{Remark}
\title[Outer automorphisms of classical algebraic groups]{Outer
  automorphisms of classical algebraic groups}  
\author{Anne Qu\'eguiner-Mathieu}
\address{Universit\'e Paris 13\\
Sorbonne Paris Cit\'e\\
LAGA - CNRS (UMR 7539)\\
F-93430 Villetaneuse, France}
\email{queguin@math.univ-paris13.fr}
\thanks{The first author acknowledges the support of the French Agence Nationale de la Recherche (ANR) under reference ANR-12-BL01-0005.}
\author{Jean-Pierre Tignol}
\address{ICTEAM Institute, Box L4.05.01\\
Universit\'e catholique de Louvain\\
B-1348 Louvain-la-Neuve, Belgium}
\email{jean-pierre.tignol@uclouvain.be}
\thanks{The second author is grateful to the first author
  and the Universit\'e Paris 13 for their hospitality while the work
  for this paper was carried out. He acknowledges support
    from the Fonds de la Recherche Scientifique--FNRS under grant
    n$^\circ$~J.0014.15.}
\keywords{Linear algebraic group, outer automorphism, hermitian form,
  involution} 
\subjclass[2010]{Primary: 20G15; Secondary: 11E57}
\date{October 15, 2016}
\begin{document}
\maketitle
\begin{abstract}
  The so-called Tits class, associated to an adjoint absolutely almost
  simple algebraic group, provides a cohomological obstruction for
  this group to admit an outer automorphism. If the group has inner
  type, this obstruction is the only one. In this paper, we prove this
  is not the case for classical groups of outer type, except
  for groups of type $^2\sfA_n$ with 
  $n$ even, or $n=5$.  More precisely, we prove a descent theorem for
  exponent $2$ and degree $6$ algebras with unitary involution, which
  shows that their automorphism groups have outer automorphisms. In
  all other relevant classical types, namely $^2\sfA_n$ with $n$ odd,
  $n\geq3$ and $^2\sfD_n$, we provide explicit examples where the Tits
  class obstruction vanishes, and yet the group does not have
  outer automorphism. As a crucial tool, we use ``generic''
    sums of algebras with involution.
\end{abstract}

\selectlanguage{french}
\begin{abstract}
  \`A tout groupe alg\'ebrique absolument presque simple de type adjoint
  est associ\'ee une classe de cohomologie connue sous le nom de
  \flqq classe de Tits\frqq\, qui donne une obstruction \`a l'existence
  d'automorphismes ext\'erieurs. Si le groupe est de type int\'erieur, il
  n'y a pas d'autre obstruction. Dans ce travail, nous montrons qu'il
  n'en va pas de m\^eme pour les groupes classiques de type ext\'erieur,
  sauf pour les groupes de type $^2\sfA_n$ avec $n$ pair ou
  $n=5$. Plus pr\'ecis\'ement, nous \'etablissons 
  pour les alg\`ebres \`a involution unitaire de degr\'e~$6$ et
  d'exposant~$2$ un th\'eor\`eme de descente qui montre que les groupes
  d'automorphismes de ces alg\`ebres ont des automorphismes
  ext\'erieurs. Pour les types $^2\sfA_n$ avec $n$ impair, $n\geq3$, et
  les types $^2\sfD_n$, nous construisons des exemples explicites o\`u
  l'obstruction donn\'ee par la classe de Tits est nulle alors que le
  groupe ne poss\`ede pas d'automorphisme ext\'erieur. Un outil crucial de
  nos constructions
  est la somme \flqq g\'en\'erique\frqq\ d'alg\`ebres \`a involution.
\end{abstract}
\selectlanguage{english}

\section{Introduction}  

Every automorphism of an absolutely almost simple algebraic group
scheme $G$ of adjoint type over an arbitrary field $F$ induces an
automorphism of its Dynkin diagram $\Delta$. Inner automorphisms of
$G$ act trivially on $\Delta$, and there is an exact sequence of
algebraic group schemes
\begin{equation}
\label{eq:exact}
1\to G \to \gAut(G) \to \gAut(\Delta) \to 1,
\end{equation}
see \cite[Exp.~XXIV, 1.3, 3.6]{SGA}. If $G$ is split, the
corresponding sequence of groups of rational points is exact and
split, see~\cite[(25.16)]{KMRT}, \cite[\S16.3]{Sp}. Therefore, a split
adjoint group $G$ 
admits outer automorphisms if and only if its Dynkin diagram admits
automorphisms, i.e., if $G$ has type $\sfA_n$ with $n\geq 2$, $\sfD_n$
with $n\geq 3$ or $\sfE_6$. Moreover, in all three cases,  
$\gAut(\Delta)(F)$ lifts to an isomorphic subgroup in
$\gAut(G)(F)$. This property does not hold generally for nonsplit
groups. For instance, if $G$ is the connected component of the
identity in the group scheme of automorphisms of a central simple
$F$-algebra with quadratic pair $(A,\sigma,f)$, then $G$ has no outer
automorphisms if $A$ is not split by the quadratic \'etale $F$-algebra
defined by the discriminant of the quadratic pair, see
\S\,\ref{translation-orthogonal.section} below. More generally,
Garibaldi identified in \cite[\S2]{Gar} a 
cohomological obstruction to the existence of outer automorphisms of
an arbitrary absolutely almost simple algebraic group scheme $G$: the
group $\gAut(\Delta)(F)$ acts on $H^2(F,C)$, where $C$ is the center
of the simply connected group scheme isogenous to $G$, and the Tits
class $t_G\in H^2(F,C)$ is invariant under the action of the image of
$\gAut(G)(F)$ in $\gAut(\Delta)(F)$. Therefore, automorphisms of
$\Delta$ that do not leave $t_G$ invariant do not lift to outer
automorphisms of $G$. For adjoint or simply connected groups of inner
type, Garibaldi showed in 
\cite[\S2]{Gar} that this is the only obstruction to the lifting of
automorphisms of $\Delta$. As he explains in~\cite[Thm 11]{Gar} this
has interesting consequences in Galois cohomology. In a subsequent
paper, Garibaldi--Petersson \cite[Conjecture~1.1.2]{GP} 
conjectured that this Tits class obstruction is the only obstruction,
also for adjoint or simply connected groups of outer type.

In this paper, we provide a complete answer to the question raised by
Garibaldi and Petersson for groups of outer type $\sfA$ and $\sfD$,
leaving aside trialitarian groups (see the Appendix). Thus, in all the
cases we consider, $\gAut(\Delta)(F)$ has order~$2$. Our main goal is
to compare the following 
three conditions, listed from weaker to stronger: 
\begin{description}
\item[(Out~1)] The Tits class $t_G$ is fixed under $\gAut(\Delta)(F)$;
\item[(Out~2)] $G$ admits an outer automorphism defined over $F$;
\item[(Out~3)] $G$ admits an outer automorphism of order $2$ defined
  over $F$.  
\end{description}
Under condition (Out~2), the sequence
\[
1\to G(F)\to \gAut(G)(F)\to
\gAut(\Delta)(F)\to 1
\]
is exact, and under condition (Out~3), it is split. 
In~\cite{Gar}, Garibaldi proves that all three conditions are
equivalent if $G$ has inner type $\sfA$ or $\sfD$ (see
Remarks~\ref{innerA.rem} and \ref{orthogonal.rem}).  
This is not the case for groups of
outer type, and our main result is the following: 
 
\begin{thm}
  \label{main.thm}
  Let $G$ be an absolutely almost simple adjoint or simply connected
  algebraic group 
  scheme of type $^2\sfA_n$, with $n\geq 2$, or $^2\sfD_n$, with
  $n\geq 3$.
  \begin{enumerate}
  \item[(1)] If $G$ has type $^2\sfA_n$, with $n$ even, or $^2\sfA_5$, then
    conditions (Out~1), (Out~2) and (Out~3) are equivalent.
  \item[(2)] In all the other types, there are examples of groups for which
    (Out~1) holds and (Out~2) does not hold, and examples of groups
    for which (Out~2) holds and (Out~3) does not hold.
  \end{enumerate}
\end{thm}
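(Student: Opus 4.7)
The plan is to translate each of (Out~1), (Out~2), (Out~3) into a statement about the central simple algebra with involution attached to $G$. For type $^2\sfA_n$, $G$ is (up to isogeny) $\gPGU(B,\tau)$, where $B$ is a central simple algebra of degree $n+1$ over a quadratic separable extension $K/F$ with $K/F$-unitary involution $\tau$. For type $^2\sfD_n$, $G$ is the connected component of $\gAut(A,\sigma,f)$, where $A$ is a central simple $F$-algebra of degree $2n$ carrying a quadratic pair of nontrivial discriminant $K/F$. Under this dictionary, (Out~1) becomes a cohomological condition on the Brauer class of $B$ (respectively on the Clifford invariant of $(A,\sigma,f)$), while (Out~2) and (Out~3) amount to the existence of a compatible $F$-involution with, in the latter case, an order-two commutation relation with $\tau$ (respectively $\sigma$).

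For Part~(1), in type $^2\sfA_n$ with $n$ even, the odd-degree hypothesis on $B$ reduces matters, via a standard index argument, to an explicit construction of an involutive outer automorphism once the Tits obstruction vanishes. The case $^2\sfA_5$ rests on the descent theorem for degree-$6$ exponent-$2$ algebras with unitary involution announced in the abstract: one must show that such a $(B,\tau)$ with fixed Tits class decomposes as a tensor product of quaternion algebras with involution that are already defined over $F$, from which an involutive outer automorphism is produced by hand. This descent is the main obstacle of the paper; the plan is to analyse the discriminant algebra of $(B,\tau)$ in detail and combine it with the known structure of exponent-$2$ algebras of degree~$6$ to extract the desired decomposition.

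For Part~(2), in the remaining types ($^2\sfA_n$ with $n$ odd, $n\geq 3$, $n\neq 5$, and $^2\sfD_n$ with $n\geq 3$) I would build explicit counterexamples over function fields by means of \emph{generic sums} of algebras with involution. The generic sum is designed so that prescribed invariants (in particular the fixedness of the Tits class) are preserved under the generic extension, while any candidate outer automorphism would produce incompatible residues along the discrete valuation naturally attached to the construction. Starting from a pair $(B_0,\tau_0)$ whose Tits class is already fixed but which carries no outer automorphism over its base field, one obtains the separation (Out~1)$\not\Rightarrow$(Out~2). A variant, in which $(B_0,\tau_0)$ is chosen to admit outer automorphisms none of which has order~$2$, yields the separation (Out~2)$\not\Rightarrow$(Out~3). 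The main technical hurdle on this side is to verify that generic sums truly preserve the relevant cohomological invariants while killing every involutive lift of the Dynkin symmetry; once this is in place, the constructions in types $^2\sfA$ and $^2\sfD$ run in parallel, differing only in the invariants (discriminant algebra for $^2\sfA$, discriminant together with Clifford algebra for $^2\sfD$) one must track through the generic construction.
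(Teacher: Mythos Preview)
Your outline is broadly in the right spirit, but there are two concrete points where it diverges from the paper in ways that matter.

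\medskip

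\textbf{The $^2\sfA_5$ descent.} You propose to attack Theorem~\ref{descent.thm} via the discriminant algebra of $(B,\tau)$. The paper does nothing of the sort: the argument (Proposition~\ref{descent.propbis}) is a short direct computation on the hermitian form. Writing $\tau=\ad_h$ for a hermitian form $h=\qf{q_1,\ldots,q_n}$ over $(Q,\theta)$ with $\theta=\ba\circ\iota_Q$, the point is that when $n\leq 3$ the $K$-span of $q_1,\ldots,q_n$ has positive codimension in $Q$, so the orthogonal complement for the norm form contains a nonzero $\theta$-skew element $q^{-1}$; twisting by $q$ lands all entries $qq_i$ in the fixed subalgebra $Q'_0$ of a modified $\iota$-semilinear automorphism, and the descent follows. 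Your discriminant-algebra route is not obviously wrong, but it is a substantially harder detour (the discriminant algebra has degree~$15$), and you give no indication of how to extract the decomposition from it.

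\medskip

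\textbf{The role of generic sums in Part~(2).} Your description is circular: you say one ``starts from a pair $(B_0,\tau_0)$ whose Tits class is already fixed but which carries no outer automorphism'' and then takes a generic sum. But that is precisely the object you are trying to construct. In the paper, generic sums are used the other way around: one takes a generic orthogonal sum of \emph{one-dimensional} skew-hermitian forms $\qf{t_1q_1,\ldots,t_nq_n}$ over $\hat Q$, and the valuation-theoretic machinery (Theorem~\ref{gensummainthm}, Proposition~\ref{key.prop}) reduces the existence of (square-central) improper similitudes to an explicit arithmetic condition on the $a_i=q_i^2$, namely nonemptiness of certain intersections $G_{\varepsilon_1}(a_1)\cap\cdots\cap G_{\varepsilon_n}(a_n)$. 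The counterexamples are then produced by choosing the $a_i$ over a suitable function field so that these intersections are empty (Examples~\ref{ex:2Devenn} and~\ref{noimproperlemma:ex}). For $^2\sfA_3$ the paper does not build unitary examples directly at all: it invokes the exceptional isomorphism $\sfD_3\simeq\sfA_3$ via the Clifford algebra (Remark~\ref{rem:A3D3}). For $^2\sfA_n$ with $n\geq7$ odd, a second generic construction is needed (\S\ref{sec:genunit}), mixing a hermitian and a skew-hermitian piece over a Laurent-series extension, and the analysis feeds back into the $\sfD_3$ example through Lemma~\ref{lem:propersimhath}. None of this is visible in your sketch.
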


In other words, assertion (2) says there are examples where the
condition on the Tits class is satisfied, and yet $G$ does not have
any outer automorphism, and examples where $G$ has an outer
automorphism, but no outer automorphism of order $2$. In particular,
this disproves Conjecture~1.1.2 in~\cite{GP}, and provides
  examples of simply connected absolutely simple algebraic group
  schemes $G$ for which the 
Galois cohomology sequence 
\[
H^1(F,C)\to H^1(F,G)\to H^1(F,\gAut(G))
\]
from~\cite[Thm 11(b)]{Gar} (where $C$ is the center of $G$) is not exact.

\medbreak

Every absolutely almost simple algebraic group scheme of adjoint type
$^2\sfA_n$ over $F$ is isomorphic to $\gPGU(B,\tau)=\gAut_K(B,\tau)$
for some central simple algebra $B$ of degree $n+1$ over a separable
quadratic field extension $K$ of $F$ with a $K/F$-unitary involution
$\tau$. As explained below in~\S\,\ref{unitary.sec}, condition (Out~1)
holds for the group $\gPGU(B,\tau)$ if and only if $B$ has exponent at
most~$2$, and condition (Out~3) holds if and only if $(B,\tau)$ has a
descent, i.e., $(B,\tau)=(B_0,\tau_0)\otimes_F(K,\iota)$ for some
central simple $F$-algebra with $F$-linear involution $(B_0,\tau_0)$.
For $n$ even, Theorem~\ref{main.thm}(1) can be reformulated in a more
precise form:

\begin{thm}
  \label{minor.thm}
  Let $(B,\tau)$ be a central simple algebra with unitary
  involution. If $\deg B$ is odd, then conditions (Out~1), (Out~2),
  and (Out~3) for $\gPGU(B,\tau)$ are equivalent and hold if and only
  if $B$ is split.
\end{thm}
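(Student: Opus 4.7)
The plan is as follows. The implications (Out~3) $\Rightarrow$ (Out~2) $\Rightarrow$ (Out~1) are immediate from the introduction, so it suffices to show that (Out~1) forces $B$ to be split, and that conversely $B$ split forces (Out~3); this will simultaneously yield the equivalence of the three conditions and identify the common equivalent condition as ``$B$ is split''.

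For (Out~1) $\Rightarrow$ $B$ split, I would invoke the translation recalled in~\S\,\ref{unitary.sec}: (Out~1) holds for $\gPGU(B,\tau)$ if and only if the exponent of the central simple $K$-algebra $B$ divides~$2$. When $\deg B$ is odd, the index of $B$ divides $\deg B$ but has the same prime divisors as the exponent, hence it is simultaneously odd and a power of~$2$, so it is equal to~$1$. Thus $B$ is split. The converse is trivial, since the Tits class of $\gPGU(B,\tau)$ vanishes whenever $B$ is split.

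For $B$ split $\Rightarrow$ (Out~3), I would write $(B,\tau)\cong(\End_K(V),\sigma_h)$ for some nondegenerate hermitian form $h$ on a $K$-vector space $V$ relative to $\iota$, and construct a descent directly. Since $h(v,v)\in F$ for every $v\in V$, standard orthogonalization yields a basis $(e_1,\dots,e_m)$ of $V$ diagonalizing $h$ as $\qf{a_1,\dots,a_m}$ with each $a_i\in F^\times$. Setting $V_0:=Fe_1\oplus\cdots\oplus Fe_m$, the same matrix defines a nondegenerate symmetric $F$-bilinear form $h_0$ on $V_0$, and under the canonical identification $V=V_0\otimes_F K$ the form $h$ corresponds to $h_0\otimes_F K$. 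Hence $(\End_F(V_0),\sigma_{h_0})\otimes_F(K,\iota)\cong(B,\tau)$, exhibiting an explicit $F$-descent of $(B,\tau)$, so (Out~3) follows from the characterization recalled in~\S\,\ref{unitary.sec}.

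There is no substantial obstacle in this argument: the translation of (Out~1) as an exponent condition and the diagonalizability of hermitian forms over a separable quadratic extension with $F^\times$-valued entries are both standard, and the descent of the involution is then immediate. The real difficulty of the paper is reserved for the even-degree case, where (Out~1) does not force $B$ to be split.
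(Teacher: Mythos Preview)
Your proposal is correct and follows essentially the same route as the paper: in Corollary~\ref{GPCor912.cor} the paper likewise uses Proposition~\ref{unitary.prop}(1) to translate (Out~1) into the condition that $B$ has exponent at most~$2$, notes that odd degree then forces $B$ to be split, and shows $B$ split $\Rightarrow$ (Out~3) by diagonalizing the hermitian form and descending along the resulting $F^\times$-valued diagonal basis. The only cosmetic difference is that you spell out the index--exponent argument explicitly, whereas the paper simply asserts that exponent $\leq2$ and odd degree imply $B$ split.
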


The proof is easy: see Corollary~\ref{GPCor912.cor}.
\smallbreak

Now, assume $G=\gPGU(B,\tau)$ has type $^2\sfA_5$, i.e., $B$ has
degree $6$. If the 
exponent of $B$ is at most~$2$, then its index is at most
$2$. Therefore, Theorem~\ref{main.thm}(1) for such groups follows from
the following descent theorem for algebras with unitary involution,
proved in \S\ref{subsec:unitsim}: 

\begin{thm}
\label{descent.thm}
Let $(B,\tau)$ be a central simple algebra of degree at most~$6$ and
index at most~$2$, with a $K/F$-unitary involution. There exists a central
simple algebra with orthogonal involution $(B_0,\tau_0)$ over $F$, of
the same index as $B$, such
that $(B,\tau)=(B_0,\tau_0)\otimes(K,\iota)$, where $\iota$ is the
unique nontrivial $F$-automorphism of $K$.  
\end{thm}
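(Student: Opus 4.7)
The plan is to handle the cases of increasing degree $n = \deg B$ separately, reducing via Morita equivalence to a Hermitian form descent problem. When $B$ is split---which covers all odd $n$ (since $\ind B$ divides $\gcd(2, n) = 1$) as well as the split subcases of even $n$---the involution $\tau$ is the adjoint of a rank $n$ Hermitian form over $(K, \iota)$. Any such form diagonalizes as $\qf{a_1, \ldots, a_n}$ with $a_i \in F^\times$, and $\bigl(M_n(F), \mathrm{adj}(\qf{a_1, \ldots, a_n})\bigr)$, equipped with the associated orthogonal involution, provides the desired descent.

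For $\ind B = 2$ (possible only when $n \in \{2, 4, 6\}$), Morita equivalence writes $(B, \tau) \simeq \bigl(\End_D(V), \mathrm{adj}(h)\bigr)$, with $D$ a quaternion $K$-algebra carrying a unitary involution $\theta$ and $h$ a Hermitian form of rank $n/2$ over $(D, \theta)$. A classical descent theorem for unitary involutions on quaternion algebras yields $(D, \theta) = (D_0, \theta_0) \otimes_F (K, \iota)$, with $D_0$ a quaternion $F$-division algebra and $\theta_0$ orthogonal. The task then reduces to descending $h$ to a Hermitian form of rank $n/2$ over $(D_0, \theta_0)$. For ranks $1$ and $2$ (degrees $n = 2, 4$) this follows from the classification of low-rank Hermitian forms by rank and discriminant, both of which are $F$-defined invariants of $h$.

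The main case is rank $3$ (degree $n = 6$). Diagonalizing $h \simeq \qf{u_1, u_2, u_3}$ with $u_i \in \Sym(D, \theta)^\times$, one must produce an isometric form with entries in $\Sym(D_0, \theta_0)$, an $F$-subspace of codimension $1$ in $\Sym(D, \theta)$; descent is therefore not a formal consequence of dimensions. The plan is to identify a sharp system of invariants for rank $3$ Hermitian forms over $(D, \theta)$---rank, discriminant in $F^\times / N_{K/F}(K^\times)$, and an appropriate degree $3$ cohomological invariant---and to verify that each invariant of $h$ lies in the image of restriction from $F$, whence a form $h_0$ over $(D_0, \theta_0)$ realizing these invariants descends $h$. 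The principal obstacle is this invariants calculus in rank $3$: one must both establish that the chosen invariants are sharp (i.e.\ classify isometry classes) and verify their $F$-rationality for an arbitrary $h$. The ``generic sum'' construction emphasized in the abstract can be deployed here to reduce the verification to a universal example, where the Galois descent is checked by a direct computation.
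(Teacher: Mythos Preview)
Your split case is fine and matches the paper. The index~$2$ case, however, has a genuine gap, and the paper's argument is both simpler and entirely different from what you outline.

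For the crucial rank~$3$ case you propose to classify hermitian forms over $(D,\theta)$ by ``rank, discriminant, and a degree~$3$ cohomological invariant,'' then check that the invariants of $h$ come from $F$. But you neither identify the degree~$3$ invariant nor prove that such a system is complete; you only say that one ``must establish'' this. That is the entire content of the theorem, so the proposal is not yet a proof. Your appeal to the ``generic sum'' construction is also a misreading of its role: in the paper generic sums are used to build \emph{counterexamples} (algebras for which descent fails in degrees~$\geq8$), not to verify descent in degree~$6$. They would not help you here. The rank~$1$ and~$2$ cases are also thinner than you suggest: ``classification by rank and discriminant'' for hermitian forms over a quaternion algebra with unitary involution is not a standard off-the-shelf fact, and you have not said what the discriminant is or why it classifies.

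The paper's proof avoids all of this with a short linear-algebra trick that treats all ranks $\leq3$ uniformly and never fixes a descent of $(D,\theta)$ in advance. Write $\tau=\ad_h$ for a hermitian form $h=\qf{q_1,\ldots,q_n}$ over $(Q,\theta)$ with $\theta=\ba\circ\iota_Q$ for some (arbitrary) descent $Q_0$ of $Q$. The $K$-span $S$ of $q_1,\ldots,q_n$ has dimension at most~$3$ (automatic when $n\leq3$), so its orthogonal $S^\perp$ for the norm form of $Q$ is nonzero; moreover $S^\perp$ is $\theta$-stable, so one can pick $q\in Q^\times$ with $q^{-1}\in S^\perp$ and $\theta(q)=-q$. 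Passing to the twisted involution $\theta'=\Int(q)\circ\theta$ and the corresponding new descent $Q_0'=Q^{\Int(q)\circ\iota_Q}$, one checks directly that $qq_i\in Q_0'$ for every $i$. Thus the scaled form $h'=qh=\qf{qq_1,\ldots,qq_n}$ already has all entries in $Q_0'$, and $(B,\tau)=(\End_{Q_0'}U_0',\ad_{h_0'})\otimes_F(K,\iota)$. The point you missed is that one should let the descent $Q_0'$ depend on the diagonalization rather than fix it first and then try to force $h$ to descend.
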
 

It also follows from this theorem that assertion~(1) does hold for
groups of type $^2\sfA_3$ when the underlying algebra $B$ has index
at most~$2$; but this does not apply to all groups of type $^2\sfA_3$,
since a degree $4$ central simple algebra of exponent $2$ can be of
index $4$.  
An example of a degree $4$ and exponent $2$ algebra with unitary
involution that does not have a descent will be provided in
\S\,\ref{Doddexample.section} below (see Remark~\ref{rem:A3D3}).  

As usual for classical groups, we use as a crucial tool their explicit
description in terms of algebras with involution or quadratic pair.
How conditions (Out~1), (Out~2) and (Out~3) translate into conditions on
these algebraic structures is explained in
\S\,\ref{translation.section}. Section~\ref{orthogonal.section}
studies in more details the $^2\sfD_n$ case. In particular, we
introduce our main tool for proving assertion~(2) of
Theorem~\ref{main.thm}, namely ``generic'' orthogonal sums of hermitian
forms or involutions. In \S\,\ref{sec:outunit}, using the same kind of
strategy, we prove Theorem~\ref{descent.thm}, and complete the proof of
Theorem~\ref{main.thm} by producing examples of outer type $^2\sfA_n$.

We refer the reader to~\cite{KMRT} for definitions and basic facts on
central simple algebras, involutions, and quadratic pairs.  Recall
that if $\charac F\not =2$, then for any quadratic pair $(\sigma,f)$,
$\sigma$ is an involution of orthogonal type, and $f$ is the map
defined on the set $\Sym(A,\sigma)$ of $\sigma$-symmetric elements by
$f(s)=\frac 12 \Trd_A(s)$. Hence the quadratic pair is uniquely
determined by the involution, and we usually write $(A,\sigma)$ for
$(A,\sigma,f)$ in this case.  

\subsection*{Notation}
If $\mathfrak A$ is a structure (such as an algebra with involution or
an algebraic group scheme) defined over a field $F$, we write
$\gAut(\mathfrak{A})$ for the algebraic group scheme of automorphisms
of $\mathfrak A$ and $\Aut(\mathfrak{A})$ for its (abstract) group of
rational points:
\[
\Aut(\mathfrak{A})=\gAut(\mathfrak{A})(F).
\]
We use a similar convention for classical groups; thus for instance if
$(B,\tau)$ is a central simple algebra with unitary involution over a
separable quadratic field extension $K$ of $F$, then 
\[
\gPGU(B,\tau)=\gAut_K(B,\tau) \qquad\text{and}\qquad
\PGU(B,\tau)=\gPGU(B,\tau)(F).
\] 

Note that an absolutely almost simple simply connected algebraic group
scheme and its isogenous adjoint group have the same automorphism
group, hence it is enough to consider adjoint groups.  For isogenous
groups that are neither adjoint nor simply connected, obstruction to
the existence of an outer automorphism can arise from the fundamental
group.

\section{Groups of type $\sfA$ and $\sfD$, and associated algebras
  with involution} 
\label{translation.section}

The main purpose of this section is to point out how conditions (Out~1),
(Out~2) and (Out~3) can be translated in terms of the corresponding
algebra with involution or quadratic pair. Part of
Theorem~\ref{main.thm} follows immediately, as we will
show. Throughout this section, $F$ is an arbitrary field.  

\subsection{Type $\sfA$}
\label{unitary.sec}

Let $K$ be an \'etale quadratic $F$-algebra, and $\iota$ be the
nontrivial $F$-automorphism of $K$. Consider a central simple
$K$-algebra with $K/F$-unitary involution $(B,\tau)$. We denote by
$({}^\iota B,{}^\iota\tau)$ the conjugate algebra with involution
defined by $^\iota B=\{{}^\iota x\mid x\in B\}$ with the operations
\[
{}^\iota x +{}^\iota y={}^\iota(x+y), \quad {}^\iota x\,{}^\iota
y={}^\iota(xy),\quad \lambda\,{}^\iota x={}^\iota(\iota(\lambda) x)
\quad\text{and}\quad
{}^\iota \tau({}^\iota x)={}^\iota(\tau(x))
\]
for $x$, $y\in B$ and $\lambda\in K$.

The following propositions were proven by Garibaldi--Petersson
\cite{GP}: 

\begin{prop}
  \label{unitary.prop}
  Let $G=\gPGU(B,\tau)$, with $\deg B\geq 3$.
  \begin{enumerate}
  \item Condition (Out~1) holds for $G$ if and only if $B$ has
    exponent at most $2$;
  \item Condition (Out~2) holds for $G$ if and only if $(B,\tau)$
    admits a $\iota$-semilinear automorphism, i.e., $(B,\tau)$ is
    isomorphic to $(^\iota B,{}^\iota \tau)$;
  \item Condition (Out~3) holds for $G$ if and only if $(B,\tau)$
    admits a $\iota$-semilinear automorphism of order $2$.
  \end{enumerate}
\end{prop}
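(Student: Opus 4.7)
The plan is to realise all three conditions in terms of the $F$-group scheme $\gAut(B,\tau)$ of automorphisms of $(B,\tau)$ as an $F$-algebra with involution, and match them against the corresponding data for $G=\gPGU(B,\tau)$.

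For parts~(2) and~(3), I would first observe that any $F$-algebra automorphism of the central simple $K$-algebra $B$ restricts to an $F$-automorphism of the centre $K$, equal to $\Id_K$ or to $\iota$. The $K$-linear ones form precisely $\gAut_K(B,\tau)=\gPGU(B,\tau)=G$, so there is an exact sequence of $F$-group schemes
\[
1\to G\to \gAut(B,\tau)\to \mathrm{Gal}(K/F)\to 1.
\]
Standard descent for classical groups (cf.\ \cite[(26.15)]{KMRT}) realises $\gAut(B,\tau)$ as a closed subgroup of $\gAut(G)$, through which the projection above is identified with the map $\gAut(G)\to\gAut(\Delta)$ and the canonical identification $\gAut(\Delta)(F)=\mathrm{Gal}(K/F)$. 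Consequently, (Out~2) is equivalent to the existence of an $F$-algebra with involution automorphism of $(B,\tau)$ whose restriction to $K$ is $\iota$, i.e.\ a $\iota$-semilinear automorphism; (Out~3) adds the requirement that it have order~$2$. Via the bijection $\phi\mapsto ({}^\iota(\cdot)\circ\phi)$, such automorphisms correspond to $K$-algebra with involution isomorphisms $(B,\tau)\to({}^\iota B,{}^\iota\tau)$, preserving order, which proves~(2) and~(3).

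For part~(1), I would compute the action of $\gAut(\Delta)(F)=\mathrm{Gal}(K/F)$ on $H^2(F,C)$, where $C$ is the centre of the simply connected cover $\gSU(B,\tau)$. The group $C$ sits as the kernel of the norm map $R_{K/F}(\mu_{n+1,K})\to\mu_{n+1}$, and its character module $\mathbb{Z}/(n+1)$ carries the sign action of $\mathrm{Gal}(K/F)$. Restriction to $K$ yields a $\mathrm{Gal}(K/F)$-equivariant injection $H^2(F,C)\hookrightarrow H^2(K,\mu_{n+1})=\mathrm{Br}(K)[n+1]$, through which the nontrivial element of $\gAut(\Delta)(F)$ acts as $[A]\mapsto [A^{\op}]=-[A]$. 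Since the Tits class $t_G$ of $\gSU(B,\tau)$ is sent to the Brauer class $[B]$ by this injection (see \cite[\S31]{KMRT}), (Out~1) is equivalent to $[B]=-[B]$ in $\mathrm{Br}(K)$, i.e.\ $B$ has exponent dividing~$2$.

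The main obstacle will be the compatibility of the inclusion $\gAut(B,\tau)\hookrightarrow\gAut(G)$ with the diagram-automorphism quotient, together with the identification of $t_G$ with the Brauer class $[B]$ inside $\mathrm{Br}(K)$; both facts are documented in~\cite{KMRT} but deserve care, especially regarding the centre $C$ and its character module in the outer type. Once these identifications are in place, the three equivalences reduce to routine bookkeeping with the $K/F$-semilinear structure on $B$.
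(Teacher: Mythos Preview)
Your argument is correct and follows essentially the same route as the paper: identify $\Aut(G)$ with the $F$-automorphisms of $(B,\tau)$ for parts~(2)--(3), and translate invariance of the Tits class into an exponent condition for part~(1). Two small points: the reference you want in \cite{KMRT} is (26.9)--(26.10), not (26.15) (which treats type~$\sfD$); and for (2)--(3) you need $\gAut(B,\tau)\to\gAut(G)$ to be an \emph{isomorphism} (at least on $F$-points), not merely a closed embedding---this is exactly what the groupoid equivalence of \cite[(26.9)]{KMRT} gives, and the paper invokes it directly. For~(1), the paper's argument is slightly more concrete than yours: instead of computing the character module of $C$, it observes that the Galois action on Tits algebras sends $B$ to ${}^\iota B$, and that $\tau$ itself is an anti-isomorphism $B\to{}^\iota B$, so $B\cong{}^\iota B$ if and only if $B\cong B^{\op}$.
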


\begin{prop}
  \label{descent1.prop}
  Condition (Out~3) holds for $\gPGU(B,\tau)$ if and only if
  $(B,\tau)$ has a descent, i.e., there exists a central simple
  $F$-algebra with $F$-linear involution $(B_0,\tau_0)$ such that
  $(B,\tau)\simeq (B_0,\tau_0)\otimes (K,\iota)$.
\end{prop}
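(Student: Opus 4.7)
The approach is to invoke Proposition~\ref{unitary.prop}(3), which identifies (Out~3) with the existence of a $\iota$-semilinear automorphism $\alpha$ of $(B,\tau)$ of order~$2$. With this reformulation in hand, the statement becomes a Galois-descent assertion for the \'etale quadratic extension $K/F$.

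One direction is immediate: if $(B,\tau)\simeq(B_0,\tau_0)\otimes_F(K,\iota)$, then $\alpha:=\Id_{B_0}\otimes\iota$ is an $F$-algebra automorphism of $B$ that is $\iota$-semilinear over $K$, satisfies $\alpha^2=\Id$, and commutes with $\tau_0\otimes\iota=\tau$; hence $\alpha$ is the $\iota$-semilinear involutive automorphism required by Proposition~\ref{unitary.prop}(3).

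For the converse, starting from a $\iota$-semilinear involutive automorphism $\alpha$ of $(B,\tau)$, set
\[
B_0=\{b\in B\mid\alpha(b)=b\},\qquad\tau_0=\tau|_{B_0}.
\]
Since $\alpha$ is an $F$-algebra automorphism, $B_0$ is an $F$-subalgebra; since $\alpha$ and $\tau$ commute, $\tau(B_0)\subseteq B_0$, and the restriction $\tau_0$ is $F$-linear because the $\iota$-semilinearity of $\tau$ only involves the $K$-action, which is trivial on $F$. The pair $(\alpha,\iota)$ furnishes Galois descent data on $B$ for $K/F$, so Galois descent, or equivalently the vanishing of $H^1(\mathrm{Gal}(K/F),\mathrm{GL}_{n+1})$, yields an isomorphism of $K$-algebras with involution
\[
(B_0,\tau_0)\otimes_F(K,\iota)\xrightarrow{\sim}(B,\tau),\qquad b\otimes\lambda\mapsto\lambda b.
\]
A dimension count then gives $\deg_FB_0=\deg_KB$, and any element of the centre of $B_0$ lies in the centre $K$ of $B$ and is $\alpha$-fixed, hence lies in $K^\iota=F$; thus $B_0$ is central simple over~$F$.

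The only delicate point is to handle uniformly the two cases for the \'etale algebra $K$. When $K$ is a separable quadratic field extension, the argument is classical Galois descent for a cyclic group of order~$2$. When $K\simeq F\times F$, one has $B\simeq A\times A^{\op}$ with $\tau$ the exchange involution, and a $\iota$-semilinear involutive $\alpha$ amounts to an $F$-linear involution on $A$, which directly supplies the desired descent. I expect this case analysis, together with the verification that the cocycle condition on the descent data is automatic from $\alpha^2=\Id$, to be the main, though largely routine, technical step.
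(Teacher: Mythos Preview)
Your argument is correct and matches the paper's: both invoke Proposition~\ref{unitary.prop}(3), construct $\Id_{B_0}\otimes\iota$ in one direction, and in the other take the fixed subalgebra $B_0=B^\alpha$ with $\tau_0=\tau\rvert_{B_0}$, observing that $B_0$ is central simple over $F$ and $B_0\otimes_FK\simeq B$. The paper is simply terser about the descent step; one small slip in your write-up is that the relevant Hilbert~90 set is $H^1(\mathrm{Gal}(K/F),\mathrm{GL}_{(n+1)^2})$ rather than $\mathrm{GL}_{n+1}$, since $\dim_KB=(n+1)^2$.
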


\begin{proof}[Proof of Proposition~\ref{unitary.prop}]
  Those assertions are taken from~\cite[\S\,9]{GP}; for the reader's
  convenience, we briefly sketch an argument.  One may understand the
  action of $\gAut(\Delta)(F)$ on the Tits class by looking at the
  action on the Tits algebras. For groups of type $\sfA$, the symmetry
  of the diagram, together with the description of the Tits algebras
  given in~\cite[\S\,27.B]{KMRT}, shows that $t_{\gPGU(B,\tau)}$ is
  invariant under the action of $\gAut(\Delta)(F)$ if and only if $B$ is
  invariant under the action of the Galois group of $K/F$, i.e., if
  $B$ is isomorphic to its conjugate $^\iota B$. Since $\tau$ is a
  semilinear involution, it induces an anti-automorphism between $B$
  and $^\iota B$. Therefore, $B$ and $^\iota B$ are isomorphic if and
  only if $B$ is isomorphic to its opposite algebra, i.e., $B$ has
  exponent at most $2$.

  Recall from \cite[(26.9)]{KMRT} that there is an equivalence of
  categories between the groupoid $\sfA_n(F)$ of central simple
  algebras of degree~$n+1$ with a unitary involution over some \'etale
  quadratic $F$-algebra and the groupoid $\overline{\sfA}^n(F)$ of
  adjoint absolutely almost simple linear algebraic groups of type 
  $\sfA_n$ defined over $F$, under which $(B,\tau)$ maps to the adjoint
  group $\gPGU(B,\tau)$.  Hence, $\gPGU(B,\tau)$ and
  $(B,\tau)$ have the same automorphisms. More precisely, the
  automorphisms of $\gPGU(B,\tau)$ defined over $F$ coincide with the
  $F$-automorphisms of $(B,\tau)$, see~\cite[(26.10)]{KMRT}. Among
  those, the inner automorphisms are the $K$-linear automorphisms of
  $(B,\tau)$, while outer automorphisms coincide with
  $\iota$-semilinear automorphisms of $(B,\tau)$. Therefore,
  $\gPGU(B,\tau)$ admits an outer automorphism if and only if
  $(B,\tau)$ is isomorphic to $(^\iota B, {}^\iota \tau)$. Note that the
  condition $\deg B\geq 3$ is crucial here. Indeed, if $B=Q$ is a
  quaternion algebra, $\gPGU(Q,\tau)$ has no outer automorphism, while
  $(Q,\tau)$ does admit semilinear automorphisms. 
\end{proof}

\begin{proof}[Proof of Proposition~\ref{descent1.prop}]
  If $(B,\tau)\simeq (B_0,\tau_0)\otimes (K,\iota)$, then
  $\Id_{B_0}\otimes \iota$ is a semilinear automorphism of $B$ which
  commutes with $\tau=\tau_0\otimes \iota$, and has order
  $2$. Therefore, it induces an outer automorphism of $\gPGU(B,\tau)$
  of order $2$. Conversely, assume $(B,\tau)$ has a $\iota$-semilinear
  automorphism $\varphi$ of order~$2$. The $F$-algebra of fixed points
  $B_0=B^\varphi$ is a central simple $F$-algebra of the same degree
  as $B$, hence 
  \[
  B=B_0\otimes_FK.
  \]
  Moreover, since $\varphi$
  commutes with $\tau$, the restriction of $\tau$ induces an
  $F$-linear involution $\tau_0$ of $B_0$, and we have
  $(B,\tau)=(B_0,\tau_0)\otimes_F(K,\iota)$ as required.
\end{proof}

\begin{remarkk}
\label{innerA.rem}
If $G$ has inner type $^1\sfA_n$, then $K\simeq F\times F$ and the
corresponding algebra with involution $(B,\tau)$ is isomorphic 
to $(E\times E^\op, \varepsilon)$ for some central simple
$F$-algebra $E$, with $\varepsilon$ 
the exchange involution (see~\cite[(2.14)]{KMRT}). If condition
(Out~1) holds, then $E$ has exponent at most~$2$, hence by a theorem
of Albert (see \cite[(3.1)]{KMRT}) $E$ carries an $F$-linear
involution $\gamma$. Identifying $E\otimes_F (F\times F)$ with
$E\times E$, one may check that the map $(x,y)\in E\times E \mapsto
(x,\gamma(y)^\op)\in E\times E^\op$ induces an
isomorphism between $(E,\gamma)\otimes_F (F\times F, \iota)$ and
$(E\times E^\op, \varepsilon)$. Therefore $(E\times
E^\op, \varepsilon)$ has a descent, provided $E$ has
exponent at most $2$. This shows that conditions (Out~1), (Out~2) and
(Out~3) are equivalent for groups of inner type $^1\sfA_n$, as
observed by Garibaldi~\cite[Ex.~17(i)]{Gar}. Moreover,
these conditions hold if and only if $G=\gPGU(E\times E^\op,
\varepsilon)=\gPGL(E)$ with $E$ of exponent at most $2$. If $n$ is
even, then $E$ has odd degree $n+1$, and the conditions hold if and
only if $E$ is split. 
\end{remarkk} 

Combining Proposition~\ref{unitary.prop} and~\ref{descent1.prop} we
already get Theorem~\ref{minor.thm}. More precisely, we have  

\begin{cor}
  \label{GPCor912.cor}
  Let $G=\gPGU(B,\tau)$ with $\deg B\geq 3$.
  \begin{enumerate}
  \item If $B$ is split, then $G$ admits outer automorphims of order
    $2$.
  \item If $G$ has type $^2\sfA_n$, with $n$ even, conditions (Out~1),
    (Out~2) and (Out~3) are equivalent, and hold if and only if $B$ is
    split.
  \end{enumerate}
\end{cor}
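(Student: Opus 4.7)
My approach is to combine Propositions~\ref{unitary.prop} and~\ref{descent1.prop} with the standard fact that every hermitian form over $(K,\iota)$ admits a diagonalization with entries in the base field~$F$.

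For part~(1), since $B$ is split I will represent $(B,\tau)$ as $(\End_K(V),\tau_h)$, where $\tau_h$ is the involution adjoint to some hermitian form $h$ on an $(n+1)$-dimensional $K$-vector space $V$. I will then diagonalize $h\simeq\qf{a_1,\dots,a_{n+1}}$ with $a_i\in F^\times$, which is legitimate because the diagonal entries of a hermitian form are $\iota$-invariant and thus lie in $F$. Setting $(B_0,\tau_0)=(M_{n+1}(F),\tau_q)$, where $\tau_q$ is the orthogonal involution adjoint to the symmetric bilinear form $\qf{a_1,\dots,a_{n+1}}$ over $F$, the canonical isomorphism $M_{n+1}(F)\otimes_F K\simeq M_{n+1}(K)$ carries $\tau_q\otimes\iota$ to $\tau_h$, so $(B,\tau)\simeq(B_0,\tau_0)\otimes_F(K,\iota)$. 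This exhibits a descent, and Proposition~\ref{descent1.prop} yields condition~(Out~3).

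For part~(2), the crucial observation is that when $n$ is even, $\deg B=n+1$ is odd, so the exponent of $B$, which divides its degree, is also odd. Hence the condition ``$B$ has exponent at most $2$'' appearing in Proposition~\ref{unitary.prop}(1) collapses to ``$B$ is split''. Combining this equivalence with part~(1), which provides the implication $B$ split $\Rightarrow$ (Out~3), together with the trivial chain (Out~3)$\Rightarrow$(Out~2)$\Rightarrow$(Out~1), I obtain the equivalence of the three conditions and their equivalence with the splitness of $B$.

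I anticipate no real obstacle: the statement is essentially a consequence of the two preceding propositions. The only point meriting care is the identification $\tau_q\otimes\iota\simeq\tau_h$ under the tensor decomposition, which is routine but should be checked by comparing Gram matrices after extension of scalars.
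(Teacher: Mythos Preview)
Your proposal is correct and follows essentially the same approach as the paper: for part~(1) the paper also diagonalizes the hermitian form, observes that the diagonal entries lie in $F^\times$, and restricts to the $F$-span of the diagonalizing basis to obtain a descent; for part~(2) the paper likewise uses that odd degree together with exponent at most~$2$ forces $B$ to be split, and then applies part~(1).
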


\begin{proof}
  If $B$ is split, we may assume $B= \End_KV$ for some $K$-vector
  space $V$. Then $\tau$ is the adjoint
  involution with respect to some nondegenerate hermitian form
  $h\colon V\times V\rightarrow K$.  Pick a diagonalization of $h$,
  corresponding to a $K$-basis $(e_i)_{1\leq i\leq n}$ of $V$. For all
  $i$, we have $h(e_i,e_i)\in F^\times$, hence $h$ restricts to a
  nondegenerate 
  symmetric bilinear form $b$ on the $F$-vector space
  $V_0=e_1F+\dots +e_nF$. Therefore,
  $(B,\tau)=(\End_FV_0,\ad_b)\otimes_F(K,\iota)$ has a descent, so
  (Out~3) holds for $\gPGU(B,\tau)$.

  Now, assume that $G$ has type $^2\sfA_n$ for some $n\geq 3$, with $n$
  even. Then $G=\gPGU(B,\tau)$, where $B$ has odd degree $n+1$. Hence,
  under condition (Out~1), $B$ is split, so (Out~3) holds by the first
  assertion, and this concludes the proof.
\end{proof}

Corollary~\ref{GPCor912.cor} was proved by Garibaldi--Petersson,
see~\cite[Cor 9.1.2]{GP}. 
\smallbreak

To prove Theorem~\ref{main.thm}(2), we will give
in \S\,\ref{orthogonal.section} and \S\,\ref{sec:outunit} examples of
algebras with unitary involutions $(B,\tau)$ such that either $B$ has
exponent $2$ and $(B,\tau)$ is not isomorphic to its conjugate
$(^\iota B,{}^\iota \tau)$, or $(B,\tau)$ and $(^\iota B,{}^\iota \tau)$
are isomorphic, yet $(B,\tau)$ does not have a descent. We provide
examples of degree $4$ and index $4$, and examples of degree  
$n+1$ and index $2$ for all odd $n\geq 7$; see
Remark~\ref{rem:A3D3} and \S\,\ref{sec:outunitex}. 

\subsection{Type $\sfD$}
\label{translation-orthogonal.section}

Let $A$ be a central simple $F$-algebra of even degree, and let
$(\sigma,f)$ be a quadratic pair on $A$. We write $\GO(A,\sigma,f)$
for the (abstract) group of similitudes of $(A,\sigma,f)$, defined as
\[
\GO(A,\sigma,f)=\{g\in A^\times\mid \sigma(g)g\in F^\times \text{ and
} f\circ\Int(g)=f\}.
\]
The scalar $\mu(g)=\sigma(g)g$ is called the \emph{multiplier} of $g$. 
Mapping $g\in\GO(A,\sigma,f)$ to $\Int(g)$ yields an identification
of $\GO(A,\sigma,f)/F^\times$ with the group of rational points
$\PGO(A,\sigma,f)=\Aut(A,\sigma,f)$. Every automorphism of
$(A,\sigma,f)$ induces an automorphism of the Clifford algebra
$C(A,\sigma,f)$. A similitude is said to be \emph{proper} if the
induced automorphism of $C(A,\sigma,f)$ is the identity on the center $Z$;
otherwise it is said to be \emph{improper}.
The proper similitudes form a subgroup $\GO^+(A,\sigma,f)$ which
satisfies $\GO^+(A,\sigma,f)/F^\times=\PGO^+(A,\sigma,f)$ for
$\gPGO^+(A,\sigma,f)$ the connected component of the identity in
$\gPGO(A,\sigma,f)=\gAut(A,\sigma,f)$. 

If $A=\End_FV$ for some $F$-vector space $V$, then every quadratic
pair $(\sigma,f)$ on $A$ is adjoint to some nonsingular quadratic form
$q$ on $V$, see \cite[(5.11)]{KMRT}. In that case, we write
simply $\GO(V,q)$, $\gPGO(V,q)$, etc. for $\GO(A,\sigma,f)$,
$\gPGO(A,\sigma, f)$, etc.

\begin{prop}
  \label{orthogonal.prop}
  Let $G=\gPGO^+(A,\sigma,f)$, with $\deg A=2n\geq 4$,
  and let $Z$ be the discriminant quadratic $F$-algebra of
  $(\sigma,f)$, i.e., $Z$ is the center of the Clifford algebra
  $C(A,\sigma,f)$. Assume $Z$ is a field.
  \begin{enumerate}
  \item Condition (Out~1) holds for $G$ if and only if $A$ is split by
    $Z$;
  \item Condition (Out~2) holds for $G$ if and only if $(A,\sigma,f)$
    admits improper similitudes;
  \item Condition (Out~3) holds for $G$ if and only if $(A,\sigma,f)$
    admits square-central improper similitudes.
  \end{enumerate}
\end{prop}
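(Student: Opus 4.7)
The plan is to translate each of (Out~1), (Out~2), (Out~3) into a statement about the algebra with quadratic pair via the standard dictionary between $G=\gPGO^+(A,\sigma,f)$ and $(A,\sigma,f)$. Outside triality, the crucial input is that $\gAut(\gPGO^+(A,\sigma,f))=\gPGO(A,\sigma,f)$, so that the exact sequence \eqref{eq:exact} for $G$ becomes
\[
1\to \gPGO^+(A,\sigma,f)\to \gPGO(A,\sigma,f) \to \gAut(\Delta)\to 1,
\]
where the quotient $\mathbb{Z}/2\mathbb{Z}$ is identified with $\gAut(\Delta)$ through its action on the center $Z$ of the Clifford algebra; on rational points this quotient map is exactly the one that distinguishes proper from improper similitudes, as recalled above.

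Parts (2) and (3) will follow formally from this sequence. For (2), condition (Out~2) asks that $\PGO(A,\sigma,f)\to \gAut(\Delta)(F)$ be surjective, and since the nontrivial coset consists of the classes of improper similitudes, this is equivalent to the existence of an improper similitude defined over $F$. For (3), an outer automorphism of $G$ of order $2$ amounts to a section $\gAut(\Delta)(F)\to\PGO(A,\sigma,f)$ of the same quotient map; choosing a lift in $\GO(A,\sigma,f)$, the order-$2$ condition in $\PGO(A,\sigma,f)$ translates into $g^2\in F^\times$, i.e.\ into square-centrality of $g$. Conversely, any square-central improper similitude $g$ yields such a splitting via $\Int(g)$.

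For (1), I would read off the action of $\gAut(\Delta)(F)$ on the Tits class from its action on the Tits algebras, following \cite[\S27.B]{KMRT}. The Tits algebras of $\gPGO^+(A,\sigma,f)$ recover, up to Brauer equivalence, the class of $A$ over $F$ (from the natural representation) and the class of the Clifford algebra $C(A,\sigma,f)$ as a central simple $Z$-algebra (from the two half-spin representations). The nontrivial element of $\gAut(\Delta)(F)$ permutes the two half-spin weights, so it acts on the Clifford component by the nontrivial $F$-automorphism $\iota$ of $Z$, while leaving the class of $A$ untouched. Using the fundamental relations between $A_Z$ and $C(A,\sigma,f)$ in the Brauer group of $Z$ recorded in \cite[\S9]{KMRT}, Galois-invariance of $[C(A,\sigma,f)]$ under $\iota$ reduces to the vanishing of $[A_Z]$, that is, to $A$ being split by $Z$.

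The main obstacle will be in (1): the center $C$ of the simply connected cover $\gSpin(A,\sigma,f)$ has genuinely different structure depending on the parity of $n$ (a twisted form of $\mu_2\times\mu_2$ for $n$ even, a twisted form of $\mu_4$ for $n$ odd), so the action of $\gAut(\Delta)$ on $H^2(F,C)$ must be handled separately in the two cases and matched carefully with the corresponding Brauer-class identities from \cite{KMRT}. Parts (2) and (3), by contrast, are essentially formal consequences of the displayed sequence and of the description of $\PGO(A,\sigma,f)$ in terms of similitudes.
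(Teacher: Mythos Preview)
Your plan for part~(1) is exactly the paper's: identify the $\gAut(\Delta)(F)$-action on the Tits class with the $\iota$-action on the Brauer class of $C(A,\sigma,f)$ in $\mathrm{Br}(Z)$, then use the fundamental relations \cite[(9.12)]{KMRT} (with the even/odd parity split you anticipate) to show $[C]\otimes[{}^\iota C]^{-1}\sim[A_Z]$ in each case. So (1) is fine.

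The gap is in (2) and~(3). You take as input the identity $\gAut(\gPGO^+(A,\sigma,f))=\gPGO(A,\sigma,f)$, but for $n=4$ this is false as an identity of group schemes: over an algebraic closure $\gAut(\Delta)\cong\mathfrak{S}_3$, so $\gAut(G)$ has three components while $\gPGO(A,\sigma,f)$ has two. Your caveat ``outside triality'' does not save this, since the proposition covers $\deg A=8$ with $Z$ a field, i.e.\ type $^2\sfD_4$. What you actually need, and what suffices, is the equality on $F$-points $\Aut(G)=\PGO(A,\sigma,f)$; this is plausible because $Z$ being a field forces $\gAut(\Delta)(F)\cong\Z/2\Z$. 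But it still needs proof: the groupoid equivalence \cite[(26.15)]{KMRT} between central simple algebras with quadratic pair and adjoint groups of type $\sfD_n$ does \emph{not} cover $n=4$ in that form (the $\sfD_4$ groupoid is described via trialitarian data), so you cannot simply read off that $F$-automorphisms of $G$ are $F$-automorphisms of $(A,\sigma,f)$.

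The paper confronts this head-on. It first proves a separate lemma: for a hyperbolic quadratic space $(V,q)$ of dimension $\geq4$ over an infinite field, the map $\PGO(V,q)\to\Aut(\gPGO^+(V,q))$, $gE^\times\mapsto\Int(g)$, is injective. This gives injectivity of $\Phi\colon\gPGO(A,\sigma,f)\to\gAut(G)$ as a morphism of group schemes (via \cite[(22.2)]{KMRT}). Comparing the two exact rows
\[
\xymatrix{
1\ar[r]&\PGO^+(A,\sigma,f)\ar[r]\ar@{=}[d]&\PGO(A,\sigma,f)\ar[r]\ar[d]^{\Phi_F}&\Z/2\Z\ar@{=}[d]\\
1\ar[r]&\PGO^+(A,\sigma,f)\ar[r]&\Aut(G)\ar[r]&\Z/2\Z
}
\]
(using $\gAut(\Delta)(F)\cong\gAut_F(Z)\cong\Z/2\Z$ since $Z$ is a field) then forces $\Phi_F$ to be an isomorphism. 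Once that is established, your deduction of (2) and (3) from the sequence is correct and agrees with the paper's. So the missing ingredient is not the formal endgame but the justification of $\Aut(G)=\PGO(A,\sigma,f)$ in degree~$8$.
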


In particular, condition (Out~1) holds if and only if the algebra $A$
is Brauer-equivalent to a quaternion algebra split by $Z$. This
condition is necessary for the existence of an improper similitude by
the generalization of Dieudonn\'e's theorem on multipliers of
similitudes given in~\cite[(13.38)]{KMRT}.

\begin{proof}
  (1): Let $\iota$ denote the nontrivial
  $F$-automorphism of $Z$, and let $C=C(A,\sigma,f)$. The Tits class
  $t_G$ is invariant under the 
  action of $\Aut(\Delta)$ if and only if $C$ is
  isomorphic to its conjugate algebra $^\iota C$, or equivalently
  $C\otimes_Z {\,}^\iota C^\op$ is split.  Recall from \cite[(9.12)]{KMRT}
  the fundamental relations between $A$ and $C$: if $n$ is even, then
  $C\otimes_ZC$ is split and the corestriction $\Cor_{Z/F}C$ is
  Brauer-equivalent to $A$. After scalar extension to $Z$, it follows
  from the latter relation that the $Z$-algebra $A_Z$ is
  Brauer-equivalent to $C\otimes_Z{}^\iota C$. If $n$ is odd, then
  $C\otimes_ZC$ is Brauer-equivalent to $A_Z$, while $\Cor_{Z/F}C$ is
  split, hence $C\otimes_Z{}^\iota C$ is split. Thus, in each case
  $A_Z$ is Brauer-equivalent to $C\otimes_Z{}^\iota C^\op$, and we get
  that (Out~1) holds for $G$ if and only if $A_Z$ is split.
\smallbreak

  (2) and (3): If $\deg A\neq8$, we may argue along the same lines as for
  Proposition~\ref{unitary.prop}, using the equivalence of categories
  between the 
  groupoid $\sfD_n(F)$ of central simple $F$-algebras of degree~$2n$
  with quadratic pair and the groupoid $\overline{\sfD}^n(F)$ of
  adjoint
  absolutely almost simple groups of type $\sfD_n$, which maps the
  algebra $A$ with quadratic pair $(\sigma,f)$ to
  $\gPGO^+(A,\sigma,f)$, see~\cite[(26.15)]{KMRT}. This line of
  argument does not apply to the case where $\deg A=8$, however,
  because the description of $\sfD_4(F)$ is different (see
  \cite{KMRT}). Therefore, we give a different proof, which applies in
  all cases where $\deg A=2n\geq4$.

  We will need the following lemma, which is probably well-known:
\renewcommand{\qed}{\relax}
\end{proof}

\begin{lemma}
  \label{PGOhyp.lem}
  Let $(V,q)$ be a hyperbolic space of dimension $2n\geq4$ over an
  infinite field $E$. The map
  $\PGO(V,q)\to\Aut\bigl(\gPGO^+(V,q)\bigr)$ which carries $gE^\times$
  to $\Int(g)$ is injective.
\end{lemma}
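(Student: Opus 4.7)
The plan is to show that the kernel of the map $gE^\times\mapsto\Int(g)$ is trivial by treating proper and improper similitudes $g\in\GO(V,q)$ separately. Suppose $\Int(g)=\Id$ as an automorphism of $\gPGO^+(V,q)$; the goal is $g\in E^\times$. If $g$ is proper, then $gE^\times\in\PGO^+(V,q)\subset\gPGO^+(V,q)(\bar E)$, and $\Int(g)=\Id$ forces $g$ to centralize $\gPGO^+(V,q)$ over $\bar E$ (using Zariski density of $\PGO^+(V,q)$ in $\gPGO^+(V,q)$, valid because $E$ is infinite). Since $\gPGO^+(V,q)$ is adjoint semisimple, its center is trivial and hence $g\in E^\times$.

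If instead $g$ is improper, I would derive a contradiction from its action on maximal totally isotropic subspaces of $V$. Since $(V,q)$ is hyperbolic of dimension $2n\geq4$, such subspaces are defined over $E$ and split into two disjoint families $\mathcal{F}_1$, $\mathcal{F}_2$; proper similitudes preserve each family while improper ones interchange them, as one reads off the induced action on the center of the Clifford algebra of $(V,q)$. Pick $W\in\mathcal{F}_1$ defined over $E$ and let $P_W\subset\gPGO^+(V,q)$ denote its stabilizer, a maximal parabolic subgroup scheme. A direct check gives $gP_Wg^{-1}=P_{gW}$, so the vanishing of $\Int(g)$ forces $P_W=P_{gW}$. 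But $W$ can be recovered from $P_W$ as the subspace of $V$ fixed by the unipotent radical of $P_W$ (which works for $n\geq 2$), so $W=gW$, contradicting $W\in\mathcal{F}_1$ and $gW\in\mathcal{F}_2$.

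The main technical point will be making rigorous the dictionary $W\leftrightarrow P_W$: verifying that $W$ is indeed reconstructed from $P_W$ via the unipotent radical, and that improper similitudes really swap the two families (the latter being the standard Clifford-algebra criterion for propriety). The case $n=2$, where $\gPGO^+(V,q)$ is isogenous to $\gPGL_2\times\gPGL_2$ with the two families corresponding to the two factors, is handled uniformly by the same argument, and characteristic~$2$ causes no additional trouble once one works throughout with the quadratic pair $(\sigma,f)$ associated to $q$.
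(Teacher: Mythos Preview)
Your argument is correct and takes a genuinely different route from the paper's.

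The paper argues by constructing an explicit regular semisimple element: in a symplectic basis $e_1,f_1,\dots,e_n,f_n$ it picks a diagonal proper isometry $a$ with eigenvalues $\alpha_1,\alpha_1^{-1},\dots,\alpha_n,\alpha_n^{-1}$ chosen pairwise distinct (and, in odd characteristic, not a negative of one another). If $\Int(g)=\Id$ then $g^{-1}ag=\pm a$, and an eigenvector chase forces $g$ to be diagonal in this basis; one then checks directly that a diagonal similitude is proper, and concludes via injectivity of $\gPGO^+\hookrightarrow\gAut(\gPGO^+)$. So the paper never separates proper from improper \emph{a priori}; instead it shows every $g$ in the kernel is forced to be proper by an explicit eigenvalue computation.

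Your approach is more structural: the proper case is immediate from the triviality of the center of an adjoint group, and the improper case is ruled out by the action on the two $\gPGO^+$-orbits of maximal isotropic subspaces. This is cleaner and requires no clever choice of element, at the cost of invoking more background (the proper/improper dichotomy via the Clifford algebra, and the bijection between maximal isotropics and certain maximal parabolics). The paper's proof, by contrast, is entirely elementary linear algebra once the element $a$ is written down.

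One small technical point to tighten: $\gPGO^+(V,q)$ does not act on $V$, so the phrase ``the subspace of $V$ fixed by the unipotent radical of $P_W$'' needs interpretation. The cleanest fix is to lift the unipotent radical isomorphically to $\Orth^+(V,q)$ (the center $\gGm$ lies in the Levi, not in the radical) and compute the fixed space there; alternatively, simply observe that $P_W$ and $P_{gW}$ are maximal parabolics of different types---they correspond to the two distinct end nodes of the $\sfD_n$ diagram (or to the two $\sfA_1$ factors when $n=2$)---and hence cannot coincide. Either route closes the argument without difficulty.
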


\begin{proof}
  Let $b$ be the polar bilinear form of $q$, and let $e_1$, $f_1$,
  \ldots, $e_n$, $f_n$ be a symplectic base of $(V,q)$, i.e., a base
  such that $q(e_i)=q(f_i)=0$ and
  \[
  b(e_i,f_i)=1,\quad b(e_i,e_j)=b(e_i,f_j)=b(f_i,f_j)=0 \text{ for
    all $i$, $j=1$, \ldots, $n$ with $i\neq j$.}
  \]
  Since $E$ is infinite, we may find $\alpha_1$, \ldots, $\alpha_n\in
  E^\times$ such that $\alpha_1$, $\alpha_1^{-1}$, \ldots, $\alpha_n$,
  $\alpha_n^{-1}$ are pairwise distinct and moreover, if $\charac
  E\neq2$,
  \[
  \{\alpha_1,\alpha_1^{-1}, \ldots, \alpha_n, \alpha_n^{-1}\} \neq
  \{-\alpha_1,-\alpha_1^{-1}, \ldots, -\alpha_n,-\alpha_n^{-1}\}.
  \]
  Consider the proper isometry $a\in\GO^+(V,q)$ defined by
  \[
  a(e_i)=\alpha_ie_i\qquad\text{and}\qquad a(f_i)=\alpha_i^{-1}f_i
  \quad\text{for $i=1$, \ldots, $n$.}
  \]
  Let $g\in\GO(V,q)$ be such that $\Int(g)$ is the identity on
  $\PGO^+(V,q)$. Then $g^{-1}ag=\lambda a$ for some $\lambda\in
  E^\times$. Because $g^{-1}ag$ and $a$ are isometries, we must have
  $\lambda=\pm1$. Moreover, by evaluating $ag=\lambda ga$ on $e_1$,
  \ldots, $f_n$, we obtain
  \[
  ag(e_i)=\lambda\alpha_ig(e_i)\qquad\text{and}\qquad
  ag(f_i)=\lambda\alpha_i^{-1} g(f_i) \qquad\text{for $i=1$, \ldots,
    $n$.}
  \]
  Thus, $g(e_i)$ (resp.\ $g(f_i)$) is an eigenvector of $a$ with
  eigenvalue $\lambda \alpha_i$ (resp.\ $\lambda\alpha_i^{-1}$). But
  the eigenvalues of $a$ are $\alpha_1$, $\alpha_1^{-1}$, \ldots,
  $\alpha_n$, $\alpha_n^{-1}$, hence
  \[
  \{\lambda\alpha_1,\lambda\alpha_1^{-1}, \ldots, \lambda\alpha_n,
  \lambda\alpha_n^{-1}\} =
  \{\alpha_1,\alpha_1^{-1}, \ldots, \alpha_n, \alpha_n^{-1}\}
  \]
  with $\lambda=\pm1$. By the choice of $\alpha_1$, \ldots, $\alpha_n$
  we must have $\lambda=1$, hence $g(e_i)$ must be a scalar multiple
  of $e_i$ and $g(f_i)$ a scalar multiple of $f_i$. Therefore, there
  exist $\gamma_1$, \ldots, $\gamma_n\in E^\times$ such that, letting
  $\mu=\mu(g)$ be the multiplier of $g$,
  \[
  g(e_i)=\gamma_ie_i\qquad\text{and}\qquad g(f_i)=\mu\gamma_i^{-1}f_i.
  \]
  Thus, the matrix of $g$ with respect to the base $e_1$, \ldots,
  $f_n$ is diagonal. Using \cite[(12.24)]{KMRT} if $\charac E\neq2$
  and \cite[(12.12)]{KMRT} if $\charac E=2$, it is then easy to check
  that $g$ is a proper similitude. Since the map of algebraic group
  schemes $\gPGO^+(V,q)\to\gAut(\gPGO^+(V,q))$ is injective
  (cf.~\eqref{eq:exact}), it follows that the homomorphism
  $\PGO^+(V,q)\to\Aut(\gPGO^+(V,q))$ is injective, hence $g\in E^\times$.
\end{proof}

\begin{proof}[Proof of Proposition~\ref{orthogonal.prop}(2) and (3)]
  The map $g\mapsto\Int(g)$ induces a map of algebraic group schemes
  $\Phi$ which fits in the following commutative diagram with exact
  rows:
  \[
  \xymatrix{
  1\ar[r]&\gPGO^+(A,\sigma,f) \ar[r] \ar@{=}[d] & \gPGO(A,\sigma, f)
  \ar[r] \ar[d]_{\Phi}&\gAut_F(Z)\ar[d]^{\Psi}\ar[r]&1\\
  1\ar[r]&\gPGO^+(A,\sigma,f)\ar[r]&\gAut(\gPGO^+(A,\sigma,f)) \ar[r]&
  \gAut(\Delta)\ar[r]&1}
  \]
  The differential $d\Phi$ is injective, since the restriction of
  $\Phi$ to the connected component of the identity
  $\gPGO^+(A,\sigma,f)$ is the identity map. Moreover,
  Lemma~\ref{PGOhyp.lem} shows that over an algebraic closure $F_\alg$
  the map
  \[
  \Phi_\alg\colon\gPGO(A,\sigma,f)(F_\alg) \to
  \gAut(\gPGO^+(A,\sigma,f))(F_\alg)
  \]
  is injective. It follows by \cite[(22.2)]{KMRT} that $\Phi$ is
  injective, and likewise $\Psi$ is injective. We have
  $\gAut(\Delta)\simeq \gAut_F(Z)$ if $n\neq4$, and
  $\gAut(\Delta)\simeq \gAut_F(F\times Z)$ if $n=4$. Since $Z$ is
  assumed to be a field, in each case the group of $F$-rational points
  is
  \[
  \Aut(\Delta)\simeq \Z/2\Z\simeq \Aut_F(Z).
  \]
  Therefore, the diagram above yields the following diagram with exact
  rows:
  \[
  \xymatrix{
  1\ar[r]&\PGO^+(A,\sigma,f) \ar[r] \ar@{=}[d] & \PGO(A,\sigma, f)
  \ar[r] \ar[d]_{\Phi_F}&\Z/2\Z\ar@{=}[d]\\
  1\ar[r]&\PGO^+(A,\sigma,f)\ar[r]&\Aut(\gPGO^+(A,\sigma,f)) \ar[r]&
  \Z/2\Z}
  \]
  It follows that $\Phi_F$ is an isomorphism, which proves~(2) and (3)
  of Proposition~\ref{orthogonal.prop}.
\end{proof}

\begin{remarkk}
  \label{orthogonal.rem}
  (i) If the algebra $A$ is split, which means that
  $\gPGO^+(A,\sigma,f)=\gPGO^+(V,q)$ for some quadratic space $(V,q)$,
  then (Out~3) holds, since each quadratic space admits improper
  isometries of order~$2$.
  
  (ii) The arguments in the proof of
  Proposition~\ref{orthogonal.prop} also apply in the case
  where $Z\simeq F\times F$. It follows that in this case
    (Out~1), (Out~2), and (Out~3) are equivalent, and hold if and only
    if $A$ is split. Thus, adjoint groups of inner type $\sfD_n$
  admit outer automorphisms of order~$2$ whenever the Tits class
  obstruction vanishes, as pointed out by Garibaldi~\cite{Gar}.
\end{remarkk}

In the outer case, condition (Out~3) induces additional restrictions
on the algebra $A$ when its degree is divisible by $4$, as we now
proceed to show:   

\begin{lemma}
  \label{out3impliesnodd}
  Let $G=\gPGO^+(A,\sigma,f)$ for some $F$-algebra with quadratic pair
  $(A,\sigma,f)$, such that $\deg A\equiv 0\mod 4$, so $G$ has
  type $\sfD_n$ with $n$ even.  If $G$ admits an outer automorphism of
  order $2$, then $A$ is split.
\end{lemma}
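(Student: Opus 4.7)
The plan is to exploit the Clifford algebra $C=C(A,\sigma,f)$, whose center is the quadratic field $Z$, together with the fundamental relation $\Cor_{Z/F}[C]=[A]$ in the Brauer group, valid for $\deg A=2n$ with $n$ even (see \cite[(9.12)]{KMRT}), and the fact that for such $n$ the canonical involution $\underline\sigma$ of $C$ is $Z$-linear, hence a fortiori $F$-linear.

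By Proposition~\ref{orthogonal.prop}(3), the hypothesis produces a square-central improper similitude $g\in\GO(A,\sigma,f)$. First I would invoke the functoriality of the Clifford construction to lift $\Int(g)$ to an $F$-algebra automorphism $\psi$ of $C$ that commutes with $\underline\sigma$. Improperness of $g$ means precisely that $\psi$ acts as the nontrivial automorphism $\iota$ on $Z$, so $\psi$ is $\iota$-semilinear; and $g^2\in F^\times$ gives $\Int(g)^2=\Id$, whence $\psi^2=\Id$.

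Next I would repeat the Galois descent argument from the proof of Proposition~\ref{descent1.prop}: the fixed $F$-subalgebra $C_0=C^\psi$ is a central simple $F$-algebra with $C=C_0\otimes_F Z$, and because $\psi$ commutes with $\underline\sigma$, the restriction $\underline\sigma|_{C_0}$ is an $F$-linear involution of $C_0$. Its mere existence forces the exponent of $C_0$ to divide~$2$.

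Finally, the projection formula for the corestriction yields $\Cor_{Z/F}[C_0\otimes_F Z]=2[C_0]$, so
\[
[A]=\Cor_{Z/F}[C]=2[C_0]=0,
\]
and $A$ is split. The main technical point is really the first step: one must check carefully that the Clifford functor provides a $\iota$-semilinear involutive automorphism of $(C,\underline\sigma)$ from the square-central improper similitude (noting that the $F$-linearity of $\underline\sigma$ is specific to $n$ even, which is exactly our hypothesis). Once that is in hand, descent together with the Brauer-class computation gives the conclusion formally.
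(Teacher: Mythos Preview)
Your argument is essentially the paper's own: obtain a square-central improper similitude via Proposition~\ref{orthogonal.prop}(3), pass to the induced order-$2$ $\iota$-semilinear automorphism of $(C,\underline\sigma)$, descend to $(C_0,\underline\sigma|_{C_0})$ over $F$, and use $[A]=\Cor_{Z/F}[C]=2[C_0]=0$. Two small points: you should first dispose of the case $Z\simeq F\times F$ (the paper does this via Remark~\ref{orthogonal.rem}(ii)), since Proposition~\ref{orthogonal.prop} is stated only for $Z$ a field; and your parenthetical that ``the $F$-linearity of $\underline\sigma$ is specific to $n$ even'' is not quite right---$\underline\sigma$ is always $F$-linear (what is specific to $n$ even is $Z$-linearity), and the parity of $n$ enters only through the fundamental relation $\Cor_{Z/F}[C]=[A]$, which you already singled out.
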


\begin{proof}
  In view of Remark~\ref{orthogonal.rem}, it suffices to consider the
  case where the center $Z$ of the Clifford algebra $C=C(A,\sigma,f)$
  is a field.  By Proposition~\ref{orthogonal.prop}, if $G$ admits an
  outer automorphism of order $2$, then $(A,\sigma,f)$ admits a
  square-central improper similitude $g$. As explained
  in~\cite[\S\,13.A]{KMRT}, $g$ induces an automorphism $C(g)$ of
  order $2$ of $C$, which commutes with the canonical involution
  $\underline \sigma$. Moreover, since $g$ is improper, $C(g)$ acts
  non trivially on $Z$. Therefore, the fixed points $C^{C(g)}$ form an
  $F$-algebra $C_0$ of the same degree as $C$, and we have
  $C\simeq C_0\otimes_F Z$. Since $C(g)$ commutes with the canonical
  involution $\underline{\sigma}$ of the Clifford algebra,
  $\underline \sigma$ restricts to an $F$-linear involution on $C_0$,
  so $C_0$ has exponent at most $2$. In view of the fundamental
  relations~\cite[(9.12)]{KMRT}, we get that $A$ is Brauer-equivalent
  to $\Cor_{Z/F}(C_0\otimes _F Z)\simeq C_0\otimes C_0\sim 0$, hence
  $A$ is split, as required.
\end{proof}

To prove Theorem~\ref{main.thm}, we will construct in
\S~\ref{Dnexamples.section} below examples of algebras with quadratic
pairs such that either $A$ is split by the discriminant quadratic
extension, yet $(A,\sigma,f)$ does not admit improper similitudes, or
$(A,\sigma,f)$ admits improper similitudes, but no improper
similitudes of order $2$.  We provide examples of degree $2n$ for
arbitrary $n\geq 3$. The index of $A$ is $2$, as required by condition
(Out~1). 

\section{Outer automorphisms and similitudes: the orthogonal case}
\label{orthogonal.section}

Throughout this section, we assume that the base field $F$ has
characteristic different from~$2$. Hence, we consider orthogonal
involutions instead of quadratic pairs. Our goal is to produce
examples of groups of type $^2\sfD_n$, for all $n\geq 3$, for which
(Out~1) holds and (Out~2) fails, or (Out~2) holds and (Out~3)
fails. Before describing the explicit examples, we first recall a few
well-known facts on similitudes of hermitian forms, and we introduce
our main tool in this section, namely ``generic'' sums of hermitian
forms.

By Proposition~\ref{orthogonal.prop}(1), if
$\gPGO^+(A,\sigma)$ satisfies (Out~1), then $A$ is split by the
discriminant quadratic algebra $Z$. In particular, $A$ has index at
most $2$. Moreover, Remark~\ref{orthogonal.rem} shows that we may
assume $A$ is not split. Hence, our main case of interest is when
$A=M_n(Q)$ for some quaternion division algebra $Q$ over $F$. However,
our discussion of generic sums is more general, because we think this
tool could be useful in various other contexts.

\subsection{Similitudes of hermitian forms} 

Let $D$ be a central division $F$-algebra. Assume $D$ carries an
$F$-linear involution $\rho$, and let $\delta=\pm1$. 
Let $(V,h)$ be a $\delta$-hermitian space over $(D,\rho)$. By
definition, an element $g\in \End_DV$ is a similitude of 
$(V,h)$ with multiplier $\mu(g)=\mu$ if
\[
h\bigl(g(x),g(y)\bigr)=\mu\,h(x,y)\qquad\text{for all $x$, $y\in V$.}
\]
We write $\Sim(V,h)$ or $\Sim(h)$ for the group of similitudes of
$(V,h)$, which is also the group of similitudes of $\End_DV$ for the
adjoint involution $\ad_h$. Depending on $\delta$ and the type of the
reference involution $\rho$, this group is a form of an orthogonal or
a symplectic group: 
\[
\Sim(V,h)=
\begin{cases}
  \GO(\End_DV,\ad_h)&\text{if $\ad_h$ is orthogonal},\\
  \GSp(\End_DV,\ad_h)&\text{if $\ad_h$ is symplectic.}
\end{cases}
\]
For the rest of this subsection, let $A=\End_DV$ and $\deg A=2n$, and
suppose $\ad_h$ is orthogonal; this 
case occurs if and only if $\delta=1$ and $\rho$ is orthogonal, or
$\delta=-1$ and $\rho$ is symplectic, see \cite[(4.2)]{KMRT}.
Since $\charac F\neq2$, we may distinguish as follows
between proper and improper similitudes: for
$g\in\Sim(V,h)$, 
taking the reduced norm of each side of the 
equation $\mu(g)=\sigma(g)g$, we see that $\mu(g)^{2n}=\Nrd_A(g)^2$,
hence $\Nrd_A(g)=\pm\mu(g)^{n}$. The similitude $g$ is 
proper if $\Nrd_A(g)=\mu(g)^n$, and improper if
$\Nrd_A(g)=-\mu(g)^n$ (see \cite[(12.24)]{KMRT}).

Suppose now 
$V=V_1\perp\ldots\perp V_r$ for some subspaces $V_1$, \ldots,
$V_r\subset V$, hence $h$ restricts to a nonsingular
$\delta$-hermitian form $h_i$ on each $V_i$. For $i=1$, \ldots, $r$,
let $A_i=\End_DV_i$,
pick $g_i\in A_i$, and let $g=g_1\oplus\cdots\oplus g_r\in A$ be the
map defined by
\[
g(x_1+\cdots+x_r)=g_1(x_1)+\cdots+g_r(x_r) \qquad\text{for $x_1\in
  V_1$, \ldots, $x_r\in V_r$.}
\]

\begin{lemma}
  \label{lem:propsum}
  With the notation above, $g$ is a similitude of $h$ with multiplier
  $\mu$ if and only if each $g_i$ is a similitude of $h_i$ with
  multiplier $\mu$. When this condition holds, the similitude $g$ is
  proper if and only if the number of improper similitudes among
  $g_1$, \ldots, $g_r$ is even.
\end{lemma}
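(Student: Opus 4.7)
The plan is to verify the similitude condition by a direct expansion using the orthogonal decomposition, and then to compute the reduced norm of the block-diagonal element $g$ via multiplicativity, extracting the proper/improper parity from the resulting sign.

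For the first assertion, I would write $x=x_1+\cdots+x_r$ and $y=y_1+\cdots+y_r$ with $x_i,y_i\in V_i$ and use orthogonality: since the $V_i$ are pairwise $h$-orthogonal and $g_i(x_i),g_i(y_i)\in V_i$, we get
\[
h\bigl(g(x),g(y)\bigr)=\sum_{i=1}^r h_i\bigl(g_i(x_i),g_i(y_i)\bigr),\qquad h(x,y)=\sum_{i=1}^r h_i(x_i,y_i).
\]
The implication from each $g_i$ being a $\mu$-similitude to $g$ being one follows by summing these identities. Conversely, if $g$ is a $\mu$-similitude, specializing $x,y$ to $V_i$ (all other components zero) collapses both sides to their $i$-th summand and yields the similitude identity for $g_i$.

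For the second assertion, the key input is that the natural inclusion $A_1\times\cdots\times A_r\hookrightarrow A$ sending $(g_1,\ldots,g_r)$ to $g_1\oplus\cdots\oplus g_r$ is an $F$-algebra embedding under which the reduced norm splits as a product,
\[
\Nrd_A(g_1\oplus\cdots\oplus g_r)=\prod_{i=1}^r \Nrd_{A_i}(g_i).
\]
This identity is standard and reduces, after scalar extension to a splitting field of $D$, to the determinant identity for block-diagonal matrices. Writing $\deg A_i=2n_i$ so that $n_1+\cdots+n_r=n$, and setting $\varepsilon_i=\pm1$ by $\Nrd_{A_i}(g_i)=\varepsilon_i\mu^{n_i}$ (with $\varepsilon_i=1$ exactly when $g_i$ is proper, per the sign convention recalled just above the lemma), multiplicativity gives $\Nrd_A(g)=\bigl(\prod_i\varepsilon_i\bigr)\mu^n$. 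Hence $g$ is proper if and only if $\prod_i\varepsilon_i=1$, i.e., if and only if an even number of the $g_i$ are improper. No step presents a serious obstacle; the only point worth double-checking is the reduced-norm identity for the block-diagonal embedding, which is routine via the splitting-field argument.
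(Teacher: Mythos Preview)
Your proposal is correct and follows essentially the same approach as the paper's proof: the paper dispatches the first assertion in one line by the same orthogonality reduction, and for the second it likewise sets $\deg A_i=2n_i$, writes $\Nrd_{A_i}(g_i)=\varepsilon_i\mu^{n_i}$, and uses $\Nrd_A(g)=\prod_i\Nrd_{A_i}(g_i)$ to conclude. Your write-up is slightly more explicit (spelling out the block-diagonal reduced-norm identity via a splitting field), but there is no substantive difference.
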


\begin{proof}
  The first part is clear since $h\bigl(g(x),g(y)\bigr)=\mu\, h(x,y)$
  for all $x$, $y\in V$ if and only if $h_i\bigl(g_i(x),g_i(y)\bigr) =
  \mu\, h_i(x,y)$ for all $i$, and all $x$, $y\in V_i$. To prove the
  second part, let 
  $\deg A_i=2n_i$ for $i=1$, \ldots, $r$, hence $n=n_1+\cdots+n_r$,
  and suppose $\Nrd_{A_i}(g_i)=\varepsilon_i\mu^{n_i}$ with
  $\varepsilon_i=\pm1$. We 
  then have
  \[
  \Nrd_A(g)=\prod_{i=1}^r\Nrd_{A_i}(g_i) =
  \bigl(\prod_{i=1}^r\varepsilon_i\bigr) \mu^{n_1+\cdots+n_r}.
  \qedhere
  \]
\end{proof}

We next consider the particular case where $D$ is a quaternion
division algebra $Q$ and $\rho$ is the canonical involution $\ba$,
hence $\delta=-1$. The generalization of Dieudonn\'e's theorem on
multipliers of similitudes \cite[(13.38)]{KMRT} then allows to
distinguish between proper and improper similitudes as follows: a
similitude $g$ of $(V,h)$ is proper if the quaternion algebra
$\bigl(Z,\mu(g)\bigr)_F$ is split (we write simply
$\bigl(Z,\mu(g)\bigr)_F=0$ in this case), and improper if it is
isomorphic to $Q$. For $1$-dimensional skew-hermitian forms, we
have the following more precise result:

\begin{lemma}
  \label{lem:sim1dim}
  Let $q$ be a nonzero pure quaternion in a quaternion division
  algebra $Q$, and let $a=q^2\in F^\times$. Define
  \[
  G_+(a)= \{\mu\in F^\times\mid (a,\mu)_F=0\}
  \quad\text{and}\quad
  G_-(a)=\{\mu\in F^\times\mid (a,\mu)_F= Q\}.
  \]
  Then $G_+(a)$ is the group of multipliers of proper similitudes of
  the skew-hermitian form $\qf{q}$, and $G_-(a)$ is the coset of
  multipliers of improper similitudes of $\qf{q}$. Moreover, the
  improper similitudes of $\qf{q}$ are all square-central.
\end{lemma}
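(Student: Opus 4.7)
The plan is to work in a symbol presentation of $Q$. Since $Q$ is a division algebra and $q^2=a$, the subfield $K:=F(q)$ is a quadratic extension of $F$; pick $j\in Q$ anticommuting with $q$ and put $\beta:=j^2\in F^\times$, so that $Q=(a,\beta)_F$ and $Q=K\oplus Kj$. Identify $V$ with $Q$ as a right $Q$-module and $A=\End_QV$ with $Q$ acting by left multiplication; then $h(x,y)=\bar xqy$, and $g\in Q$ is a similitude with multiplier $\mu$ if and only if $\bar gqg=\mu q$.

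Write $g=x+yj$ with $x,y\in K$, so that $\bar g=\bar x-yj$. Using $jq=-qj$ and $j\lambda=\bar\lambda j$ for $\lambda\in K$, a direct computation gives
\[
\bar gqg=\bigl[N_{K/F}(x)+\beta N_{K/F}(y)\bigr]q+2\bar xy\,qj.
\]
Since $\{q,qj\}$ is $K$-linearly independent in $Q$, the similitude condition splits into $\mu=N_{K/F}(x)+\beta N_{K/F}(y)$ together with $\bar xy=0$; as $K$ is a field and $\charac F\neq2$, the second equation forces $x=0$ or $y=0$.

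If $y=0$, then $g=x\in K^\times$, $\mu=N_{K/F}(x)$ and $\Nrd_Q(g)=x\bar x=\mu$, so $g$ is proper (by the sign criterion recalled in \S\,3.1). If $x=0$, then $g=yj$ with $y\in K^\times$, $\mu=\beta N_{K/F}(y)$ and $\Nrd_Q(g)=-y\bar y\,j^2=-\mu$, so $g$ is improper; moreover $g^2=y\bar y\,j^2=\mu\in F^\times$, which proves square-centrality. Hence the proper multipliers form $N_{K/F}(K^\times)$ and the improper multipliers form the coset $\beta N_{K/F}(K^\times)$. The standard norm criterion for quaternion algebras identifies $N_{K/F}(K^\times)$ with $\{\mu\in F^\times:(a,\mu)_F=0\}=G_+(a)$; and since $(a,\beta)_F=Q$ with $(a,\mu)_F=Q$ equivalent to $(a,\mu\beta^{-1})_F=0$, the coset $\beta N_{K/F}(K^\times)$ coincides with $G_-(a)$, completing the proof.

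The only real obstacle is carrying out the multiplication in $Q=K\oplus Kj$ carefully enough to isolate the off-diagonal coefficient $2\bar xy$; once this is in hand, the similitude condition immediately forces $g$ to lie either entirely in $K$ or entirely in $Kj$, and the rest reduces to elementary norm theory in the quadratic extension $K/F$.
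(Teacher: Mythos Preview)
Your proof is correct and follows essentially the same route as the paper: the paper simply cites \cite[(12.18)]{KMRT} for the fact that the proper similitudes of $\qf{q}$ are exactly $F(q)^\times$ and the improper ones are the $u\in Q^\times$ with $uq=-qu$, whereas you recover this description by an explicit computation in the decomposition $Q=K\oplus Kj$. The identification of the multiplier sets with $G_+(a)$ and $G_-(a)$ via the norm criterion is the same in both cases, so your argument is a self-contained version of the paper's proof rather than a genuinely different approach.
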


\begin{proof}
  The lemma follows from the explicit description of similitudes of
  $\qf{q}$ given in \cite[(12.18)]{KMRT}: the proper similitudes form
  the multiplicative group $F(q)^\times\subset Q^\times$, while the
  improper similitudes are the elements $u\in Q^\times$ such that
  $uq=-qu$. 
\end{proof}

In the case where each $V_i$ is
$1$-dimensional, Lemma~\ref{lem:propsum} yields:

\begin{lemma}
  \label{diagonalsimilitude}
  Let $q_1$, \dots, $q_n$ be pure quaternions in $Q$, consider the
  skew-hermitian form $h$ over $(Q,\ba)$ defined by
  $h=\qf{q_1,\dots, q_n}$, and let $a_i=q_i^2\in F^\times$. If
  $\mu\in F^\times$ satisfies $(\mu,a_i)_F\in\{0,Q\}$ for all $i$,
  then $(V,h)$ admits a similitude with multiplier $\mu$. Moreover,
  this similitude is proper if and only if the number of pure
  quaternions among $q_1$, \dots, $q_n$ satisfying $(\mu,a_i)_F=Q$ is
  even.
\end{lemma}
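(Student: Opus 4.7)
The plan is to deduce Lemma~\ref{diagonalsimilitude} by combining the two preceding lemmas applied to the orthogonal decomposition of $(V,h)$ into its $n$ one-dimensional summands.

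First I would write $V = V_1 \perp \cdots \perp V_n$ where $V_i$ is the one-dimensional skew-hermitian space carrying the restriction $\qf{q_i}$ of $h$. For each $i$, the hypothesis $(\mu, a_i)_F \in \{0, Q\}$ means exactly that $\mu$ lies in $G_+(a_i) \cup G_-(a_i)$, so Lemma~\ref{lem:sim1dim} produces a similitude $g_i$ of $\qf{q_i}$ with multiplier $\mu$. Moreover that lemma identifies $g_i$ as proper when $(\mu, a_i)_F = 0$ and as improper when $(\mu, a_i)_F = Q$.

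Then I would assemble $g = g_1 \oplus \cdots \oplus g_n \in \End_Q V$ and invoke Lemma~\ref{lem:propsum}: since each $g_i$ has multiplier $\mu$, the direct sum $g$ is a similitude of $h$ with multiplier $\mu$, and $g$ is proper precisely when an even number of the $g_i$'s are improper. Translating this parity condition back through the identification in the previous step gives the desired criterion: $g$ is proper if and only if the cardinality of $\{i \mid (\mu, a_i)_F = Q\}$ is even.

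There is no real obstacle here; the lemma is essentially a formal consequence of Lemmas~\ref{lem:propsum} and~\ref{lem:sim1dim}, so the only thing to check is that the ``$\ad_h$ is orthogonal'' hypothesis needed for Lemma~\ref{lem:propsum} is satisfied. This is automatic in our setting: $\rho = \ba$ is symplectic and $\delta = -1$, which is one of the two cases listed in \cite[(4.2)]{KMRT} where $\ad_h$ is orthogonal, so the properness/improperness dichotomy via $\Nrd$ and $\mu^{n_i}$ used in the proof of Lemma~\ref{lem:propsum} indeed applies.
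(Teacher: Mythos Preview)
Your proposal is correct and follows essentially the same approach as the paper: apply Lemma~\ref{lem:sim1dim} to each one-dimensional summand to obtain the $g_i$, form $g=g_1\oplus\cdots\oplus g_n$, and invoke Lemma~\ref{lem:propsum} for both the existence and the parity criterion. Your additional remark verifying the orthogonality hypothesis for $\ad_h$ is a nice explicit check that the paper leaves implicit.
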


\begin{proof}
  From the condition on $\mu$, it follows by Lemma~\ref{lem:sim1dim}
  that each $\qf{q_i}$ admits a similitude $g_i$ with multiplier
  $\mu$. Then $g=g_1\oplus\cdots\oplus g_n$ is a similitude of $h$
  with multiplier $\mu$. Lemma~\ref{lem:propsum} shows that this
  similitude is proper if and only if the number of indices $i$ such
  that $(\mu,a_i)_F=Q$ is even.
\end{proof} 

Of course, most similitudes do not act diagonally, and the multipliers
of similitudes of $(V,h)$ need not satisfy the condition given in the
lemma; nevertheless, as we explain in the next section, this condition
actually characterizes multipliers of similitudes for some particular
involutions, which we call ``generic sums of orthogonal involutions.''

\subsection{Generic sums}
\label{sec:genericsum}

Let $D$ be a central division algebra over an arbitrary field $F$ of
characteristic different from~$2$. Assume $D$ carries an involution
$\rho$ of the first kind, let $\delta=\pm1$, and let $(V_1,h_1)$,
\ldots, $(V_n,h_n)$ be 
$\delta$-hermitian spaces over $(D,\rho)$. 
Consider the field of iterated
Laurent series in $n$ indeterminates
\[
\widehat F = F((t_1))\ldots((t_n)),
\]
and let
\[
\widehat D = D\otimes_F\widehat F 
\quad\text{and}\quad
\widehat V_i=V_i\otimes_F\widehat F
\text{ for $i=1$, \ldots, $n$.}
\]
The involution $\rho$ extends to an involution
$\widehat\rho= \rho\otimes\Id_{\widehat F}$ on $\widehat D$. We also
extend $h_i$ to a $\delta$-hermitian form $\widehat h_i$ on $\widehat
V_i$, and we let
\[
(\widehat V,\widehat h) = 
(\hat V_1\oplus\cdots\oplus \hat V_n, 
\qf{t_1}\hat h_1\perp \ldots \perp \qf{t_n}\hat h_n).
\]
The adjoint involution $\ad_{\hat h}$ is an orthogonal sum, in the
sense of Dejaiffe~\cite{Dej}, of the involutions $\ad_{\hat h_i}$; we
call it a ``generic orthogonal sum'' since each $\hat h_i$ is extended
from an involution $h_i$ defined over $F$, and scaled by some
indeterminate $t_i$.  
We assume throughout that $h_1$, \ldots, $h_n$ are anisotropic, hence
$\hat h$ is anisotropic. Our goal is to relate the multipliers of
similitudes of $(\hat V,\hat h)$ to the multipliers of similitudes of
$(V_1,h_1)$, \ldots, $(V_n,h_n)$, with the help of a norm on the
vector space $\hat V$, i.e., a valuation-like map for which $\hat V$
contains a splitting base (see \cite[\S2]{RTW}). 
More precisely, we prove:
 
\begin{thm}
  \label{gensummainthm}
  Let $(\hat V,\hat h)$ be a ``generic sum'' of $\delta$-hermitian
  spaces 
  $(V_i,h_i)$ for $1\leq i\leq n$, as defined above.
  \begin{enumerate}
  \item If $n\geq3$, every similitude $g\in\Sim(\hat V, \hat h)$ has
    the form $g=\lambda g'$ for some $\lambda\in \hat F^\times$ and
    some similitude $g'$ with multiplier in $F^\times$.
  \item For every similitude $g\in\Sim(\hat V,\hat h)$ such that
    $\mu(g)\in F^\times\subset\hat F^\times$, there exist similitudes
    $g_i\in\Sim(V_i,h_i)$ for $i=1$, \dots, $n$ with
    $\mu(g)=\mu(g_1)=\cdots=\mu(g_n)$.
  \end{enumerate}
\end{thm}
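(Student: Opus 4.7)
The plan is to exploit the theory of norms on hermitian spaces over valued division algebras developed in~\cite{RTW}. Let $v$ denote the iterated Henselian valuation on $\hat F = F((t_1))\cdots((t_n))$ with value group $\Gamma = \Z^n$, $v(t_i) = e_i$, and residue field $F$, extended to $\hat D$ so that $\gr(\hat D) = D[\Gamma]$ is a graded division algebra. First I would define a norm $\alpha \colon \hat V \to \tfrac12 \Gamma \cup \{\infty\}$ adapted to the given decomposition by declaring every $D$-basis vector of $V_i \subset \hat V_i$ to have $\alpha$-value $e_i/2$ and extending in the usual way. Since $e_1/2, \dots, e_n/2$ represent pairwise distinct classes in $\tfrac12\Gamma/\Gamma \simeq (\Z/2)^n$, the associated graded space $\gr(\hat V)$ splits canonically as a direct sum of graded $\gr(\hat D)$-submodules whose degree-$e_i/2$ components are canonically identified with $V_i$, and $\gr(\hat h)$ restricts on each $V_i$ to a unit multiple of $h_i$. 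The central input from~\cite{RTW} that I would invoke is that on an anisotropic hermitian space the compatible norm is unique up to translation, whence every similitude $g$ with multiplier $\mu$ transports it by $\alpha \circ g = \alpha + v(\mu)/2$.

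Granted this, Part~(2) follows at once: when $\mu(g) \in F^\times$ one has $v(\mu(g)) = 0$, so $g$ preserves $\alpha$ and induces a degree-$0$ automorphism $\bar g$ of the graded $\gr(\hat D)$-module $\gr(\hat V)$; such a map must preserve each summand $V_i$, giving $D$-linear automorphisms $g_i \colon V_i \to V_i$ for which the descent of the similitude relation reads $h_i(g_i x, g_i y) = \mu(g)\,h_i(x,y)$. For Part~(1) with $n \geq 3$, the transport of $\alpha$ forces translation by $v(\mu)/2$ to permute the $n$ classes $\{e_i/2 + \Gamma\}_{i=1}^n$ inside $\tfrac12\Gamma/\Gamma$, producing a permutation $\pi \in S_n$ with $v(\mu) \equiv e_i + e_{\pi(i)} \pmod{2\Gamma}$ for every $i$. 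Working in $(\Z/2)^n = \Gamma/2\Gamma$, the element $c := e_i + e_{\pi(i)}$ is independent of $i$: if $\pi$ has a fixed point then $c = 0$, forcing $\pi = \mathrm{id}$; otherwise the support of $c$ has size at most $2$ but must contain every index $i$, contradicting $n \geq 3$. Hence $v(\mu) \in 2\Gamma$. Writing $v(\mu) = 2\gamma$, the unit $\mu/t^{2\gamma}$ has residue $a \in F^\times$ and equals $a(1+\eta)$ with $v(\eta) > 0$; Hensel's lemma furnishes $\lambda_0 \in \hat F^\times$ with $\lambda_0^2 = 1+\eta$, and the similitude $g' := (t^\gamma \lambda_0)^{-1} g$ has multiplier $a \in F^\times$, yielding the required factorization $g = (t^\gamma \lambda_0)\,g'$.

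The main obstacle is the first step: implementing the RTW norm formalism in this $\delta$-hermitian setting and extracting the key conclusion that every similitude translates $\alpha$ by exactly $v(\mu)/2$. This rests on an anisotropy-and-uniqueness statement for norms compatible with $\hat h$ — itself a nontrivial input — but once it is in hand, the combinatorial analysis in $(\Z/2)^n$ isolating $\pi = \mathrm{id}$ and the Hensel-type extraction of a square root are elementary, and the descent through the graded components $V_i$ is mechanical.
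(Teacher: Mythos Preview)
Your proposal is correct and follows essentially the same route as the paper: a norm on $\hat V$ coming from the RTW framework, the observation that a similitude shifts this norm by $\tfrac12 v(\mu(g))$, a combinatorial argument in $\tfrac12\Gamma/\Gamma$ forcing $v(\mu(g))\in 2\Gamma$ when $n\geq3$, a Henselian square-root extraction, and finally the descent to the $V_i$ via the graded module. Your permutation argument in $(\Z/2\Z)^n$ is a clean repackaging of the paper's coset computation.

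The one point worth noting is that what you flag as ``the main obstacle'' is in fact not an obstacle in the paper's treatment. Rather than defining $\alpha$ via a splitting base and then invoking a uniqueness theorem for compatible norms, the paper simply \emph{defines} the norm by $\nu(x)=\tfrac12 v\bigl(\hat h(x,x)\bigr)$ (citing \cite{RTW} only for the fact that this is indeed a norm when $\hat h$ is anisotropic). With this definition the transport property is a one-line computation from the similitude equation: $\nu\bigl(g(x)\bigr)=\tfrac12 v\bigl(\mu(g)\hat h(x,x)\bigr)=\nu(x)+\tfrac12 v\bigl(\mu(g)\bigr)$. This sidesteps the uniqueness input entirely and shows that your ``nontrivial input'' is unnecessary.
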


\begin{proof}
The field $\hat F$ carries the $(t_1,\ldots,t_n)$-adic valuation $v$
with value group $\Z^n$ ordered lexicographically from right to
left. This valuation is Henselian; it extends in a unique way to a
valuation on $\hat D$ with value group $\Z^n$. We write again $v$
for this valuation on $\hat D$. Because $\hat h$ is anisotropic and
$v$ is Henselian, we may define a norm $\nu$ on $\hat V$ by the
following formula (see \cite[Prop.~3.1]{Sch}, \cite[Cor. 3.6, Th.~4.6,
Prop.~4.2]{RTW}):
\[
\nu(x) = \half v\bigl(\hat h(x,x)\bigr) \qquad\text{for $x\in \hat
  V$.}
\]
To describe the value set of this norm, let $\varepsilon_i=(0,\ldots,
1,\ldots,0)=v(t_i)$ be the $i$-th element in the standard base of
$\Z^n$. For $\alpha=(\alpha_1,\ldots,\alpha_n)\in\Z^n$ we write
$t^\alpha=t_1^{\alpha_1}\ldots t_n^{\alpha_n}\in \hat F$. Every nonzero vector
$x_i\in\hat V_i$ can be written as a series $x_i=\sum_\alpha
x_{i\alpha}t^\alpha$ with $x_{i\alpha}\in V_i$, where the support
$\{\alpha\mid x_{i\alpha}\neq0\}$ is a well-ordered subset of
$\Z^n$. If $\alpha_0$ is the minimal element in this support, then
since $\hat h(x_{i\alpha_0},x_{i\alpha_0})=t_i \hat h_i(x_{i\alpha_0},
x_{i\alpha_0})\neq0$ we have $v\bigl(\hat  
h(x_i,x_i)\bigr)=2\alpha_0+\varepsilon_i$. Thus,
\[
\nu(\hat
V_i\setminus\{0\})=\half\varepsilon_i+\Z^n\subset(\half\Z)^n.
\]
It follows that $\nu(\hat V_i)\cap \nu(\hat V_j)=\{\infty\}$ for $i\neq
j$. Therefore, for $x=x_1+\cdots+x_n$ with $x_i\in\hat V_i$ for all
$i$, we have
\begin{equation}
\label{eq:min}
\nu(x)=\min\bigl(\nu(x_1),\ldots,\nu(x_n)\bigr).
\end{equation}
Thus, the value set of $\hat V$, for which we use the notation
$\Gamma_{\hat V}$, is
\[
\Gamma_{\hat V}=\{\nu(x)\mid x\in \hat V\setminus\{0\}\} =
\bigcup_{i=1}^n (\half\varepsilon_i+\Z^n)\subset(\half\Z)^n.
\]

We also need to consider the graded structures associated to norms and
valuations. For $\alpha\in\Z^n$ we let
\[
\hat F_{\geq\alpha}=\{a\in F\mid v(a)\geq\alpha\}, \quad
\hat F_{>\alpha}=\{a\in F\mid v(a)>\alpha\},\quad \text{and } \hat
F_\alpha=\hat F_{\geq\alpha}/\hat F_{>\alpha}.
\]
Thus, $\hat F_\alpha$ is a $1$-dimensional vector space over $F$,
spanned by the image of $t^\alpha$. We let
\[
\gr(\hat F)=\bigoplus_{\alpha\in\Z^n} \hat F_\alpha.
\]
For each nonzero $a\in \hat F$, let $\tilde a=a+\hat
F_{>v(a)}\in\gr(\hat F)$. We also let $\tilde0=0\in\gr(\hat F)$, and
note that the multiplication in $\hat F$ induces a multiplication on
$\gr(\hat F)$, which turns this $F$-vector space into a commutative
graded ring in which every nonzero homogeneous element is invertible:
\[
\gr(\hat F)=F[\tilde t_1,\tilde t_1^{-1}, \ldots, \tilde t_n, \tilde
t_n^{-1}].
\]
The same construction can be applied to $\hat D$, yielding the graded
ring $\gr(\hat D)=D\otimes_F\gr(\hat F)$, and also to $\hat V$, yielding
the graded module $\gr(\hat V)$ over $\gr(\hat
D)$. From~\eqref{eq:min} it follows that 
\[
\gr(\hat V)=\gr(\hat V_1)\oplus\cdots\oplus \gr(\hat V_n),
\]
see \cite[Remark~2.6]{RTW}. For each $i$, we have $\gr(\hat
V_i)=V_i\otimes_F\gr(\hat F)$, with a grading shifted by
$\half\varepsilon_i$. Note that the grade sets of $\gr(\hat V_1)$,
\ldots, $\gr(\hat V_n)$, which are the value sets of $\hat V_1$,
\ldots, $\hat V_n$, are pairwise disjoint, hence every homogeneous
component of $\gr(\hat V)$ lies in exactly one $\gr(\hat V_i)$.

Let $\tilde\rho=\rho\otimes\Id_{\gr(F)}$ be the involution of the
first kind on $\gr(\hat D)$ extending $\rho$. By \cite[Th.~4.6,
Prop.~4.2]{RTW}, we have $v(\hat h(x,y))\geq\nu(x)+\nu(y)$ for all $x$,
$y\in\hat V$. Therefore, the $\delta$-hermitian form $\hat h$ induces a
$\delta$-hermitian form $\tilde h$ on $\gr(\hat V)$, defined on homogeneous
elements by
\[
\tilde h(\tilde x,\tilde y) =
\begin{cases}
  \tilde{\hat h(x,y)}&\text{if $v(\hat h(x,y))=\nu(x)+\nu(y)$,}\\
  0&\text{if $v(\hat h(x,y))>\nu(x)+\nu(y)$},
\end{cases}
\]
and extended by bilinearity to $\gr(\hat V)$. Letting $\tilde h_i$
denote the restriction of $\tilde h$ to $\gr(\hat V_i)$, we have
\[
(\gr(\hat V),\tilde h) = (\gr(\hat V_1),\tilde h_1)\perp\ldots\perp
(\gr(\hat V_n),\tilde h_n).
\]
As observed above, we have $\gr(\hat V_i)=V_i\otimes_F\gr(\hat
F)$. The $\delta$-hermitian form $\tilde h_i$ is given by
\[
\tilde h_i(x,y)=h_i(x,y)\otimes\tilde t_i\qquad\text{for $x$, $y\in
  V_i$.}
\]

Now, suppose $g\colon\hat V\to\hat V$ is a similitude of $(\hat V,\hat
h)$, with multiplier $\mu(g)\in\hat F^\times$. For $x\in \hat V$ we
have $\hat h(g(x),g(x))=\mu(g)\hat h(x,x)$, hence
\[
\nu\bigl(g(x)\bigr)=\nu(x)+\half v\bigl(\mu(g)\bigr) \qquad\text{for
  all $x\in\hat V$.}
\]
As a result, $g$ induces a homomorphism of $\gr(\hat D)$-modules
$\tilde g\colon \gr(\hat V)\to\gr(\hat V)$, defined on homogeneous
elements by
\[
\tilde g(\tilde x)=\tilde{g(x)} \qquad\text{for $x\in\hat V$.}
\]
This homomorphism is a similitude of $(\gr(\hat V),\tilde h)$ with
multiplier $\tilde{\mu(g)}$, and it shifts the grading by $\half
v\bigl(\mu(g)\bigr)$. 
It follows that the value set $\Gamma_{\hat V}$, which is the grade
set of $\gr(V)$, is stable under translation by $\half
  v\bigl(\mu(g)\bigr)\in(\half\Z)^n$. We must therefore have for all
  $i=1$, \ldots, $n$
  \[
  \half\varepsilon_i+\half v\bigl(\mu(g)\bigr) \in \bigcup_{\ell=1}^n
  (\half\varepsilon_\ell+\Z^n).
  \]
  Suppose $i$, $j$ are such that $\half\varepsilon_i+\half
  v\bigl(\mu(g)\bigr)\in \half\varepsilon_j+\Z^n$, and $i\neq j$. For
  $k\neq i$, $j$ we then have
  \[
  \half\varepsilon_k+\half v\bigl(\mu(g)\bigr) \in \half\varepsilon_k+
  \half\varepsilon_j - \half\varepsilon_i +\Z^n \not\subset
  \bigcup_{\ell=1}^n(\half\varepsilon_\ell+\Z^n).
  \]
  This contradiction implies that $\half\varepsilon_i+\half
  v\bigl(\mu(g)\bigr) \in\half\varepsilon_i+\Z^n$ for all $i$, hence
  $v\bigl(\mu(g)\bigr)\in 2\Z^n$. Let
$v\bigl(\mu(g)\bigr)=2v(\lambda_0)$ for some $\lambda_0\in \hat 
  F^\times$, hence $v\bigl(\mu(\lambda_0^{-1}g)\bigr)=0$. Consider the
  residue $\overline{\mu(\lambda_0^{-1}g)}=a\in F^\times$. We have
  $\mu(\lambda_0^{-1}g)=a(1+m)$ for some $m\in\hat F$ with
  $v(m)>0$. Since 
  $\hat F$ is Henselian and the characteristic of the residue field
  $F$ is different from~$2$, we may find $\lambda_1\in\hat F$ with
  $\lambda_1^2=1+m$. Then $\mu(\lambda_1^{-1}\lambda_0^{-1}g)=a$, so
  we may write $g=\lambda g'$ with $\lambda=\lambda_0\lambda_1\in
  \hat F^\times$ and $g'=\lambda^{-1}g$. Then $g'\in\Sim(\hat V,\hat
  h)$ and $\mu(g')=a\in F^\times$. The first assertion of the theorem
  is thus proved.

Now, we prove the second assertion. 
Consider a similitude $g\in\Sim(\hat V,\hat h)$ and assume its
multiplier $\mu(g)$ is in $F^\times\subset\hat F^\times$. Since  
$v\bigl(\mu(g)\bigr)=0$, the similitude $\tilde g\in\Sim(\gr(\hat
V),\tilde h)$ preserves the grading. We may therefore consider its 
restriction $g_i$ to the homogeneous component of degree~$\half
\varepsilon_i$, which is $V_i$. Because $\tilde g$ is a similitude
with multiplier $\tilde{\mu(g)}=\mu(g)$ and
\[
\tilde h(x,y) = h_i(x,y)\tilde t_i \qquad\text{for $x$, $y\in V_i$,}
\]
it follows that $g_i$ is a similitude of $(V_i,h_i)$ with multiplier
$\mu(g)$. 
\end{proof}

The last part of the proof above establishes the following result:

\begin{lemma}
  \label{prop:sim2}
  For every similitude $g\in\Sim(\hat V,\hat h)$ such that $\mu(g)\in
  F^\times\subset\hat F^\times$, the similitude 
  $\tilde g\in\Sim(\gr(\hat V),\tilde h)$
  has the form 
  \[
  \tilde g=(g_1\otimes\Id_{\gr(\hat F)}) \oplus\cdots\oplus
  (g_n\otimes\Id_{\gr(\hat F)})
  \] 
  for some similitudes $g_i\in\Sim(V_i,h_i)$ with
  $\mu(g)=\mu(g_1)=\cdots=\mu(g_n)$. 
\end{lemma}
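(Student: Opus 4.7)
The lemma is essentially a structural recapitulation of the final paragraph of the proof of Theorem~\ref{gensummainthm}, so my plan is to extract and assemble the ingredients that argument already produced, making the tensor-product decomposition explicit. The key input is that the value sets $\nu(\hat V_i\setminus\{0\})=\half\varepsilon_i+\Z^n$ are pairwise disjoint cosets of $\Z^n$ inside $(\half\Z)^n$, so that $\gr(\hat V_i)$ is precisely the sum of the homogeneous components of $\gr(\hat V)$ whose grade lies in $\half\varepsilon_i+\Z^n$.

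First I would observe that since $\mu(g)\in F^\times$ we have $v(\mu(g))=0$, hence by construction $\tilde g$ shifts the grading on $\gr(\hat V)$ by $\half v(\mu(g))=0$, i.e.\ $\tilde g$ is a homogeneous $\gr(\hat D)$-linear endomorphism of degree~$0$. Combined with the disjointness of the grade sets of the $\gr(\hat V_i)$, this forces $\tilde g\bigl(\gr(\hat V_i)\bigr)\subset\gr(\hat V_i)$ for every $i$, so $\tilde g$ splits as an orthogonal sum of its restrictions $\tilde g_i:=\tilde g|_{\gr(\hat V_i)}$.

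Next I would exploit the identification $\gr(\hat V_i)=V_i\otimes_F\gr(\hat F)$ (with grading shifted by $\half\varepsilon_i$). Since the homogeneous component of $\gr(\hat V_i)$ of degree $\half\varepsilon_i$ is canonically $V_i$, I define $g_i\in\End_DV_i$ as the restriction of $\tilde g_i$ to this component. Because $\tilde g_i$ is $\gr(\hat F)$-linear and homogeneous of degree $0$, its action on $V_i\otimes \tilde t^{\alpha}$ is forced to be $g_i\otimes\Id_{\gr(\hat F)}$; thus $\tilde g_i=g_i\otimes\Id_{\gr(\hat F)}$ and the direct sum decomposition in the statement follows.

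Finally, I would verify the similitude condition. For $x,y\in V_i$, the formula $\tilde h(x,y)=h_i(x,y)\otimes\tilde t_i$ and the fact that $\tilde g$ is a similitude of $(\gr(\hat V),\tilde h)$ with multiplier $\tilde{\mu(g)}=\mu(g)$ give
\[
h_i\bigl(g_i(x),g_i(y)\bigr)\otimes\tilde t_i = \mu(g)\,h_i(x,y)\otimes\tilde t_i,
\]
so $g_i\in\Sim(V_i,h_i)$ with multiplier $\mu(g)$. There is no real obstacle here: the disjointness of the grade sets is doing all the work, and that was already established in the proof of Theorem~\ref{gensummainthm}; the only thing to be careful about is keeping the gradings straight to confirm that $\tilde g_i$ really is of the tensor-product form rather than merely stabilizing $\gr(\hat V_i)$.
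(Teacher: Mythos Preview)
Your proposal is correct and follows exactly the paper's approach: the paper simply asserts that ``the last part of the proof above establishes'' the lemma, referring to the final paragraph of the proof of Theorem~\ref{gensummainthm}, and your write-up is a faithful (indeed more explicit) unpacking of that paragraph. The only addition you make---spelling out why $\tilde g_i$ is forced to equal $g_i\otimes\Id_{\gr(\hat F)}$ via $\gr(\hat F)$-linearity and degree-$0$ homogeneity---fills in a small detail the paper leaves implicit.
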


Abusing notation, we write $g_1\oplus\cdots\oplus
g_n$ for $(g_1\otimes\Id_{\gr(\hat F)}) \oplus\cdots\oplus
  (g_n\otimes\Id_{\gr(\hat F)})$.
Note that conversely, given similitudes $g_i\in\Sim(V_i,h_i)$ for
$i=1$, \ldots, $n$ such that $\mu(g_1)=\cdots=\mu(g_n)$, we may define
a similitude $g\in\Sim(\hat V,\hat h)$ such that $\tilde
g=g_1\oplus\cdots\oplus g_n$ and $\mu(g)=\mu(g_1)\in F^\times$ by
\[
g=(g_1\otimes\Id_{\hat F})\oplus\cdots\oplus (g_n\otimes\Id_{\hat F}).
\]

Now, let us apply these results to the setting of a generic orthogonal
sum of $1$-dimensional skew-hermitian forms over a quaternion division
algebra $Q$ over $F$. The following proposition is a key tool for the
examples we produce below.

\begin{prop}
  \label{key.prop}
  Let $Q$ be a quaternion division algebra over $F$, and consider pure
  quaternions $q_1$, \ldots, $q_n$, with respective squares
  $a_1$, \dots, $a_n\in F^\times$. Let $\hat F$ be the field of
  iterated Laurent 
  series in $n$ indeterminates $t_1$, \ldots, $t_n$ over $F$, let
  $\hat Q=Q\otimes_F\hat F$, and consider the involution
  $\sigma$ on $A=M_n(\hat Q)$ adjoint to the skew-hermitian form
  $\hat h=\qf{t_1q_1,\dots, t_nq_n}$. If $n\geq 3$, then
  \begin{enumerate}
  \item The involution $\sigma$ has discriminant
    $\disc\sigma=a_1\ldots a_n\cdot \hat F^{\times2};$
  \item The involution $\sigma$ admits improper similitudes if and
    only if there exist 
    $\varepsilon_1$, \ldots, $\varepsilon_n\in\{\pm1\}$ such that
    $\varepsilon_1\ldots\varepsilon_n=-1$ and
    \[
    G_{\varepsilon_1}(a_1)\cap\ldots\cap
    G_{\varepsilon_n}(a_n)\neq\varnothing.
    \]
  \item The involution $\sigma$ admits square-central improper
    similitudes if and only if $n$ is odd and
    \[
    G_-(a_1)\cap\ldots\cap G_-(a_n)\neq\varnothing.
    \]
  \end{enumerate}
\end{prop}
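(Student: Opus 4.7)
The strategy is to combine Theorem~\ref{gensummainthm} and Lemma~\ref{prop:sim2} with the description of one-dimensional similitudes from Lemma~\ref{lem:sim1dim} to reduce questions about $\Sim(\hat V,\hat h)$ to arithmetic conditions on the squares $a_i=q_i^2$. Assertion~(1) follows from a direct computation: the involution $\sigma$ is adjoint to the diagonal skew-hermitian form $\qf{t_1q_1,\dots,t_nq_n}$ over $(\hat Q,\ba)$, whose discriminant modulo squares is $\prod_i(t_iq_i)^2=\prod_it_i^2a_i$, which equals $\prod_ia_i$ modulo $\hat F^{\times2}$ since each $t_i^2$ is a square.

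For (2) and (3), Theorem~\ref{gensummainthm}(1) (which requires $n\geq3$) lets us replace any similitude $g\in\Sim(\hat V,\hat h)$ by a rescaling $\lambda^{-1}g$ with multiplier in $F^\times$; since scalar multiplication is proper, and since $g^2\in\hat F^\times$ iff $(\lambda^{-1}g)^2\in\hat F^\times$, both the properness and the square-central conditions are preserved under this reduction. Lemma~\ref{prop:sim2} then provides a decomposition $\tilde g=g_1\oplus\cdots\oplus g_n$ with $g_i\in\Sim(\qf{q_i})$ sharing the common multiplier $\mu=\mu(g)\in F^\times$. Taking residues in $\Nrd_{\hat A}(g)=\pm\mu^n\in\hat F^\times$ and comparing with the leading term $\prod_i\Nrd_{\hat Q}(g_i)$, we conclude that the properness sign of $g$ equals $\prod_i\varepsilon_i$, where $\varepsilon_i=\pm1$ is the properness sign of $g_i$ prescribed by Lemma~\ref{lem:sim1dim}. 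This yields the ``only if'' direction of (2). The ``if'' direction is explicit: given $\mu\in\bigcap_iG_{\varepsilon_i}(a_i)$ with $\prod_i\varepsilon_i=-1$, choose $g_i\in\Sim(\qf{q_i})$ with multiplier $\mu$ and sign $\varepsilon_i$, and verify via Lemma~\ref{lem:propsum} that $g=(g_1\otimes\Id_{\hat F})\oplus\cdots\oplus(g_n\otimes\Id_{\hat F})$ is an improper similitude of $\hat h$ with multiplier $\mu$.

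For (3), we additionally require $g^2\in\hat F^\times$. This passes to the associated graded as $(\tilde g)^2=g_1^2\oplus\cdots\oplus g_n^2\in\gr(\hat F)^\times$, forcing each $g_i^2$ to lie in $F^\times$ and the $g_i^2$'s to share a common value $\alpha$. Lemma~\ref{lem:sim1dim} classifies the similitudes of $\qf{q_i}$ that are square-central in $\hat Q$ into three families: scalars $\beta\in F^\times$ (proper, with $g_i^2=\mu=\beta^2$), elements $\gamma q_i$ (proper, with $g_i^2=\gamma^2a_i$ and $\mu=-\gamma^2a_i$), and pure quaternions $u_i$ anticommuting with $q_i$ (improper, with $g_i^2=u_i^2=\mu$). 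The existence of any improper $g_i=u_i$ forces $\mu\notin F^{\times2}$, since a pure quaternion with square in $F^{\times2}$ would have to lie in $F$, contradicting purity; this rules out proper $g_j=\beta$. And no proper $\gamma q_j$ can occur either, since its square $\gamma^2a_j=-\mu$ would differ from $\alpha=\mu$. Thus all $g_i$'s must be improper pure quaternions with common square $\mu\in\bigcap_iG_-(a_i)$, and for the product of signs to be $-1$, $n$ must be odd. The converse is explicit: for $n$ odd and $\mu\in\bigcap_iG_-(a_i)$, Lemma~\ref{lem:sim1dim} yields improper $u_i$'s with $u_i^2=\mu$, and the direct sum $u_1\oplus\cdots\oplus u_n$ is an improper square-central similitude.

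The main obstacle is the case analysis in (3): the crucial dichotomy is that improper $g_i$'s force $\mu\notin F^{\times2}$ while proper scalar $g_j$'s force $\mu\in F^{\times2}$, and combined with the sign mismatch ruling out $g_j=\gamma q_j$, this pins down the configuration to all $g_i$'s improper and $n$ odd.
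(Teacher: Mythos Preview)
Your proof is correct and follows essentially the same route as the paper's: reduce via Theorem~\ref{gensummainthm} to $\mu(g)\in F^\times$, decompose $\tilde g=g_1\oplus\cdots\oplus g_n$ via Lemma~\ref{prop:sim2}, read off the properness sign as $\prod\varepsilon_i$, and in the square-central case use the dichotomy that an improper $g_i$ (a pure quaternion anticommuting with $q_i$) forces $g_i^2=\mu\notin F^{\times2}$, which eliminates proper $g_j\in F^\times$ and then the sign mismatch $g_j^2=-\mu$ eliminates proper $g_j\in F^\times q_j$. The only cosmetic differences are that the paper phrases the transfer of properness from $g$ to $\tilde g$ by invoking the computation of Lemma~\ref{lem:propsum} directly (rather than via residues of $\Nrd$), and rules out $\mu\in F^{\times2}$ by noting $Q=(\mu,a_i)_F$ is division rather than by your pure-quaternion argument; these are equivalent.
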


\begin{proof}
  The discriminant of $\sigma$ is the product of the discriminants of
  the involutions adjoint to $\qf{t_iq_i}$ for all $i$.  Since the
  discriminant of the adjoint involution of $\qf{q}$, for any nonzero
  pure quaternion $q$, is the square class of $q^2$, we get assertion
  (1).

  Suppose that the hermitian form $\hat h$ admits
  improper similitudes. 
  Since it is a generic orthogonal sum, as defined above, of the 
  $1$-dimensional skew-hermitian forms $h_i=\qf{q_i}$, we may apply
  Theorem~\ref{gensummainthm} and Lemma~\ref{prop:sim2}. Therefore,
  since $n\geq 3$, we may find an 
  improper similitude $g$ of $\hat h$ 
  with multiplier $\mu=\mu(g)\in F^\times\subset \hat F^\times$. By
  Lemma~\ref{prop:sim2} we have $\tilde g=g_1\oplus\cdots\oplus
  g_n$ with $g_i\in\Sim(h_i)$ and $\mu(g_i)=\mu$ for $i=1$, \ldots,
  $n$. Because $g$ and $\tilde g$ are improper, the same computation
  as in 
  Lemma~\ref{lem:propsum} shows that the number of improper
  similitudes among $g_1$, \ldots, $g_n$ is odd. Letting
  $\varepsilon_i=+1$ if $g_i$ is proper and $\varepsilon_i=-1$ if
  $g_i$ is improper, we thus have
  \[
  \mu\in G_{\varepsilon_1}(a_1)\cap\ldots\cap G_{\varepsilon_n}(a_n)
  \quad\text{and}\quad \varepsilon_1\ldots\varepsilon_n=-1.
  \]
  
  Assume in addition $g$ is square-central. From $\sigma(g)g=\mu$, we
  get $g^2=\varepsilon \mu$ for some $\varepsilon\in\{\pm 1\}$. Hence
  we also have $\tilde g^2=\varepsilon \tilde \mu=\varepsilon \mu$. By
  Lemma~\ref{prop:sim2}, this occurs if and only if $g_i^2=\varepsilon
  \mu$ for $i=1$, \dots, $n$. 
  Since $g$ is improper, there is at least one $i$ for which $g_i$ is
  improper. From the description of similitudes recalled in the proof
  of Lemma~\ref{lem:sim1dim}, we get that $g_i$ is a pure
  quaternion that anticommutes with $q_i$. Therefore  
  \[
  \mu=\mu(g_i)=\sigma(g_i)g_i=q_i^{-1}\overline{g_i}q_i g_i=g_i^2.
  \]
  It follows that $\varepsilon=1$. 
  Now assume for the sake of contradiction that $g_j$ is proper for
  some $j$. Then $g_j$ is a quaternion that commutes with $q_j$, i.e.,
  $g_j\in F(q_j)$, and it is square-central, hence it belongs to
  $F^\times\cup F^\times q_j$. The first case leads to
  $\mu=\mu(g_j)\in F^{\times 2}$, which is impossible since
  $Q=(\mu,a_i)_F$ is a division algebra. The second case leads to
  $\mu=\mu(g_j)=-g_j^2$, which is impossible since $\varepsilon=1$.  
 Therefore, $g_j$ is improper for all $j$, that is
 $\varepsilon_1=\dots=\varepsilon_n=-1$. Since $g$ is improper, this
 implies $n$ is odd.    
 
  We have thus proved the ``only if'' parts of~(2) and (3).
  The converse statements are easy consequences of
 Lemma~\ref{lem:propsum} and Lemma~\ref{diagonalsimilitude}.  
\end{proof}

\subsection{Examples of groups of type $^2\sfD_n$} 
\label{Dnexamples.section}

With Proposition~\ref{key.prop} in hand, we can now produce explicit
examples of groups of type $\sfD_n$, proving that conditions (Out~1),
(Out~2), and (Out~3) are not equivalent.  

In our examples, the algebra has the form $A=M_n(Q)$ for some 
integer $n\geq 3$, and some quaternion division algebra $Q$ over
$F$. As a preliminary observation concerning condition (Out~1), note
that the set of discriminants of orthogonal involutions on $A$ is
$(-1)^n\nrd_Q(Q^\times)$. This follows easily from the
fact that any quaternion can be written as a product of two pure
quaternions. On the other hand, a quadratic extension $F(\sqrt\delta)$
of $F$ is a splitting field of $Q$ if and only if $Q$ contains a pure
quaternion $q$ such that $\delta=q^2=-\nrd_Q(q)$. Hence, if $n$ is
odd, for any splitting field $F(\sqrt\delta)$, $A$ does admit
orthogonal involutions $\sigma$ with discriminant $\delta$, and
(Out~1) holds for the corresponding group.  As opposed to this, it is
not always true that $A$ admits an involution $\sigma$ for which
(Out~1) holds if $n$ is even, as we now proceed to show.

\subsubsection{Type $^2\sfD_n$ with $n$ even} In this subsection, we
assume $A=M_n(Q)$ with $n=2m$ even, $m\geq2$. We first prove:  

\begin{prop}
Assume $A=M_n(Q)$ with $n$ even. 
The algebra $A$ admits an orthogonal involution $\sigma$ such that $A$
is split by the discriminant quadratic algebra $Z$ of $\sigma$ if and
only if $-1\in \Nrd_Q(Q^\times)$.  
\end{prop}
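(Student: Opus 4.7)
The plan is to translate the existence of $\sigma$ into a condition on reduced norms using the two facts already recorded before the proposition: for $n$ even, the set of discriminants of orthogonal involutions on $A=M_n(Q)$ is $\Nrd_Q(Q^\times)\cdot F^{\times 2}/F^{\times 2}$, and a quadratic extension $F(\sqrt\delta)$ is a splitting field of $Q$ if and only if $\delta=q^2=-\Nrd_Q(q)$ for some nonzero pure quaternion $q\in Q$. Since $A$ is Brauer-equivalent to the division algebra $Q$, the scalar extension $A\otimes_F(F\times F)\simeq A\times A$ is not split; hence any $\sigma$ meeting our requirement must satisfy $\disc\sigma\notin F^{\times 2}$, so that $Z$ is a field.

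For the direct implication, I would suppose such a $\sigma$ exists and set $d=\disc\sigma$. Then $d\in\Nrd_Q(Q^\times)$ because $d$ is a discriminant, and $d=q^2$ for some nonzero pure quaternion $q\in Q$ because $F(\sqrt d)$ splits $Q$; in particular $-d=\Nrd_Q(q)\in\Nrd_Q(Q^\times)$. Since $\Nrd_Q(Q^\times)$ is a subgroup of $F^\times$, we conclude $-1=(-d)\cdot d^{-1}\in\Nrd_Q(Q^\times)$.

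Conversely, assume $-1\in\Nrd_Q(Q^\times)$. Pick any nonzero pure quaternion $q\in Q$ and set $d=q^2\in F^\times$. Because $Q$ is a division algebra and $q$ is a nonzero pure quaternion, $q\notin F$, so $d=q^2$ cannot be a square in $F$ (otherwise $q=\pm\sqrt d\in F$). Moreover $-d=\Nrd_Q(q)\in\Nrd_Q(Q^\times)$, hence $d=(-1)\cdot(-d)\in\Nrd_Q(Q^\times)$, so $d$ belongs to the set of discriminants. Choose an orthogonal involution $\sigma$ on $A$ with $\disc\sigma\equiv d\pmod{F^{\times 2}}$; its discriminant quadratic algebra is $Z=F(\sqrt d)=F(q)$, a subfield of $Q$, which therefore splits $Q$ and hence $A$. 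The only delicate point is the observation that $q^2\notin F^{\times 2}$ for a nonzero pure quaternion $q$ in a division algebra, which ensures that $Z$ is a field rather than $F\times F$; once this is recorded, the rest is elementary manipulation inside the subgroup $\Nrd_Q(Q^\times)$.
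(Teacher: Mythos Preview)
Your proof is correct and follows essentially the same approach as the paper: both directions use the preliminary observations that the discriminants of orthogonal involutions on $M_n(Q)$ (for $n$ even) are exactly the reduced norms of $Q^\times$, and that $F(\sqrt\delta)$ splits $Q$ if and only if $-\delta\in\Nrd_Q(Q^\times)$. Your treatment is in fact slightly more careful than the paper's in explicitly ruling out the case $Z\simeq F\times F$, but the core argument is the same.
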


\begin{proof} 
If $A$ is split by the discriminant algebra $Z=F(\sqrt\delta)$ of some
orthogonal involution $\sigma$, then $\delta=q^2=-\nrd_Q(q)$ for some
pure quaternion $q$, and $\delta=\nrd_A(x)$ for some $\sigma$
skew-symmetric $x\in A$, so that $\delta\in\nrd_Q(Q^\times)$. Hence,
$\delta$ and $-\delta$ are reduced norms, and we get
$-1\in\nrd_Q(Q^\times)$.  

Assume conversely that $-1\in\Nrd_Q(Q^\times)$, and pick an arbitrary
quadratic field $Z=F(\sqrt\delta)$ that splits $Q$. There exists a
pure quaternion $q\in Q^0$ such that $\delta=q^2=-\nrd_Q(q)$. Since
$-1\in\nrd_Q(Q^\times)$, we get $\delta\in\nrd_Q(Q^\times)$, and since
$n$ is even, it follows that there exists an orthogonal involution
$\sigma$ of discriminant $\delta$.  
\end{proof}

In view of Proposition~\ref{orthogonal.prop}, the following result
provides examples of groups 
$\gPGO^+(A,\sigma)$ of type $^2\sfD_n$, with $n$ even and $n\geq 3$,
which admit outer automorphisms but no outer automorphisms of order
$2$.  

\begin{prop}
  \label{expleorth2non3}
  Let $Q$ be a quaternion division algebra such that
  $-1\in\nrd_Q(Q^\times)$, and let $Z$ be a quadratic splitting field
  for $Q$. For every even integer $n\geq 2$ there exists an orthogonal
  involution $\sigma$ of $M_n(Q)$ with discriminant $Z$ such that
  $(A,\sigma)$ admits improper similitudes. Moreover, $(A,\sigma)$
  does not have square-central improper similitudes.
\end{prop}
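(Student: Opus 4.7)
The plan is to realize $\sigma$ as the adjoint of a generic orthogonal sum in the sense of Proposition~\ref{key.prop}. I would work over the iterated Laurent series field $\hat F=F((t_1))\cdots((t_n))$ and the division quaternion algebra $\hat Q=Q\otimes_F\hat F$, which inherit the hypotheses (with $\hat Z=Z\otimes_F\hat F$ remaining a quadratic splitting field of $\hat Q$); after identifying $\hat F$ with $F$, the resulting $\sigma$ on $M_n(\hat Q)$ yields the required example.

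First I would pick pure quaternions $q_1,\ldots,q_n\in Q^0$ whose squares $a_i=q_i^2$ satisfy $a_1\cdots a_n\equiv\delta\pmod{F^{\times2}}$. Since both $-1$ and $\delta$ lie in $\nrd_Q(Q^\times)$, so does $-\delta$, and in the generic configuration the four classes $1,-1,\delta,-\delta$ are distinct in $\nrd_Q(Q^\times)/F^{\times2}$. One may then take $q_1^2\equiv-1$, $q_2^2\equiv-\delta$, and $q_3=q_4,\ldots,q_{n-1}=q_n$ as arbitrary paired pure quaternions, so that $\prod_ia_i\equiv(-1)(-\delta)\equiv\delta\pmod{F^{\times2}}$. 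Letting $\sigma$ be the involution on $M_n(\hat Q)$ adjoint to $\hat h=\qf{t_1q_1,\ldots,t_nq_n}$, Proposition~\ref{key.prop}(1) gives $\disc\sigma\equiv\delta\pmod{\hat F^{\times2}}$, so the discriminant algebra is $\hat Z$.

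For the existence of improper similitudes, I would first show that under the hypothesis $-1\in\nrd_Q(Q^\times)$ one has $(-1,a)_F=0$ for every $a\in F^\times$ which is the square of a pure quaternion in $Q$. Indeed, writing $Q=(a,b)_F$, the norm Pfister form $\pf{a,b}=\qf{1,-a}\perp(-b)\qf{1,-a}$ represents $-1$; the second summand cannot represent $-1$, for that would force $(-b,a)_F=0$ and hence $(a,b)_F=0$, contradicting that $Q$ is division. Thus $-1\in G_+(a_i)$, which forces $G_-(a_i)=b_iG_+(a_i)=(-b_i)G_+(a_i)$, and so $\nrd_Q(Q^\times)=G_+(a_i)\cup G_-(a_i)$ for every $i$. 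Consequently, for any $\mu\in\nrd_Q(Q^\times)$ each Hilbert symbol $(\mu,a_i)_F$ lies in $\{0,Q\}$, and by bilinearity
\[
\prod_{i=1}^n(\mu,a_i)_F=\bigl(\mu,\textstyle\prod_ia_i\bigr)_F=(\mu,\delta)_F.
\]
Choosing $\mu=f\in F^\times$ such that $Q\simeq(\delta,f)_F$ (which exists because $Z$ splits $Q$; here $f\in\nrd_Q(Q^\times)$, since $-f=\nrd_Q(j)$ for the second generator and $-1\in\nrd_Q(Q^\times)$), one obtains $(\mu,\delta)_F=Q$, so the number of indices $i$ with $(\mu,a_i)_F=Q$ is odd. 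Setting $\varepsilon_i=\pm1$ accordingly gives $\varepsilon_1\cdots\varepsilon_n=-1$ with $\mu\in\bigcap_iG_{\varepsilon_i}(a_i)$, and Proposition~\ref{key.prop}(2) produces the improper similitudes.

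The absence of square-central improper similitudes follows immediately from Proposition~\ref{key.prop}(3), since $n$ is even. The main obstacle in this plan will be Step~1, i.e.\ verifying that the pure quaternions $q_i$ can indeed be chosen with $\prod_ia_i\equiv\delta\pmod{F^{\times2}}$; this amounts to controlling the size of $\nrd_Q(Q^\times)/F^{\times2}$, which one handles by a case analysis on whether $-1\in F^{\times2}$ and whether $\delta\equiv-1\pmod{F^{\times2}}$, reducing any degenerate configurations (where this group has fewer than four classes) to the generic one by passing to a further Laurent-series extension of~$F$.
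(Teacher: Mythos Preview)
Your argument contains a genuine error in Step~2. The claimed lemma --- that $(-1,a)_F=0$ whenever $-1\in\nrd_Q(Q^\times)$ and $a$ is the square of a pure quaternion in $Q$ --- is false. Take $Q=(7,3)_{\mathbb Q}$: then $\nrd(3+i+j)=9-7-3=-1$, so $-1\in\nrd_Q(Q^\times)$, yet $(-1,7)_{\mathbb Q}\neq0$ (in fact it equals $Q$, both being ramified exactly at $2$ and $7$). The flaw in your reasoning is the inference ``$\pf{a,b}$ represents $-1$ and $\qf{-b,ab}$ does not, hence $\qf{1,-a}$ does'': representability does not split across an orthogonal sum in this way. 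The follow-up claim $\nrd_Q(Q^\times)=G_+(a_i)\cup G_-(a_i)$ fails too; e.g., over $F=\mathbb C(x,y)$ with $Q=(x,y)_F$ and $a_i=x$, the element $1-y=\nrd(1+j)$ lies in $\nrd_Q(Q^\times)$, but $(1-y,x)_F$ is neither $0$ nor $Q$ (it is ramified at the place $y=1$, where $Q$ is unramified). There are also structural issues: Proposition~\ref{key.prop} produces $\sigma$ on $M_n(\hat Q)$ over $\hat F$, not on the given $M_n(Q)$ over $F$, and it requires $n\geq3$, so the case $n=2$ is missed.

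The paper's proof avoids all of this by working directly over $F$, without generic sums. Writing $Z=F(\sqrt\delta)$ and $Q=(\delta,\nu)_F$, the hypothesis $-1\in\nrd_Q(Q^\times)$ is used (via a short isotropy argument on a $5$-dimensional form) to produce a single pure quaternion $q$ with $a=q^2$ satisfying $(a,\nu)_F=0$; then $Q=(a\delta,\nu)_F$, so there is a pure quaternion $q'$ with $(q')^2=a\delta$. One takes $h=\qf{q',q,\ldots,q}$, which has discriminant $\delta$, and Lemma~\ref{diagonalsimilitude} (not Proposition~\ref{key.prop}) yields an improper similitude with multiplier $\nu$ directly over $F$. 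The absence of square-central improper similitudes is immediate from Lemma~\ref{out3impliesnodd}, since $M_n(Q)$ is not split.
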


\begin{proof}
  Since $Z$ is a quadratic splitting field for $Q$, there exists
  $\delta$, $\nu\in F^\times$ such that $Z=F(\sqrt\delta)$ and
  $Q=(\delta,\nu)_F$.  Moreover, since the norm form of $Q$ represents
  $-1$, the quadratic form $\qf{1,-\nu,-\delta, \nu\delta, 1}$ is
  isotropic. After scaling, we get that
  $\qf{-\nu,1,\delta\nu,-\delta, -\nu}$ also is isotropic, hence
  $\qf{1,-\nu}$ and $\qf{\delta,\nu,-\delta\nu}$ represent a common
  value. This means there exists a pure quaternion $q\in Q^0$ such
  that $a=q^2$ is a norm for the quadratic field extension
  $F(\sqrt{\nu})/F$, or equivalently $(a,\nu)_F=0$.  So we have
  $Q=(\delta,\nu)_F=(a\delta,\nu)_F$. Let $q'$ be a pure
  quaternion with square $a\delta$, and let $\sigma$ be the adjoint
  involution with respect to the skew-hermitian form
  $h=\qf{q',q,q,\dots, q}$. Since 
  $n$ is even, $\sigma$ has discriminant $\delta$. Moreover, by
  Lemma~\ref{diagonalsimilitude}, $\sigma$ admits an improper similitude
  with multiplier $\nu$. Since $Q$ is a division
  algebra, the last 
  assertion follows from Lemma~\ref{out3impliesnodd}.
\end{proof}

To produce examples of groups satisfying (Out~1) but with no
outer automorphisms, we use the ``orthogonal generic sums'' defined
above. More precisely, we consider the following:  

\begin{prop}
  \label{noimproperDeven}
  Let $Q$ be a quaternion division algebra. Assume $Q$ contains pure
  quaternions $q_1$, $q_2$, $q_3$ with respective squares $a_1$,
  $a_2$, and $a_3$ such that
  \begin{enumerate}
  \item $Q$ is split by $F(\sqrt{a_1a_2})$;
  \item $Q$ is not split by $F(\sqrt{a_1a_3})$ nor by
    $F(\sqrt{a_2a_3})$.
  \end{enumerate}
  Then the involution $\sigma$ on $A=M_n(\hat Q)$, with $n$ even, $n\geq3$,
  defined as in Proposition~\ref{key.prop} with $q_1$, $q_2$, $q_3$ as
  above and $q_4=\dots=q_n=q_3$, admits no improper similitudes, yet
  $Q$ is split by the discriminant quadratic extension $Z/F$.
\end{prop}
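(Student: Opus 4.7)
The plan is to derive both assertions directly from Proposition~\ref{key.prop}. First I would settle the discriminant: since $n$ is even, Proposition~\ref{key.prop}(1) gives
\[
\disc\sigma = a_1\,a_2\,a_3^{n-2}\cdot\hat F^{\times2} = a_1a_2\cdot\hat F^{\times2},
\]
so the discriminant quadratic extension is $Z=\hat F(\sqrt{a_1a_2})$, and hypothesis~(1), extended to $\hat F$, says exactly that $\hat Q$ is split by $Z$.

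Next I would establish the nonexistence of improper similitudes by contradiction, using Proposition~\ref{key.prop}(2). Assume signs $\varepsilon_1,\dots,\varepsilon_n\in\{\pm1\}$ with $\varepsilon_1\cdots\varepsilon_n=-1$ and some $\mu\in F^\times$ lying in $G_{\varepsilon_1}(a_1)\cap\cdots\cap G_{\varepsilon_n}(a_n)$. Because $a_4=\cdots=a_n=a_3$ and $G_+(a_3)\cap G_-(a_3)=\varnothing$ (since $Q$ is a division algebra), the signs $\varepsilon_3,\dots,\varepsilon_n$ must all coincide; call this common value $\varepsilon$. As $n$ is even, $\varepsilon^{n-2}=1$, so the product condition collapses to $\varepsilon_1\varepsilon_2=-1$, leaving only four possibilities for the triple $(\varepsilon_1,\varepsilon_2,\varepsilon)$.

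The main step then invokes the bimultiplicativity identity $(aa',\mu)_F=(a,\mu)_F\cdot(a',\mu)_F$ in the $2$-torsion of $\mathrm{Br}(F)$. In each of the four remaining cases, exactly one of $(a_1,\mu)_F$, $(a_2,\mu)_F$ equals $0$ and the other equals $Q$; combined with the value of $(a_3,\mu)_F$, this forces either $(a_1a_3,\mu)_F=Q$ or $(a_2a_3,\mu)_F=Q$, so that $F(\sqrt{a_1a_3})$ or $F(\sqrt{a_2a_3})$ splits $Q$, contradicting hypothesis~(2). The only obstacle is this routine four-case enumeration; a subsidiary check is that $\hat Q=Q\otimes_F\hat F$ remains a division algebra, which is standard for iterated Laurent series extensions and which ensures that the Hilbert symbol hypotheses stated over $F$ transfer unchanged to the $\hat F$-setting in which Proposition~\ref{key.prop} is applied.
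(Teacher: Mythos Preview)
Your proposal is correct and follows essentially the same route as the paper: compute the discriminant from Proposition~\ref{key.prop}(1), then use Proposition~\ref{key.prop}(2) to reduce to the four sign patterns $(\varepsilon_1,\varepsilon_2,\varepsilon)$ with $\varepsilon_1\varepsilon_2=-1$, and eliminate each by bimultiplicativity of the Hilbert symbol to produce a contradiction with hypothesis~(2). The only difference is cosmetic---the paper lists the four cases explicitly and pairs them off, and does not dwell on the passage from $F$ to $\hat F$ since Proposition~\ref{key.prop} already states its criterion in terms of $\mu\in F^\times$.
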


\begin{proof}
  Since $n$ is even, $\sigma$ has discriminant $a_1a_2$, hence the
  first condition guarantees that $Q$ is split by $Z$. It remains to
  prove that $Q$ has no improper similitudes.  By
  Proposition~\ref{key.prop}, this means we have to prove
  \[
  G_{\varepsilon_1}(a_1)\cap\ldots\cap
  G_{\varepsilon_n}(a_n)=\varnothing,
  \]
  for all $\varepsilon_1$, \ldots, $\varepsilon_n\in\{\pm1\}$ such
  that $\varepsilon_1\ldots\varepsilon_n=-1$.  Recall that
  $\mu\in G_+(a_i)$ (respectively $G_{-}(a_i)$) if and only if
  $(\mu,a_i)_F=0$ (respectively $(\mu,a_i)_F=Q$). Since $Q$ is
  a division algebra, it follows that $G_+(a_3)\cap
  G_{-}(a_3)=\varnothing$. Thus, if the
  intersection above is nonempty, then
  $\varepsilon_3=\dots=\varepsilon_n$. Since $n$ is even, we have
  $\varepsilon_3\dots\varepsilon_n=\varepsilon_3^{n-2}=1$. Therefore,
  it is enough to prove that the following intersections are empty:
  \[
  \text{(i)}\quad G_+(a_1)\cap G_-(a_2)\cap
  G_+(a_3)=\varnothing,\qquad \text{(ii)}\quad G_-(a_1)\cap
  G_+(a_2)\cap G_+(a_3)=\varnothing,
  \]
  \[
  \text{(iii)}\quad G_+(a_1)\cap G_-(a_2)\cap
  G_-(a_3)=\varnothing,\qquad \text{(iv)}\quad G_-(a_1)\cap
  G_+(a_2)\cap G_-(a_3)=\varnothing.
  \]
  Assume that some $\mu\in F^\times$ belongs to the intersection (i)
  (respectively (iv)). The two quaternion algebras
  $(\mu,a_1)_F=(\mu,a_3)_F$ are split (respectively equal to $Q$), while
  the third one is $(\mu,a_2)_F=Q$ (respectively is split). In each
  case, we get that $Q=(\mu,a_2a_3)_F$. This is impossible, since we
  assumed that $F(\sqrt{a_2a_3})$ does not split $Q$. Similarly, if
  $\mu$ belongs to the intersection (ii) or (iii), we get
  $Q=(\mu, a_1a_3)_F$, which again is impossible.
\end{proof}

The following example provides an explicit quaternion algebra $Q$
satisfying the conditions of Proposition~\ref{noimproperDeven}, hence
examples of groups $\gPGO^+(A,\sigma)$ of type $^2\sfD_n$ with
$n$ even, $n\geq3$, for which (Out~1) holds but not (Out~2).  

\begin{example}
  \label{ex:2Devenn}
  Consider a field $k$ of characteristic $\neq2$ such that
  $-1\in k^{\times2}$. Assume $k$ is the center of a quaternion
  division algebra $(a_1,a_2)_k$, and let $F=k(r,s,t)$ where $r$, $s$,
  and $t$ are independent indeterminates. Let $Q=(a_1,a_2)_F$ and
  $a_3=a_1r^2+a_2s^2+a_1a_2t^2\in F^\times$. Clearly, $Q$ is a
  quaternion division algebra containing pure quaternions $q_1$,
  $q_2$, $q_3$ with $q_i^2=a_i$ for $i=1$, $2$, $3$. Since
  $-1\in F^{\times2}$, the algebra $Q=(a_1,a_2)_F=(a_1,a_1a_2)_F$ is split by
  $F(\sqrt{a_1a_2})$. 
  If $Q$ is split by $F(\sqrt{a_1a_3})$, then $a_1a_3$ is represented
  over $F$ by the quadratic form $\qf{a_1,a_2,a_1a_2}$, hence (after
  scaling by $a_1$) $a_3$ is represented by $\qf{1,a_2,a_1a_2}$ over
  $F$. Because $r$, $s$, $t$ are indeterminates, Pfister's subform
  theorem \cite[Th.~IX.2.8]{Lam} shows that this condition implies
  that $\qf{a_1,a_2,a_1a_2}\simeq\qf{1,a_2,a_1a_2}$ over $k$, hence
  (by Witt's cancellation theorem or by comparing discriminants)
  $a_1\in k^{\times2}$. This is impossible since $(a_1,a_2)_k$ is a
  division algebra.
  Similarly, if $Q$ is split by $F(\sqrt{a_2a_3})$, then $a_2a_3$ is
  represented by $\qf{a_1,a_2,a_1a_2}$ over $F$, hence $a_3$ is
  represented by $\qf{1,a_1,a_1a_2}$ over $F$, and
  $\qf{a_1,a_2,a_1a_2}\simeq\qf{1,a_1,a_1a_2}$ over $k$, a
  contradiction since $a_2\notin k^{\times2}$. 
  Hence, the quaternion algebra $Q$ satisfies the conditions of
  Proposition~\ref{noimproperDeven}.  
\end{example}

\subsubsection{Type $^2\sfD_n$, with $n$ odd.}
\label{Doddexample.section}
We again use the orthogonal generic sums
defined in \S\ref{sec:genericsum}. More precisely, we have the
following:  

\begin{prop} 
\label{noimproperDodd}
Let $Q$ be a quaternion division algebra. Assume $Q$ contains pure
quaternions $q_1$, $q_2$, $q_3$ with respective squares $a_1$, $a_2$,
and $a_3$ such that  
\begin{enumerate}
\item $Q$ is split by $F(\sqrt{a_1a_2a_3})$; 
\item There is no $\mu\in F^\times$ such that
  $Q=(a_1,\mu)_F=(a_2,\mu)_F=(a_3,\mu)_F$.  
\end{enumerate}
Consider the involution $\sigma$ of $A=M_n(\hat Q)$, with $n$
odd, $n\geq3$, defined as in Proposition~\ref{key.prop}, with  $q_1$,
$q_2$, $q_3$ as above and $q_4=\dots=q_n=q_3$. 
This involution admits no square-central improper similitudes, yet $Q$
is split by the discriminant quadratic extension $Z/F$.  
Moreover, if in addition $-1\notin \nrd_Q(Q^\times)$, then $\sigma$ has no
improper similitudes.  
\end{prop}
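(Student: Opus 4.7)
The plan is to apply Proposition~\ref{key.prop} to the involution $\sigma$ adjoint to the generic sum $\hat h=\qf{t_1q_1,t_2q_2,t_3q_3,t_4q_3,\dots,t_nq_3}$.

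First, I would compute the discriminant. By Proposition~\ref{key.prop}(1), $\disc\sigma=a_1a_2\,a_3^{n-2}\cdot\hat F^{\times2}$; since $n$ is odd, $n-2$ is also odd, so $a_3^{n-2}$ differs from $a_3$ by a square, and $\disc\sigma=a_1a_2a_3\cdot\hat F^{\times2}$. Hence $Z=\hat F(\sqrt{a_1a_2a_3})$, and condition~(1) directly gives that $Z$ splits $\hat Q$.

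Next, for the absence of square-central improper similitudes, I would apply Proposition~\ref{key.prop}(3). Because $a_4=\dots=a_n=a_3$, the intersection appearing there collapses to $G_-(a_1)\cap G_-(a_2)\cap G_-(a_3)$. An element of this intersection is exactly a $\mu\in F^\times$ with $(a_i,\mu)_F=Q$ for $i=1,2,3$, which condition~(2) forbids.

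The crucial step is ruling out \emph{all} improper similitudes under the extra hypothesis $-1\notin\nrd_Q(Q^\times)$. I would apply Proposition~\ref{key.prop}(2): the existence of an improper similitude requires signs $\varepsilon_i\in\{\pm1\}$ with $\prod\varepsilon_i=-1$ and some $\mu\in\bigcap_i G_{\varepsilon_i}(a_i)$. The key observation is that $G_+(a)\subseteq\nrd_Q(Q^\times)$, because $G_+(a)$ is the image of the norm $N_{F(\sqrt a)/F}$ and $F(\sqrt a)$ embeds in $Q$, while $G_-(a)\subseteq -\nrd_Q(Q^\times)$, because $(a,\mu)_F=Q$ produces a pure quaternion $y\in Q$ with $y^2=\mu$, whence $\mu=-\nrd_Q(y)$. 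Under the hypothesis $-1\notin\nrd_Q(Q^\times)$ these two subsets of $F^\times$ are disjoint, so a given $\mu$ cannot lie simultaneously in some $G_+(a_i)$ and some $G_-(a_j)$. This forces all the $\varepsilon_i$ to coincide, and since $n$ is odd, the product condition $\prod\varepsilon_i=-1$ then forces $\varepsilon_i=-1$ for every $i$; we are back in the square-central case already ruled out.

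The main obstacle is recognizing the inclusions $G_\pm(a)\subseteq\pm\nrd_Q(Q^\times)$ and seeing that the hypothesis $-1\notin\nrd_Q(Q^\times)$ is precisely what is needed to force sign uniformity among the $\varepsilon_i$; once that is in place, the argument is a clean bookkeeping reduction to the square-central statement.
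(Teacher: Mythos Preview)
Your argument is correct and follows essentially the same approach as the paper: both rely on Proposition~\ref{key.prop} together with the observation that $G_+(a)\subseteq\nrd_Q(Q^\times)$ and $G_-(a)\subseteq-\nrd_Q(Q^\times)$, which under the hypothesis $-1\notin\nrd_Q(Q^\times)$ rules out any mixed sign pattern among the $\varepsilon_i$. The paper first reduces to four explicit intersections (using $G_+(a_3)\cap G_-(a_3)=\varnothing$ to force $\varepsilon_3=\cdots=\varepsilon_n$) and then treats the three mixed cases one at a time, whereas you state the inclusions once and dispose of all mixed configurations simultaneously; your packaging is a bit cleaner, but the substance is identical.
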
  

\begin{proof} 
Since $n$ is odd, $\sigma$ has discriminant $a_1a_2a_3$. Therefore
condition (1) guarantees that $Q$ is split by the discriminant
quadratic algebra $Z$.  
Moreover, arguing as in the proof of
Proposition~\ref{noimproperDeven}, and taking into account the fact
that $n$ is now odd, we get that $\sigma$ has improper similitudes if
and only if one of the following intersections is nonempty:  
\[
  \text{(i)}\quad G_+(a_1)\cap G_+(a_2)\cap G_-(a_3),\qquad
  \text{(ii)}\quad G_+(a_1)\cap G_-(a_2)\cap G_+(a_3),
  \]
  \[
  \text{(iii)}\quad G_-(a_1)\cap G_+(a_2)\cap G_+(a_3),\qquad
  \text{(iv)}\quad G_-(a_1)\cap G_-(a_2)\cap G_-(a_3).
  \]
  In addition, we know by Proposition~\ref{key.prop} that $\sigma$ has
  a square-central improper similitude if and only if the fourth
  intersection is nonempty, or equivalently, if there exists $\mu\in
  F^\times$ such that $Q=(\mu,a_i)_F$ for $i=1$, $2$, $3$. This is
  impossible by condition~(2).  
  
If the involution $\sigma$ has an improper similitude, then one of the
intersections (i), (ii) or (iii) is nonempty. So assume for instance
there exists $\mu\in F^\times$ such that $Q=(\mu,a_3)_F$ and
$(\mu,a_1)_F=(\mu,a_2)_F=0$. The first equation shows that there exists a
pure quaternion $z$ such that $\mu=z^2=-\nrd_Q(z)$. On the other hand,
since $(\mu,a_1)_F=0$, there exists a quaternion $z'\in F(q_1)$ such
that $\mu=N_{F(q_1)/F}(z')=\nrd_Q(z')$. Therefore, both $\mu$ and
$-\mu$ are reduced norms, and it follows $-1$ also is a reduced
norm. This concludes the proof of the proposition.   
\end{proof}

Adapting a construction from \cite{ELTW} (see also
\cite[\S10.2.2]{TW}), we now describe an explicit example of a
quaternion algebra satisfying the conditions of
Proposition~\ref{noimproperDodd}, and we use it to give examples of
groups of type $^2\sfD_n$, with $n$ odd, satisfying (Out~1) and not
(Out~2), or (Out~2) and not (Out~3).  

\begin{example}
\label{noimproperlemma:ex}
Let $k$ be an arbitrary field of
characteristic~$0$, and let $F=k(a_1,a_2)$, where $a_1$ and $a_2$ are
independent indeterminates. Consider the quaternion division algebra
$Q=(a_1,a_2)_F$, and let
\begin{equation}
\label{eq:a3}
a_3=a_1\bigl((1-a_1)^2(1+a_2)^2-4(1-a_1)a_2\bigr).
\end{equation}
The algebra $Q$ satisfies the conditions~(1) and~(2) of
Proposition~\ref{noimproperDodd}.  
\end{example}

\begin{proof}
It is clear that $Q$ contains pure quaternions $q_1$, $q_2$ with
$q_1^2=a_1$ and $q_2^2=a_2$.
Computation yields
  \[
  (1-a_1)^2(1+a_2)^2-a_1^{-1}a_3=4(1-a_1)a_2,
  \]
  hence the quaternion algebra $(a_1^{-1}a_3,(1-a_1)a_2)_F$ is
  split. Therefore,
  \[
  (a_3,(1-a_1)a_2)_F\simeq(a_1,(1-a_1)a_2)_F\simeq Q,
  \]
  and it follows that $Q$ contains a pure quaternion $q_3$ with
  $q_3^2=a_3$.

  Another computation yields
  \[
  (1-a_1)^2(1-a_2)^2-a_1^{-1}a_3=4a_1(1-a_1)a_2,
  \]
  hence the quaternion algebra $(a_1^{-1}a_3,a_1(1-a_1)a_2)_F$ is
  split. Since we already observed that $(a_1^{-1}a_3,(1-a_1)a_2)_F$
  is split, it follows that $(a_1^{-1}a_3,a_1)_F$ is split, hence
  \begin{equation}
    \label{eq:ELTW1}
    (a_1,a_3)_F\simeq(a_1,a_1)_F\simeq(a_1,-1)_F.
  \end{equation}
  We thus see that $(a_1,-a_3)_F$ is split, hence
  \[
  Q\simeq(a_1,-a_2a_3)_F\simeq(a_1,a_1a_2a_3)_F.
  \]
  Therefore, $Q$ is split by $F(\sqrt{a_1a_2a_3})$.

  Suppose now that there exists some $\mu\in F^\times$ such that 
  \begin{equation}
    \label{eq:ELTW}
   Q=(a_1,\mu)_F=(a_2,\mu)_F=(a_3,\mu)_F.
    \end{equation} 
    To
  obtain a contradiction, we use valuation theory as in
  \cite[\S10.2.2]{TW}: since $\charac k=0$, we may find on $k$ a
  dyadic valuation $v_0$, with value group some ordered group
  $\Gamma$ and residue field $\overline{k}$ of
  characteristic~$2$. Consider the Gaussian extension $v_1$ of $v_0$
  to $F$, with value group $\Gamma$ and residue field
  $\overline{k}(a_1,a_2)$, and let $v$ be the valuation on $F$
  obtained by composing $v_1$ with the $(1-a_1)$-adic valuation on
  $\overline{k}(a_1,a_2)$. The value group of $v$ is
  $\mathbb{Z}\times\Gamma$ with the right-to-left lexicographic
  ordering, and the residue field is $\overline{k}(a_2)$. It is clear
  that $v$ extends uniquely to $F(\sqrt{a_2})$, and this extension is
  unramified with a purely inseparable residue field extension. In
  \cite[p.~509]{TW}, it is shown that $v$ also extends uniquely to
  $F(\sqrt{a_1})$ and $F(\sqrt{a_1a_3})$, and that these
  extensions are totally ramified.

  Now, since $Q\simeq(a_2,-a_1a_2)_F$ and \eqref{eq:ELTW} holds, we
  see that $-a_1a_2\mu$ is a norm from $F(\sqrt{a_2})$. Because
  $F(\sqrt{a_2})$ is an unramified extension of $F$, it follows that
  $v(-a_1a_2\mu)\in 2v(F^\times)$. Scaling $\mu$ by the square of an
  element in $F^\times$, we may assume $v(-a_1a_2\mu)=0$ and take the
  residue
  $\overline{-a_1a_2\mu}=a_2\overline{\mu}\in\overline{k}(a_2)$. 
  (We can omit the sign, since $\overline k$ has characteristic $2$.)
  Since 
  $Q\simeq(a_1,-a_1a_2)_F$, we also derive from \eqref{eq:ELTW} that
  $-a_1a_2\mu$ is a norm from $F(\sqrt{a_1})$. As $F(\sqrt{a_1})$ is
  totally ramified over $F$, it follows that $\overline{-a_1a_2\mu}\in
  \overline{F}^{\times2}$, hence
  $a_2\overline{\mu}\in\overline{k}^2(a_2^2)$. But \eqref{eq:ELTW}
  also shows that $(a_1a_3,\mu)_F$ is split, hence $\mu$ is a norm
  from the totally ramified extension $F(\sqrt{a_1a_3})$, and
  therefore $\overline{\mu}\in\overline{k}^2(a_2^2)$. We thus reach
  the conclusion that $a_2\in\overline{k}^2(a_2^2)$, a contradiction.
\end{proof}

\begin{cor}
  \label{-1square:cor}
  Let $Q$ be the quaternion algebra of
  Example~\ref{noimproperlemma:ex} and $q_1$, $q_2$, $q_3\in Q$ be
  pure quaternions satisfying $q_i^2=a_i$ for $i=1$, $2$, $3$. Fix an
  odd integer $n\geq3$ and consider as in Proposition~\ref{key.prop}
  \[
  \hat F=F((t_1))\ldots((t_n)),\qquad \hat Q=Q\otimes_F\hat F, \qquad
  A=M_n(\hat Q),
  \]
  and $\sigma$ the involution on $A$ adjoint to the skew-hermitian
  form $\hat h=\qf{t_1q_1,\ldots,t_nq_n}$ with $q_4=\dots=q_n=q_3$. The
  group $\gPGO^+(A,\sigma)$ satisfies (Out~1) but not (Out~3), and it
  satisfies (Out~2) if and only if $-1\in k^{\times2}$.
\end{cor}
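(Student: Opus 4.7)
The plan is to apply Proposition~\ref{noimproperDodd} and Proposition~\ref{key.prop} to the data from Example~\ref{noimproperlemma:ex}. Since that example verifies both conditions of Proposition~\ref{noimproperDodd}, the proposition immediately yields that $Q$ is split by the discriminant quadratic extension of~$\sigma$ (establishing (Out~1)) and that $\sigma$ admits no square-central improper similitudes (ruling out (Out~3)). What remains is the equivalence between (Out~2) and the condition $-1\in k^{\times 2}$.

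For the forward direction, assuming $-1\in k^{\times 2}$, I propose to use $\mu = a_1$ as a witness for Proposition~\ref{key.prop}(2). The three Hilbert symbols to compute are $(a_1,a_1)_F = (a_1,-1)_F = 0$ by the standard identity $(a,a)=(a,-1)$ together with the hypothesis, $(a_1,a_2)_F = [Q]$ by definition, and $(a_1,a_3)_F = (a_1,-1)_F = 0$ by~\eqref{eq:ELTW1}. Hence $a_1\in G_+(a_1)\cap G_-(a_2)\cap G_+(a_3)$; since $q_4=\cdots=q_n=q_3$, also $a_1\in G_+(a_i)$ for all $i\geq 4$. The sign vector $(+,-,+,\ldots,+)$ has product~$-1$ because $n$ is odd, so Proposition~\ref{key.prop}(2) produces an improper similitude and (Out~2) holds.

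Conversely, assume $-1\notin k^{\times 2}$. The plan is to show $-1\notin\nrd_Q(Q^\times)$, whereupon the last sentence of Proposition~\ref{noimproperDodd} rules out improper similitudes, so (Out~2) fails. Representability of~$-1$ by the reduced norm of $Q$ amounts to isotropy of the $5$-dimensional quadratic form $\qf{1,-a_1,-a_2,a_1a_2,1}$ over $F=k(a_1,a_2)$. The strategy is to apply Springer's theorem at the $a_2$-adic completion $k(a_1)((a_2))$: it suffices to show that the two residue forms $\qf{1,-a_1,1}$ and $\qf{-1,a_1}$ over $k(a_1)$ are both anisotropic. The second is immediate because $a_1$ is transcendental over~$k$.

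The main obstacle is the first residue form. Its isotropy would force either $-1$ to be a square in $k(a_1)$, hence in~$k$ (contradicting the hypothesis), or else $a_1$ to be a sum of two squares in $k(a_1)$. To exclude the latter, one brings the representation into the polynomial ring $k[a_1]$ by Cassels--Pfister; a direct degree comparison on $a_1 = p(a_1)^2 + q(a_1)^2$ then shows that the top-degree terms of $p^2$ and $q^2$ can cancel only if $-1$ is a square in~$k$, yielding the desired contradiction.
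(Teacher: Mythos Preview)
Your proof is correct and follows essentially the same route as the paper: invoke Proposition~\ref{noimproperDodd} for (Out~1) and the failure of (Out~3), exhibit $\mu=a_1$ to witness (Out~2) when $-1\in k^{\times2}$, and show $-1\notin\Nrd_Q(Q^\times)$ when $-1\notin k^{\times2}$. Two minor remarks: the sign product $\varepsilon_1\cdots\varepsilon_n=-1$ holds simply because there is exactly one minus sign, not ``because $n$ is odd''; and the paper disposes of the norm-form step in one line (``since $a_1,a_2$ are indeterminates''), whereas you spell out a Springer/Cassels--Pfister argument---your version is more explicit but not logically different.
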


Therefore, depending on the base field $k$ we started with, we get the
required examples.  

\begin{proof}
  Proposition~\ref{noimproperDodd}, together with
  Proposition~\ref{orthogonal.prop}, already shows that the group
  $\gPGO^+(A,\sigma)$ satisfies (Out~1) and not (Out~3), and that it
  does not satisfy (Out~2) if $-1\notin\Nrd_Q(Q^\times)$. Therefore,
  it only remains to show that $-1$ is not a reduced norm of $Q$ if
  $-1\notin k^{\times2}$, and that $\sigma$ admits improper
  similitudes if $-1\in k^{\times2}$.

  The first part is clear: the reduced norm of $Q$ is the quadratic
  form
  $$n_Q\simeq\qf{1,-a_1,-a_2,a_1a_2}$$ over $F=k(a_1,a_2)$. Since
  $a_1$ and $a_2$ 
  are indeterminates, this quadratic form represents $-1$ if and only
  if $-1\in k^{\times2}$.

  Now, assume that $-1\in k^{\times2}$. Since
  $Q=(a_1,a_2)_F$, we have $a_1\in 
  G_-(a_2)$. Moreover, because $-1\in k^{\times2}$ the quaternion algebras
  $(a_1,a_1)_F$ and $(a_1,a_3)_F$ are split (see~\eqref{eq:ELTW1}), so
  $a_1\in G_+(a_1)\cap G_+(a_3)$. Therefore,
  \[
  a_1\in G_+(a_1)\cap G_-(a_2)\cap G_+(a_3).
  \]
  Proposition~\ref{key.prop} then shows that $\sigma$ admits improper
  similitudes. 
\end{proof}

\begin{remarkk}
  \label{rem:A3D3}
  As shown in \cite[\S15.D]{KMRT},
  the Clifford algebra construction defines an equivalence of
  categories from the groupoid $\sfD_3(F)$ to the groupoid
  $\sfA_3(F)$. For any central simple algebra $A$ of degree~$6$ with
  orthogonal involution $\sigma$ over a
  field of characteristic different from~$2$, the Clifford algebra
  $C(A,\sigma)$ has degree~$4$ and carries a canonical unitary
  involution $\underline\sigma$, and we have canonical isomorphisms
  (see \cite[(15.26), (15.27)]{KMRT})
  \[
  \gSpin(A,\sigma)\simeq\gSU(C(A,\sigma),\underline\sigma),\qquad
  \gPGO^+(A,\sigma)\simeq\gPGU(C(A,\sigma),\underline\sigma).
  \]
  Therefore, Corollary~\ref{-1square:cor} with $n=3$
  readily yields examples of groups of type $^2\sfA_3$ that
  satisfy~(Out~1) but not (Out~2), or (Out~2) but not (Out~3). 
  In particular, by Proposition~\ref{descent1.prop}, it also provides
  examples of unitary involutions that do not have a descent. In view
  of Theorem~\ref{descent.thm}, we know that the algebra $C(A,\sigma)$
  in these examples is a division algebra of degree $4$.   
\end{remarkk}

For use in \S\ref{sec:outunitex}, we still make a few observations on
the square-central similitudes of the skew-hermitian form of
Corollary~\ref{-1square:cor} in the particular case where $n=3$, i.e.,
\[
\hat h=\qf{t_1q_1,t_2q_2,t_3q_3}
\]
with $q_1$, $q_2$, $q_3$ as in Example~\ref{noimproperlemma:ex}.

\begin{lemma}
  \label{lem:propersimhath}
  Assume $-1\in k^{\times2}$.
  Every square-central similitude $g$ of $\hat h$ is proper and satisfies
  \[
  g^2=\mu(g)\in \hat F^{\times2}.
  \]
\end{lemma}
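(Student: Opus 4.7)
The first claim is immediate from earlier results: Corollary~\ref{-1square:cor} shows that $\gPGO^+(A,\sigma)$ fails (Out~3), so Proposition~\ref{orthogonal.prop}(3) implies $(A,\sigma)$ admits no square-central improper similitude. Hence every square-central $g\in\Sim(\hat V,\hat h)$ is automatically proper.

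For the equation $g^2=\mu(g)\in\hat F^{\times2}$, the plan is to transport the problem to the graded setting. Scaling $g$ by $\lambda\in\hat F^\times$ preserves both square-centrality and properness, and replaces $(\mu(g),g^2)$ by $(\lambda^2\mu(g),\lambda^2g^2)$; so Theorem~\ref{gensummainthm}(1) reduces the problem to the case $\mu(g)\in F^\times$. Lemma~\ref{prop:sim2} then yields $\tilde g=g_1\oplus g_2\oplus g_3$ with $g_i\in\Sim(V_i,h_i)$ of common multiplier $\mu(g)$. Since $g^2\in\hat F^\times$, the identity $\widetilde{g^2}=\tilde g^2$ forces the latter to act as scalar multiplication by some common $c\in F^\times$ on each $\gr(\hat V_i)$, whence $g_1^2=g_2^2=g_3^2=c$.

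The remaining work is a case analysis using Lemma~\ref{lem:sim1dim}: each $g_i$ is either (a) an element of $F^\times$ (giving $g_i^2=\mu(g)$), (b) an element of $F^\times q_i$ (giving $g_i^2=-\mu(g)$), or (c) an improper pure quaternion anticommuting with $q_i$ (giving $g_i^2=\mu(g)$). Requiring $g_1^2=g_2^2=g_3^2$ leaves two possibilities. In the ``all (b)'' case, one gets $a_i/a_j\in F^{\times2}$ for all $i,j$, so $Q=(a_1,a_1)_F=(a_1,-1)_F$; combined with $-1\in k^{\times2}$, this forces $Q$ to split, contradicting that $Q$ is a division algebra. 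Otherwise $c=\mu(g)$ and each $g_i$ is of type (a) or (c); but (a) forces $\mu(g)\in F^{\times2}$, which is incompatible with any $g_j$ of type (c) (for which $(a_j,\mu(g))_F=Q$). So all $g_i$ are of the same type, and the ``all (c)'' scenario is ruled out by the parity condition of Lemma~\ref{lem:propsum}: since $g$ is proper, the number of improper $g_i$ is even, hence $\neq3$. Thus every $g_i$ lies in $F^\times$, so $\tilde g^2=\mu(g)\in F^{\times2}$. Finally, from $\sigma(g)g=\mu(g)$ and $g^2\in\hat F^\times$ we have $g^2=\pm\mu(g)$; the equality $\widetilde{g^2}=\mu(g)$ in $F^\times$ and $\charac F\neq2$ rule out the minus sign, so $g^2=\mu(g)$. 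Undoing the scaling yields $g^2=\mu(g)\in\hat F^{\times2}$ for the original $g$.

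The main technical obstacle is the case analysis ruling out the ``all (b)'' configuration and the mixing of types (a) and (c); both depend crucially on the specific structural properties of $Q=(a_1,a_2)_F$ provided by Example~\ref{noimproperlemma:ex} together with the assumption $-1\in k^{\times2}$, which together guarantee that neither $a_i$ nor $a_ia_j$ lies in $F^{\times2}$ and that $Q$ does not split.
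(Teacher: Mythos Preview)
Your proof is correct and follows the same overall plan as the paper: scale so that $\mu(g)\in F^\times$, pass to the graded module via Lemma~\ref{prop:sim2} to get $\tilde g=g_1\oplus g_2\oplus g_3$ with $g_i^2$ all equal, and then perform a case analysis on the square-central similitudes of each $\qf{q_i}$.

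The organizing principle of your case analysis is different, however, and in fact cleaner. The paper splits according to which $g_i$ are proper or improper, which leaves three ``one proper, two improper'' configurations; ruling these out requires specific arithmetic facts about $a_1,a_2,a_3$ from Example~\ref{noimproperlemma:ex}, namely that $(a_1,a_3)_F$ is split (via~\eqref{eq:ELTW1}) and that $(a_1a_3,a_2)_F$ is nonsplit (via a ramification argument). You instead split first according to the sign $g_i^2/\mu(g_i)\in\{\pm1\}$, which immediately forces the dichotomy ``all of type~(b)'' versus ``all of type~(a) or~(c)''. The ``all~(b)'' case collapses because it makes $a_1\equiv a_2\bmod F^{\times2}$, hence $Q=(a_1,a_1)_F=(a_1,-1)_F$ splits; mixing~(a) with~(c) is impossible because~(a) forces $\mu(g)\in F^{\times2}$ while~(c) forces $(a_j,\mu(g))_F=Q$; and ``all~(c)'' is excluded by the parity argument, since $g$ is already known to be proper. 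Thus the only input you need beyond Corollary~\ref{-1square:cor} (for properness) is that $Q=(a_1,a_2)_F$ is a division algebra and $-1\in k^{\times2}$, whereas the paper's route uses finer information about $a_3$. Both arguments end at the same place: each $g_i\in F^\times$, whence $g^2=\mu(g)\in F^{\times2}$.

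One small comment: when you invoke Lemma~\ref{lem:propsum} for the parity of improper $g_i$, you are implicitly using that $g$ is proper if and only if $\tilde g$ is, and then applying the reduced-norm computation to $\tilde g=g_1\oplus g_2\oplus g_3$ in the graded algebra. This is exactly what the paper does in the proof of Proposition~\ref{key.prop}, so it is legitimate, but it is worth saying explicitly.
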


\begin{proof}
  Let $g^2=\lambda\in\hat F^\times$. We have
  $\lambda^2=\mu(g^2)=\mu(g)^2$, hence $\lambda=\pm\mu(g)$. Scaling
  $g$, we may assume by Theorem~\ref{gensummainthm} that $\mu(g)\in
  F^\times$, hence also $\lambda\in F^\times$. By
  Proposition~\ref{prop:sim2} we then have
  \[
  \tilde g=g_1\oplus g_2\oplus g_3
  \]
  for some $g_i\in\Sim(h_i)$ with $\mu(g)=\mu(g_1)=\mu(g_2)=\mu(g_3)$
  and $\lambda=\tilde g^2=g_1^2=g_2^2=g_3^2$. By
  Example~\ref{noimproperlemma:ex} and
  Proposition~\ref{noimproperDodd} the similitude $g$ 
  must be proper since it is square-central. Therefore, the
  number of improper similitudes among $g_1$, $g_2$, $g_3$ is even, so
  at least one of $g_1$, $g_2$, $g_3$ is a proper similitude. If $g_i$
  is proper, then $g_i\in F(q_i)^\times$. Since $g_i$ is
  square-central, it follows that $g_i\in F^\times\cup q_i F^\times$,
  hence $g_i^2\in F^{\times2}\cup a_i F^{\times2}$ and
  $\mu(g_i)=\Nrd_Q(g_i)\in F^{\times2}\cup(-a_i) F^{\times2}$. If
  $g_i$ is improper, then $\bigl(a_i,\mu(g_i)\bigr)_F\simeq Q$: see
  Lemma~\ref{lem:sim1dim}. We now consider the various possibilities:
  \smallbreak\par\noindent
  (1) If $g_1$ is proper and $g_2$, $g_3$ are improper: then
  $\mu(g)\in F^{\times2}\cup(-a_1)F^{\times2}$ and
  $\bigl(a_2,\mu(g)\bigr)_F\simeq \bigl(a_3,\mu(g)\bigr)_F\simeq
  Q$. Since $-1\in k^{\times2}$, the quaternion algebra $(a_3,-a_1)_F$
  is split (see \eqref{eq:ELTW1}) whereas $Q$ is not split, so this
  case is impossible.
  \smallbreak\par\noindent
  (2) If $g_2$ is proper and $g_1$, $g_3$ are improper: then
  $\mu(g)\in F^{\times2}\cup(-a_2)F^{\times2}$ and
  $\bigl(a_1,\mu(g)\bigr)_F\simeq \bigl(a_3,\mu(g)\bigr)_F\simeq
  Q$. Since $Q$ is not split, we must have $\mu(g)\in
  (-a_2)F^{\times2}=a_2F^{\times 2}$, and we get
  $(a_1,a_2)_F=(a_3,a_2)_F$, hence $(a_1a_3,a_2)_F$ is split. By 
  definition of $a_3$ (see \eqref{eq:a3}), this means that the
  quaternion algebra
  \[
  \bigl((1-a_1)((1-a_1)(1+a_2)^2-4a_2),\,a_2\bigr)_F
  \]
  is split. This is a contradiction, since this quaternion algebra is
  ramified for the $(1-a_1)$-adic valuation.
  \smallbreak\par\noindent
  (3) If $g_3$ is proper and $g_1$, $g_2$ are improper: this case is
  excluded just like the previous two, because the quaternion algebra
  $(a_1,a_3)_F$ is split.
  \smallbreak\par\noindent
  The only remaining case is when $g_1$, $g_2$, and $g_3$ are proper,
  hence $\mu(g_i)\in F^{\times2}\cup(-a_i)F^{\times2}$ for each
  $i$. Since $a_1$, $a_2$, and $a_3$ are in different square classes
  and $\mu(g_1)=\mu(g_2)=\mu(g_3)$, it follows that $\mu(g_i)\in
  F^{\times2}$, hence $g_i\in F^\times$ for all $i$. Then
  $\lambda=g_i^2=\mu(g_i)$ for all $i$, hence $g^2=\mu(g)\in F^{\times2}$.
\end{proof}

\section{Outer automorphisms and similitudes: the unitary case}
\label{sec:outunit}

We now turn to the results concerning unitary groups. We already gave
in Remark~\ref{rem:A3D3} examples of groups of type $^2\sfA_3$
satisfying (Out~1) but not (Out~2), or satisfying (Out~2) but not
(Out~3). The other examples we will provide are of the form
$\gPGU(B,\tau)$ with $B$ of index~$2$.
Unitary involutions on algebras of index~$2$ are examined
in detail in \S\ref{subsec:unitsim}, and the 
examples are given in \S\ref{sec:outunitex}. They are based on a generic
construction of hermitian forms of unitary type which is discussed for
division algebras of arbitrary index in \S\ref{sec:genunit}.

The characteristic is arbitrary in
\S\ref{subsec:unitsim}; it is assumed to be different from~$2$ in
\S\ref{sec:genunit} and \S\ref{sec:outunitex}.

\subsection{Similitudes for unitary hermitian forms over a quaternion algebra}
\label{subsec:unitsim}

Let $Q$ be a quaternion division algebra over a field $K$ of arbitrary
characteristic, which is a quadratic separable extension of some
subfield $F$. We write $\iota$ for the nontrivial automorphism of $K$
over $F$.
Let $(B,\tau)$ be an algebra with unitary involution Brauer-equivalent
to $Q$.  We have seen in \S\,\ref{unitary.sec} that outer
automorphisms of $\gPGU(B,\tau)$ are given by $\iota$-semilinear
automorphisms of $(B,\tau)$. In this section, we describe them
explicitly in terms of the underlying hermitian space.

Let $U$ be a
finite-dimensional right $Q$-vector space such that $B=\End_QU$. 
By a theorem of Albert \cite[(2.22)]{KMRT}, unitary involutions on
$B$ exist only if $Q$ has a descent to $F$. We
fix a quaternion $F$-subalgebra $Q_0\subset Q$ and identify
$Q=Q_0\otimes_FK$. Let also $U_0\subset U$ be a $Q_0$-subspace of $U$
such that $U=U_0\otimes_FK$. Thus, $Q_0$ and $U_0$ are the fixed
$F$-algebra and $Q_0$-subspace of the following $\iota$-semilinear
automorphisms of $Q$ and $U$:
\[
\iota_Q=\Id_{Q_0}\otimes\iota,\qquad \iota_U=\Id_{U_0}\otimes\iota.
\]
Similarly, $\End_{Q_0}U_0$ is the $F$-algebra fixed under the
$\iota$-semilinear automorphism of $\End_QU$ that maps $f\in\End_QU$
to the endomorphism $f^\iota$ defined by
\[
f^\iota(x)=\iota_U\bigl(f(\iota_U(x))\bigr)\qquad\text{for all $x\in U$.}
\]
The canonical involution $\ba$ on $Q$ commutes with $\iota_Q$ because
for $x\in Q$
\[
\iota_Q(\overline
x)=\iota_Q(\Trd_Q(x)-x)=\Trd_Q(\iota_Q(x))-\iota_Q(x) =
\overline{\iota_Q(x)}.
\]
Let $\theta=\ba\circ\iota_Q$, a unitary involution on $Q$ which
restricts to the canonical involution on $Q_0$. The
unitary involution $\tau$ on $B=\End_QU$ is the adjoint involution
$\tau=\ad_h$ for some nondegenerate hermitian form 
$h\colon U\times U\to Q$ with respect to $\theta$. 

A conjugate
hermitian form $h^\iota$ is defined on $U$ by
\[
h^\iota(x,y)=\iota_Q\bigl(h(\iota_U(x),\iota_U(y))\bigr)
\quad\text{for $x$, $y\in U$.}
\]
It is readily verified that the adjoint involutions of $h$ and
$h^\iota$ are related as follows:
\begin{equation}
  \label{eq:1}
  \ad_{h^\iota}(f)^\iota = \ad_h(f^\iota)\qquad\text{for all $f\in\End_QU$.}
\end{equation}
We define a map $g\in\End_QU$ to be a \emph{similitude} $(U,h)\to
(U,h^\iota)$ if there exists $\mu\in F^\times$ such that
\[
h^\iota\bigl(g(x),g(y)\bigr)=\mu\,h(x,y)\qquad\text{for all $x$, $y\in
  U$.}
\]
The factor $\mu$ is said to be the \emph{multiplier} of $g$. We write
$\mu(g)$ for the multiplier of $g$, and $\Sim(U,h,h^\iota)$ or
$\Sim(h,h^\iota)$ for the 
set of similitudes $(U,h)\to(U,h^\iota)$. 
\begin{prop}
  \label{prop:outunitex}
  Every $\iota$-semilinear automorphism $\varphi$ of the algebra with unitary involution $(B,\tau)$ has
  the form $\varphi\colon f\mapsto gf^\iota g^{-1}$ for some 
  $g\in \Sim(U,h,h^\iota)$. This
  automorphism $\varphi$ has order~$2$ if and only if
  $g\,g^\iota\in F^\times$.
\end{prop}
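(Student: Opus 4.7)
The plan is to handle the two claims separately: first, produce the form $\varphi(f) = g f^\iota g^{-1}$ with $g$ a similitude; then compute $\varphi^2$ to extract the order-$2$ criterion. Observe that $\iota_B\colon f \mapsto f^\iota$ is itself an $\iota$-semilinear $F$-algebra automorphism of $B$ (induced by the $\iota_Q$-semilinear bijection $\iota_U$ of $U$), so for any $\iota$-semilinear $F$-algebra automorphism $\varphi$ of $B$, the composite $\varphi \circ \iota_B$ is $K$-linear. Applying Skolem--Noether to the central simple $K$-algebra $B$ yields $g \in B^\times$ with $\varphi \circ \iota_B = \Int(g)$, and hence $\varphi(f) = g f^\iota g^{-1}$.

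To translate the condition $\varphi \tau = \tau \varphi$ into a condition on $g$, I would pass to the underlying space by setting $\tilde\varphi := g \circ \iota_U$, an $\iota_Q$-semilinear bijection $U \to U$ satisfying $\varphi(f) = \tilde\varphi f \tilde\varphi^{-1}$. Because $\tilde\varphi$ is only semilinear, the naive pullback $(x,y) \mapsto h(\tilde\varphi(x), \tilde\varphi(y))$ is not $\theta$-hermitian; however, post-composing with $\iota_Q$ corrects this, thanks to the identity $\iota_Q \circ \theta \circ \iota_Q = \theta$ (immediate from $\theta = \ba \circ \iota_Q$ and the commutation of $\ba$ with $\iota_Q$). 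Consequently
\[
H(x,y) := \iota_Q\bigl(h(\tilde\varphi(x), \tilde\varphi(y))\bigr)
\]
is a nondegenerate $\theta$-hermitian form on $U$, and a short computation relying on~\eqref{eq:1} shows $\ad_H = \varphi^{-1} \tau \varphi$. Therefore $\varphi$ commutes with $\tau$ if and only if $\ad_H = \ad_h$, equivalently $H = \mu h$ for some $\mu \in F^\times$ (two $\theta$-hermitian forms with the same adjoint involution differ by a scalar fixed by $\theta$ on the center, i.e.\ by an element of $F^\times$). Unwinding $H = \mu h$ using the defining formula for $h^\iota$ yields precisely the similitude identity relating $g$, $h$, and $h^\iota$, exhibiting $g \in \Sim(U,h,h^\iota)$.

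For the order-$2$ statement, since $\iota_B$ is an $F$-algebra automorphism with $\iota_B^2 = \Id$, a direct calculation gives
\[
\varphi^2(f) = g\bigl(g f^\iota g^{-1}\bigr)^\iota g^{-1} = (g g^\iota)\, f\, (g g^\iota)^{-1} = \Int(g g^\iota)(f),
\]
so $\varphi^2 = \Id$ if and only if $g g^\iota$ lies in $Z(B) = K^\times$. To refine this to $F^\times$, set $c := g g^\iota \in K^\times$; since $c$ commutes with $g$ one has $g^\iota = c g^{-1}$, hence $g^\iota g = c$. But applying $\iota_B$ to the identity $g g^\iota = c$ also gives $g^\iota g = \iota(c)$, forcing $c = \iota(c)$ and thus $c \in F^\times$. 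The converse is immediate.

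The main obstacle I anticipate is the verification $\ad_H = \varphi^{-1} \tau \varphi$: it requires careful tracking of how the semilinearities of $\iota_U$ and $\iota_Q$ interact with $\theta$ and $\ba$, and makes essential use of~\eqref{eq:1}.
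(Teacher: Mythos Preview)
Your proof is correct and follows essentially the same route as the paper's: Skolem--Noether to get $\varphi(f)=gf^\iota g^{-1}$, then comparison of adjoint involutions (the paper does this via~\eqref{eq:1} and the identity $\Int(g^{-1})\circ\ad_h\circ\Int(g)=\ad_{h(g\cdot,g\cdot)}$, while you repackage it through the semilinear lift $\tilde\varphi=g\circ\iota_U$ and the pulled-back form $H$). Your explicit computation of $\varphi^2=\Int(gg^\iota)$ together with the argument that $gg^\iota\in K^\times$ forces $gg^\iota\in F^\times$ is exactly the ``straightforward computation'' the paper omits.
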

\begin{proof}
  It follows from the Skolem--Noether theorem that every
  $\iota$-semilinear automorphism
  $\varphi$ of $\End_QU$ has the form
  $\varphi\colon f\mapsto gf^\iota g^{-1}$ for some $g\in
  \End_QU$. Equation~\eqref{eq:1} shows that $\varphi$ commutes with
  $\ad_h$ if and only if
  $\Int(g)\circ\ad_{h^\iota}=\ad_h\circ\Int(g)$. But
  $\Int(g^{-1})\circ\ad_h\circ\Int(g)$ is the adjoint involution of
  the form $(x,y)\mapsto h(g(x),g(y))$, so $\varphi$ commutes with
  $\tau$ if and only if $g$ is a similitude $(U,h)\to(U,h^\iota)$. The
  last assertion follows by a straightforward computation.
\end{proof}

\begin{remarkk}
  \label{rem:lambdamu}
  For $g\in\Sim(U,h,h^\iota)$ we have $g^\iota\in\Sim(U,h^\iota,h)$
  with $\mu(g^\iota)=\mu(g)$, hence for all $x$, $y\in U$
  \[
  h^\iota\bigl(gg^\iota(x),gg^\iota(y)\bigr) = \mu(g)
  h\bigl(g^\iota(x),g^\iota(y)\bigr) = \mu(g)^2h^\iota(x,y).
  \]
  Therefore, if $gg^\iota=\lambda\in F^\times$, then
  $\lambda^2=\mu(g)^2$, hence $\lambda=\pm\mu(g)$.
\end{remarkk}

Of course, in the discussion above the choice of $Q_0$ is
arbitrary, and $h$ is defined up to a scalar factor. Multiplying $h$
by some nonzero central element $\alpha$ such that
$\iota(\alpha)=-\alpha$, we may assume $h$ is skew-hermitian instead
of hermitian. More generally, for any $q\in Q^\times$ such that
$\theta(q)=-q$, we may consider $\theta'=\Int(q)\circ\theta$ and set
\[
h'(x,y)=q\,h(x,y)\qquad\text{for $x$, $y\in U$.}
\]
Then $h'$ is a nondegenerate skew-hermitian form with respect to
$\theta'$, and clearly $\ad_{h'}=\ad_h$. Let also
$\iota'_Q=\Int(q)\circ\iota_Q$. The condition $\theta_Q(q)=-q$ yields
$\iota_Q(q)=-\overline q$, hence $q\iota_Q(q)\in F^\times$ and
therefore $\iota'_Q$ is a $\iota$-semilinear automorphism of $Q$ of
order~$2$. Letting $Q'_0$ denote the $F$-subalgebra of $Q$ fixed under
$\iota'_Q$, we have
\[
Q=Q'_0\otimes_FK\qquad\text{and}\qquad
\theta'=\ba\circ\iota'_Q=\iota'_Q\circ\ba.
\]

Here is one case where an appropriate choice of $q$ may lead to a
substantial simplification:

\begin{prop}
\label{descent.propbis}
Let $e_1$, \ldots, $e_n$ be an orthogonal $Q$-base of $(U,h)$, and let
\[
h=\qf{q_1,\ldots,q_n}
\] 
be the corresponding digonalization of $h$.
If the
$K$-span of the quaternions $q_1$, \dots, $q_n$ has dimension at most
$3$, then there is a quaternion $q\in Q^\times$ such that the
skew-hermitian form $h'=qh$ over $(Q,\theta')$ has a diagonalization
\[
h'=\qf{qq_1,\ldots,qq_n}
\]
with $qq_i\in Q'_0$ for $i=1$, \ldots, $n$. The skew-hermitian form
$h'$ then restricts to a nondegenerate skew-hermitian form $h'_0$
(over $(Q'_0,\ba)$) on the $Q'_0$-span $U'_0$ of $e_1$, \ldots, $e_n$,
and we have
\[
(B,\tau)=(\End_QU,\ad_h)=(\End_{Q'_0}U'_0,\ad_{h'_0})\otimes_F(K,\iota).
\]
\end{prop}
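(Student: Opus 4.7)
The plan is to translate the desired condition $qq_i\in Q'_0$ into a system of linear equations on $q$, solve it using the hypothesis on the $K$-span of the $q_i$, and then verify that the resulting $q$ yields the asserted descent. For any $q\in Q^\times$ with $\theta(q)=-q$, the identity $\iota'_Q=\Int(q)\circ\iota_Q$, together with $\iota_Q(q)=-\overline{q}$ (from $\theta(q)=-q$) and $\iota_Q(q_i)=\overline{q_i}$ (from $\theta(q_i)=q_i$), reduces the condition $\iota'_Q(qq_i)=qq_i$ to $\overline{q_iq}=-q_iq$, i.e.\ $\Trd_Q(q_iq)=0$. Thus the problem becomes: find a nonzero $q$ in the $4$-dimensional $F$-subspace $S=\{x\in Q:\theta(x)=-x\}$ such that $\Trd_Q(q_iq)=0$ for every $i$.

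To solve this system, observe that the identity $\Trd_Q\circ\theta=\iota\circ\Trd_Q$ gives, for $q_i\in\Sym(Q,\theta)$ and $q\in S$, the equality $\iota(\Trd_Q(q_iq))=\Trd_Q(\theta(q)\theta(q_i))=-\Trd_Q(q_iq)$, forcing $\Trd_Q(q_iq)\in F\sqrt{d}$ where $K=F(\sqrt{d})$. Each condition is therefore a single $F$-linear equation on $S$, so the rank of the system is bounded by the $F$-dimension of the span of $q_1,\ldots,q_n$ in $\Sym(Q,\theta)$; via the $K$-linear isomorphism $\Sym(Q,\theta)\otimes_FK\cong Q$, this equals the $K$-dimension of their $K$-span in $Q$, which is at most~$3$ by hypothesis. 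The simultaneous kernel in $S$ has $F$-dimension at least~$1$, and since $Q$ is a division algebra any nonzero element of this kernel is invertible, yielding the required $q\in Q^\times$.

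For such $q$, the equality $q\theta(q)^{-1}=-1$ shows that $\theta'=\Int(q)\circ\theta$ is an involution; using $\Nrd_Q(q)\in F$ one verifies that the canonical involution of $Q$ preserves $Q'_0$ and restricts to the canonical involution of $Q'_0$, so $(Q,\theta')=(Q'_0,\ba)\otimes_F(K,\iota)$ and $\theta'$ restricts to $\ba$ on $Q'_0$. By construction each $qq_i$ lies in $Q'_0$, is a pure quaternion (as $\Trd_Q(qq_i)=0$), and is nonzero (division algebra), so $h'_0=\qf{qq_1,\ldots,qq_n}$ is a nondegenerate skew-hermitian form over $(Q'_0,\ba)$ on $U'_0=\bigoplus_iQ'_0e_i$. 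The natural identifications $\End_QU=(\End_{Q'_0}U'_0)\otimes_FK$ and $\theta'=\ba\otimes\iota$ then give $\ad_{h'}=\ad_{h'_0}\otimes\iota$, completing the descent. The main obstacle lies in the dimension count of the second paragraph: without the observation that $\Trd_Q(q_iq)\in F\sqrt{d}$, each $q_i$ would impose two $F$-equations instead of one, and three quaternions could over-determine the $4$-dimensional space $S$, leaving no solution.
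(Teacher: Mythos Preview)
Your proof is correct and follows essentially the same path as the paper's: both reduce to finding a nonzero $\theta$-skew element $q$ satisfying the linear conditions $\Trd_Q(q_iq)=0$ (equivalently, $q^{-1}$ lies in the orthogonal of the $q_i$ for the norm form), and both use a dimension count to guarantee such an element exists. The difference is in the framing. The paper works over $K$: it takes the orthogonal complement $S^\perp\subset Q$ of the $K$-span of the $q_i$ for the norm form (a $K$-subspace of dimension $\geq1$), observes that $S^\perp$ is $\theta$-stable, and then extracts a $\theta$-skew element from it using the semilinearity of $\theta$. You instead work over $F$: you start inside the $4$-dimensional $F$-space of $\theta$-skew elements and count $F$-linear equations, invoking the identification $\Sym(Q,\theta)\otimes_FK\cong Q$ to match the $F$-rank of the system with the hypothesized $K$-dimension of the span. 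The two arguments are essentially dual, but the paper's $K$-linear version is characteristic-free (as the paper explicitly notes after the proof), whereas your use of $K=F(\sqrt d)$ and the direct-sum decomposition $Q=\Sym(Q,\theta)\oplus\{\theta\text{-skew}\}$ tacitly assumes $\charac F\neq2$.
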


\begin{proof} 
  Let $S\subset Q$ be the $K$-span of $q_1$, \ldots, $q_n$, and let
  $S^\perp\subset Q$ be the orthogonal of $S$ for the norm form on
  $Q$. Since $\Nrd_Q(q)=q\overline q=\overline q q$ for every $q\in Q$,
  we have
  \[
  S^\perp=\{s\in Q\mid s\overline{q_i}+q_i\overline s=0 \text{ for all
    $i$}\} = \{s\in Q\mid \overline s q_i+\overline{q_i}s=0 \text{ for
    all $i$}\}.
  \]
  The $K$-space $S^\perp$ is stable under $\theta$ because
  $\theta(q_i)=q_i$ for all $i$ and $\theta$ commutes with $\ba$.
  If $\dim S\leq3$, then $\dim
  S^\perp\geq1$, hence we may find $q\in Q^\times$ such that
  $q^{-1}\in S^\perp$  and 
  $\theta(q)=-q$. (Take $q=(s-\theta(s))^{-1}$ for any $s\in
  S^\perp$ such that $\theta(s)\neq s$; if no such $s$ exists we
  must have $S^\perp=\{0\}$ because $\theta$ is $\iota$-semilinear.) Since
  $\theta(q_i)=q_i$ and $\theta(q)=-q$ we have
  $\overline{q_i}=\iota_Q(q_i)$ and $\overline q=-\iota_Q(q)$, hence
  \[
  \iota_Q(q_i)q^{-1}=\overline{q_i}q^{-1}=-{\overline q}^{-1}q_i =
  \iota_Q(q)^{-1}q_i \quad\text{for $i=1$, \ldots, $n$}.
  \]
  Therefore, 
  \[
  \iota'_Q(qq_i)=q\iota_Q(q)\iota(q_i)q^{-1}=qq_i \qquad\text{for
    $i=1$, \ldots, $n$.}
  \]
  We have thus shown $qq_i\in Q'_0$ for $i=1$, \ldots, $n$; the other
  assertions readily follow.
\end{proof}

The condition on the dimension of the $K$-span of $q_1$,
\dots, $q_n$ is automatically satisfied if $n\leq 3$. Therefore,
Theorem~\ref{descent.thm} for $B$ of index~$2$ follows from 
Proposition~\ref{descent.propbis}. The case where $B$ is split was
already considered in Corollary~\ref{GPCor912.cor}. 

Note that the proof does not require any hypothesis on the
characteristic. (Of course, skew-hermitian forms are hermitian in
characteristic~$2$.) 

\subsection{Generic construction of hermitian forms of unitary type}
\label{sec:genunit}

In this section, we fix a central division algebra with involution of
the first kind $(D,\rho)$ over an arbitrary field $F$ of
characteristic different from~$2$. Adjoining to $F$ an indeterminate
$t$, we consider the fields of Laurent series
\[
\hat K=F((t)) \qquad\text{and}\qquad \hat F=F((t^2))\subset \hat K.
\]
We let $\iota$ denote the nontrivial $\hat F$-automorphism of $\hat K$
and
\[
(\hat D,\hat\rho)=(D,\rho)\otimes_F(\hat K,\iota).
\]
Thus, $(\hat D,\hat\rho)$ is a central division algebra over $\hat K$
with unitary involution. Over this division algebra, we construct
hermitian forms of a particular type, as follows: let $(V_1,h_1)$ be a
hermitian space over $(D,\rho)$ and let $(V_2,h_2)$ be a
skew-hermitian space over $(D,\rho)$. Extending scalars, we obtain a
hermitian form $\hat h_1$ on $\hat V_1=V_1\otimes_F\hat K$ and a
skew-hermitian form $\hat h_2$ on $\hat V_2=V_2\otimes_F\hat K$ (over
$(\hat D,\hat\rho)$). We then set
\[
(\hat U,\hat h)=(\hat V_1\oplus\hat V_2,\hat h_1\perp \qf{t}\hat h_2).
\]
Since $\iota(t)=-t$ and $\hat h_2$ is skew-hermitian, the form
$\qf{t}\hat h_2$ is hermitian, hence $\hat h$ is a hermitian form on
$\hat U$ over $(\hat D,\hat\rho)$.
Set
$\hat D_0=D\otimes_F\hat F$; we have $\hat D=\hat D_0\otimes_{\hat F} \hat K$; hence, the algebra $\hat D$ has a descent. 
Define $\iota_{\hat D}=\Id_D\otimes\iota=\Id_{\hat D_0}\otimes \iota$, and $\hat
U_0=(V_1\oplus V_2)\otimes_F\hat F$, $\iota_{\hat U}=\Id_{\hat
  U_0}\otimes\iota$. Every vector $x\in\hat U$ has a unique expression
as a series $x=\sum_i x_i\otimes t^i$ with $x_i\in V_1\oplus V_2$ for
all $i$, and $\iota_{\hat U}(x)=\sum_i x_i\otimes(-t)^i$. The
conjugate hermitian form $\hat h^\iota$ is
\begin{equation}
\label{eq:conjh}
\hat h^\iota=\hat h_1\perp \qf{-t}\hat h_2.
\end{equation}

For the rest of this section, we assume $h_1$ and $h_2$ are
anisotropic, hence $\hat h$ is anisotropic. As in
\S\ref{sec:genericsum}, we use the $t$-adic valuation to obtain
information on the set of similitudes $\Sim(\hat U,\hat h,\hat
h^\iota)$. More precisely, we prove: 

\begin{prop}
  \label{prop:simgenunit}
  Let $(\hat U,\hat h)$ be defined as above by $\hat h=\hat h_1\perp
  \qf{t}\hat h_2$, where $h_1$ (respectively $h_2$) is an anisotropic
  hermitian (respectively skew-hermitian) form over $(D,\rho)$.  
  Every similitude $g\in\Sim(\hat U,\hat h,\hat h^\iota)$ has the form
  $g=\lambda g'$ for some $\lambda\in\hat K^\times$ and some
  similitude $g'\in\Sim(\hat U,\hat h,\hat h^\iota)$ with $\mu(g')\in
  F^\times$. Moreover, on the graded module $\gr(\hat U)$ associated
  to a suitable norm on $\hat U$, the map $g'$ induces a map $\tilde
  g'$ of the form $\tilde g'=g_1\oplus g_2$ for some similitudes
  $g_1\in\Sim(V_1,h_1)$, $g_2\in\Sim(V_2,h_2)$ with
  \[
  \mu(g')=\mu(g_1)=-\mu(g_2).
  \]
\end{prop}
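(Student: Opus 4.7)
The plan is to extend the valuation-theoretic strategy of Theorem~\ref{gensummainthm} and Lemma~\ref{prop:sim2} to the unitary setting. I equip $\hat K = F((t))$ with the $t$-adic valuation $v$ (value group $\mathbb{Z}$, residue field $F$), whose restriction to $\hat F = F((t^2))$ has value group $2\mathbb{Z}$. Since $\hat D$ is a division algebra over the Henselian field $\hat K$, the valuation $v$ extends uniquely to $\hat D$. Because $\hat h$ is anisotropic, setting
\[
\nu(x) = \tfrac{1}{2} v\bigl(\hat h(x,x)\bigr) \qquad \text{for $x \in \hat U$}
\]
defines a norm on $\hat U$ in the sense of \cite{Sch, RTW}. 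The shape $\hat h = \hat h_1 \perp \qf{t}\hat h_2$ yields $\nu(\hat V_1 \setminus\{0\}) = \mathbb{Z}$ and $\nu(\hat V_2 \setminus\{0\}) = \tfrac{1}{2} + \mathbb{Z}$, so these value sets are disjoint. Consequently $\nu(x_1 + x_2) = \min(\nu(x_1), \nu(x_2))$ for $x_i \in \hat V_i$, and the associated graded module decomposes as $\gr(\hat U) = \gr(\hat V_1) \oplus \gr(\hat V_2)$.

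Let now $g \in \Sim(\hat U, \hat h, \hat h^\iota)$. By definition $\mu(g) \in \hat F^\times$, so $v(\mu(g)) \in 2\mathbb{Z}$. From the relation $\hat h^\iota(g(x), g(x)) = \mu(g) \hat h(x,x)$ together with the fact that $\hat h$ and $\hat h^\iota$ induce the same norm on $\hat U$ (because $v(-t) = v(t)$), I conclude $\nu(g(x)) = \nu(x) + \tfrac{1}{2} v(\mu(g))$, an integer shift of $\nu$. Such a shift preserves the partition $\Gamma_{\hat U} = \mathbb{Z} \sqcup (\tfrac{1}{2} + \mathbb{Z})$, so the induced graded map $\tilde g$ maps $\gr(\hat V_i)$ into itself for $i = 1, 2$.

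To achieve $\mu(g') \in F^\times$, write $v(\mu(g)) = 2k$ and set $\lambda_0 = t^k \in \hat F^\times$. Using $\mu(\lambda g) = N_{\hat K/\hat F}(\lambda) \mu(g)$ for $\lambda \in \hat K^\times$, I get $\mu(\lambda_0^{-1} g) = t^{-2k} \mu(g)$, which has valuation zero and hence equals $a(1+m)$ for some $a \in F^\times$ and some $m \in \hat F$ with $v(m) > 0$. Since $\hat F$ is Henselian and $\charac F \neq 2$, there exists $\lambda_1 \in \hat F^\times$ with $\lambda_1^2 = 1+m$, and then $g' := (\lambda_0 \lambda_1)^{-1} g$ satisfies $\mu(g') = a \in F^\times$. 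Taking $\lambda = \lambda_0 \lambda_1 \in \hat K^\times$ establishes the first assertion.

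Finally, since $\mu(g') \in F^\times$ has $v(\mu(g')) = 0$, the map $\tilde g'$ preserves the grading; its restrictions to the bottom homogeneous pieces of $\gr(\hat V_1)$ and $\gr(\hat V_2)$, which are canonically $V_1$ and $V_2$, give $D$-linear maps $g_1 \in \End_D V_1$ and $g_2 \in \End_D V_2$. Transferring the similitude identity to the graded level yields: on $V_1$, both $\tilde h$ and $\tilde{h^\iota}$ restrict to $h_1$, giving $h_1(g_1(x), g_1(y)) = \mu(g') h_1(x,y)$; on $V_2$, however, $\tilde h$ restricts to $\tilde t \cdot h_2$ while $\tilde{h^\iota}$ restricts to $-\tilde t \cdot h_2$, and matching the residues produces $h_2(g_2(x), g_2(y)) = -\mu(g') h_2(x,y)$. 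Therefore $g_i \in \Sim(V_i, h_i)$ with $\mu(g_1) = \mu(g') = -\mu(g_2)$. The main subtlety I expect is the bookkeeping of the sign coming from $\qf{-t}$ in $\hat h^\iota$ versus $\qf{t}$ in $\hat h$, which is exactly the source of the asymmetry $\mu(g_2) = -\mu(g_1)$ in the conclusion.
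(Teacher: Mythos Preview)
Your argument follows the paper's proof essentially step for step: the $t$-adic norm $\nu(x)=\tfrac12 v(\hat h(x,x))$, the disjoint value sets $\mathbb{Z}$ and $\tfrac12+\mathbb{Z}$, the integer grade shift because $\mu(g)\in\hat F^\times$, the Henselian square-root trick to reduce to $\mu(g')\in F^\times$, and the sign discrepancy on $V_2$ coming from $\qf{-t}$ in $\hat h^\iota$.

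Two small slips to fix. First, $t^k$ need not lie in $\hat F^\times=F((t^2))^\times$; it lies in $\hat K^\times$, which is all the statement requires. Second, for $\lambda\in\hat K^\times$ one has $\mu(\lambda g)=N_{\hat K/\hat F}(\lambda)\,\mu(g)$, and $N_{\hat K/\hat F}(t^k)=t^k\iota(t)^k=(-1)^kt^{2k}$, so $\mu(\lambda_0^{-1}g)=(-1)^kt^{-2k}\mu(g)$ rather than $t^{-2k}\mu(g)$. Neither slip affects the argument: the valuation is still zero, and you proceed exactly as before.
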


\begin{proof}

Let $v$ be the $t$-adic valuation on $\hat K$. We write again $v$ for
its extension to $\hat D$ and define a $v$-norm on $\hat U$ by
\[
\nu(x)=\half v\bigl(\hat h(x,x)\bigr) \qquad\text{for $x\in\hat U$.}
\]
Thus, we have $\nu(x_1)\in\Z$ for $x_1\in\hat V_1$, $\nu(x_2)\in\half+\Z$ for
$x_2\in\hat V_2$, and
\[
\nu(x_1+x_2)=\min\bigl(\nu(x_1),\nu(x_2)\bigr)\in\half\Z
\quad\text{for $x_1\in\hat V_1$ and $x_2\in\hat V_2$.}
\]
In view of \eqref{eq:conjh} it follows that $v\bigl(\hat h(x,x)\bigr)
=v\bigl(\hat h^\iota(x,x)\bigr)$, hence
\begin{equation}
  \label{eq:genunit1}
  \nu(x)=\half v\bigl(\hat h^\iota(x,x)\bigr) \qquad\text{for $x\in
    \hat U$.}
\end{equation}
The graded module $\gr(\hat U)$ is defined as in
\S\ref{sec:genericsum}. It carries a hermitian form $\tilde h$ and we
have
\[
(\gr(\hat U),\tilde h)=(\gr(\hat V_1),\tilde h_1)\perp (\gr(\hat
V_2),\tilde h_2),\qquad
(\gr(\hat U),\tilde h^\iota)=(\gr(\hat V_1),\tilde h_1)\perp (\gr(\hat
V_2),-\tilde h_2)
\]
where the hermitian forms $\tilde h_1$, $\tilde h_2$ are given by
\[
\tilde h_1(x_1,y_1)=h_1(x_1,y_1) \qquad\text{and}\qquad
\tilde h_2(x_2,y_2) =\tilde t\, h_2(x_2,y_2)
\]
for $x_1$, $y_1\in V_1$ and $x_2$, $y_2\in V_2$. 

Now, suppose $g\colon(\hat U,\hat h)\to (\hat U,\hat h^\iota)$ is a
similitude. From $\hat h^\iota\bigl(g(x),g(x)\bigr)=\mu(g) \hat
h(x,x)$ it follows by \eqref{eq:genunit1} that
\[
\nu\bigl(g(x)\bigr)=\nu(x)+\half v\bigl(\mu(g)\bigr) \qquad\text{for
  $x\in \hat U$.}
\]
Therefore, $g$ induces a similitude $\tilde g\colon(\gr(\hat U),\tilde
h) \to (\gr(\hat U),\tilde h^\iota)$, which shifts the grading by
$\half v\bigl(\mu(g)\bigr)$. Note that $\half
v\bigl(\mu(g)\bigr)\in\Z$ because $\mu(g)\in\hat F\subset\hat K$. Therefore,
$\gr(\hat V_1)$ and $\gr(\hat V_2)$ are invariant under $\tilde g$. If
$\mu(g)\in F^\times$, the restriction of $\tilde g$ to
$V_1\subset\gr(\hat V_1)$ (resp.\ to $V_2\subset\gr(\hat V_2)$) is a
similitude $g_1\in\Sim(V_1,h_1)$ (resp.\ $g_2\in\Sim(V_2,h_2)$), and
we write (with a slight abuse of notation) $\tilde g=g_1\oplus g_2$.

  Since $\mu(g)\in \hat F^\times$ we have $v\bigl(\mu(g)\bigr)\in2\Z$
  hence there exists $\lambda_0\in\hat K^\times$ such that
  $v\bigl(\mu(g)\bigr)=2v(\lambda_0)$. Then
  $v\bigl(\mu(\lambda_0^{-1}g)\bigr)=0$ and we may find $a\in
  F^\times$, $m\in\hat F^\times$ with $\mu(\lambda_0^{-1}g)=a(1+m)$
  and $v(m)>0$. Arguing as in the proof of the first assertion of Theorem~\ref{gensummainthm},
  we find $\lambda_1\in\hat F^\times$ such that $\lambda_1^2=1+m$, and
  set $\lambda=\lambda_0\lambda_1$. Then $g'=\lambda^{-1}g\in\Sim(\hat
  U,\hat h,\hat h^\iota)$ and $\mu(g')=a\in F^\times$. The equation
  \[
  \tilde h^\iota\bigl(g'(x),g'(y)\bigr)=a\, \tilde h(x,y)
  \qquad\text{for $x$, $y\in\gr(\hat U)$}
  \]
  yields in particular
  \[
  h_1\bigl(g'(x_1),g'(y_1)\bigr) = a\, h_1(x_1,y_1) \qquad\text{for
    $x_1$, $y_1\in V_1$}
  \]
  and
  \[
  -\tilde t\, h_2\bigl(g'(x_2),g'(y_2)\bigr)= a\tilde t\, h_2(x_2,y_2)
  \qquad\text{for $x_2$, $y_2\in V_2$.}
  \]
  Therefore, the restriction $g_1$ of $\tilde g$ to $V_1$ is a
  similitude with $\mu(g_1)=a$, and the restriction $g_2$ of $\tilde
  g$ to $V_2$ is a similitude with $\mu(g_2)=-a$.
\end{proof}

\begin{remarkk}
  \label{rem:giota}
  It is readily verified that $\tilde{\iota_{\hat U}(x)}=\tilde x$ for
  all $x\in V_1\oplus V_2$. Therefore, $\tilde g^\iota=\tilde g$ if
  $\mu(g)\in F^\times$.
\end{remarkk}

\subsection{Examples of groups of type $^2\sfA_n$}
\label{sec:outunitex}

In this section, we use Example~\ref{noimproperlemma:ex} together with
the generic construction of \S\ref{sec:genunit} to build examples of
unitary groups for which (Out~1) holds and (Out~2) fails, or (Out~2) holds and (Out~3) fails.

Let $n\geq7$ be an odd integer. Write $n=5+2m$, where $m\geq1$. We
construct groups of type $^2\sfA_n$ as unitary groups of hermitian
forms of dimension~$3+m$ over a quaternion division algebra with unitary
involution. Since the index of the endomorphism algebra is~$2$, these
groups satisfy~(Out~1).

Adjoining independent indeterminates to an arbitrary field $k$ of
characteristic~$0$, we form the field
\[
F=k(a_1,a_2,x_1,\ldots,x_m)((t_1))((t_2))((t_3))
\]
and the quaternion algebra
\[
Q=(a_1,a_2)_F
\]
with its conjugation involution $\ba$. Let $a_3\in F$ be defined by
Equation~\eqref{eq:a3}. Recall from Example~\ref{noimproperlemma:ex}
that $Q$ contains pure quaternions $q_1$, $q_2$, $q_3$ with
$q_i^2=a_i$ for $i=1$, $2$, $3$. Adjoining to $F$ another
indeterminate $t$, form
\[
\hat K=F((t)), \qquad \hat F=F((t^2))\subset \hat K, \qquad \hat Q=
Q\otimes_F\hat K.
\]
Let $\iota$ be the nontrivial $\hat F$-automorphism of $\hat
K$. Consider the unitary involution $\hat\rho=\ba\otimes\iota$ on
$\hat Q$ and the following hermitian form over $(\hat Q,\hat\rho)$:
\[
\hat h=\qf{x_1,\ldots,x_m}\perp \qf{t}\qf{t_1q_1,t_2q_2,t_3q_3}.
\]
Let $\tau=\ad_{\hat h}$ be its adjoint involution on $B=M_{m+3}(\hat
Q)$.

\begin{prop}
  \label{exunit:prop}
  The algebra with involution $(B,\tau)$ does not admit any
  $\iota$-semilinear automorphism of order~$2$. It admits
  $\iota$-semilinear automorphisms if and only if $-1\in k^{\times2}$.
\end{prop}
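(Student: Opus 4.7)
The plan is to translate both claims, via Propositions~\ref{prop:outunitex} and~\ref{prop:simgenunit}, into questions about simultaneous similitudes of $h_1$ and $h_2$. Any $\iota$-semilinear automorphism of $(B,\tau)$ arises from some $g\in\Sim(\hat U,\hat h,\hat h^\iota)$; after scaling by $\hat K^\times$ (which does not change the induced automorphism, since $\hat K$ is central), Proposition~\ref{prop:simgenunit} reduces to $\mu(g)=a\in F^\times$ and $\tilde g=g_1\oplus g_2$ on the graded level, with $g_i\in\Sim(V_i,h_i)$ and $\mu(g_1)=-\mu(g_2)=a$. The automorphism has order~$2$ precisely when $gg^\iota\in\hat F^\times$ (Proposition~\ref{prop:outunitex}), which by Remarks~\ref{rem:lambdamu} and~\ref{rem:giota} gives $g_1^2=g_2^2=\pm a$ on the graded level.

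For the order-$2$ claim, I apply Lemma~\ref{lem:propersimhath} to $g_2$ (under $-1\in k^{\times2}$; the alternative case follows from the second claim, which provides no semilinear automorphism at all). This forces $g_2^2=\mu(g_2)=-a\in F^{\times2}$; writing $-a=c^2$, the factorization $(g_1-c)(g_1+c)=0$ combined with the anti-symmetry $\ad_{h_1}(g_1)=-g_1$ (consequence of $g_1^2=-\mu(g_1)$) forces a decomposition $V_1=V_1^+\oplus V_1^-$ with $g_1|_{V_1^\pm}=\pm c$ and both $V_1^\pm$ totally isotropic for $h_1$. So $h_1$ would be hyperbolic, contradicting the anisotropy of $\qf{x_1,\ldots,x_m}$ with the $x_i$ independent indeterminates over $k(a_1,a_2)((t_1))((t_2))((t_3))$.

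For the existence part when $-1\in k^{\times2}$, take $g_1=\sqrt{-1}\,i\cdot\Id_{V_1}$, a scalar similitude of $h_1$ with multiplier $\Nrd_Q(\sqrt{-1}\,i)=a_1$. By (the proof of) Corollary~\ref{-1square:cor}, applied over the enlarged base field $F$, the form $h_2$ admits a similitude with multiplier $a_1$; scaling by $\sqrt{-1}$ yields multiplier $-a_1$. The block-diagonal assembly of these similitudes lies in $\Sim(\hat U,\hat h,\hat h^\iota)$ by the natural converse to Proposition~\ref{prop:simgenunit} (since $\hat h=\hat h_1\perp\qf{t}\hat h_2$ and $\hat h^\iota=\hat h_1\perp\qf{-t}\hat h_2$).

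The main obstacle is the ``only if'' direction of the existence claim when $-1\notin k^{\times2}$, where I must show $G(h_1)\cap(-G(h_2))=\emptyset$ in $F^\times$. The plan is a ramification analysis over $F=L((t_1))((t_2))((t_3))$ with $L=k(a_1,a_2,x_1,\ldots,x_m)$: Lemmas~\ref{lem:sim1dim} and~\ref{prop:sim2} give $G(h_2)=\{\mu\in F^\times:(\mu,a_i)_F\in\{0,Q\}\ \text{for}\ i=1,2,3\}$, and the non-trivial ramification of the symbols $(t_j,a_i)_F$ and $(x_k,a_i)_L$ at $v_{t_j}$ and $v_{x_k}$ respectively (while $Q=(a_1,a_2)$ is unramified) forces both the $t$- and $x$-parts of $a\pmod{F^{\times2}}$ to be trivial, so $a\in L_0^\times\pmod{F^{\times2}}$ with $L_0=k(a_1,a_2)$. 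A further analysis at $v_{a_1}$ and $v_{a_2}$ on $L_0$, using $v_{a_1}(a_3)=1$ with unit residue $(1-a_2)^2$ a square, restricts $a$ to the four classes $\{1,-a_1,-a_2,a_1a_2\}\pmod{L_0^{\times2}}$; each is then excluded by direct computation of $(-a,a_i)_L$, relying on the fact that $(a_1,-1)_L\notin\{0,Q_L\}$ when $-1\notin k^{\times2}$, which follows from the dyadic valuation argument of Example~\ref{noimproperlemma:ex}.
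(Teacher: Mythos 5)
Your overall framework matches the paper's: translate via Propositions~\ref{prop:outunitex} and \ref{prop:simgenunit} to conditions on $g_1\in\Sim(h_1)$, $g_2\in\Sim(h_2)$ with $\mu(g_1)=-\mu(g_2)\in F^\times$, and treat the order-$2$ question via $g_1^2=g_2^2=\pm\mu(g)$.

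For the order-$2$ claim, your argument genuinely differs from, and is arguably cleaner than, the paper's. Having invoked Lemma~\ref{lem:propersimhath} to force $g_2^2=\mu(g_2)=-a\in F^{\times2}$, you then deduce $g_1^2=-\mu(g_1)\in F^{\times2}$, observe that $\ad_{h_1}(g_1)=-g_1$, split $V_1$ into $\pm c$ eigenspaces, and check directly (using centrality of $c$) that each eigenspace is totally $h_1$-isotropic --- contradicting anisotropy of $\qf{x_1,\ldots,x_m}$. The paper instead proves Lemma~\ref{lem:g1} by a second generic-sum reduction over $H=(a_1,a_2)_{k(a_1,a_2)}$, forcing each block $g_i$ to be a pure quaternion of square $1$. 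Both are correct; your eigenspace argument only needs anisotropy of $h_1$ and so is more portable. For existence when $-1\in k^{\times2}$, your block-diagonal construction with $g_1=\sqrt{-1}\,i\cdot\Id$ works (the multipliers $a_1$ and $-a_1$ are compatible under the converse of Proposition~\ref{prop:simgenunit}), but the paper's one-line observation $\qf{-1}\hat h_2\simeq\hat h_2$, hence $\hat h\simeq\hat h^\iota$, is much shorter.

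The weak point is the nonexistence direction when $-1\notin k^{\times2}$. Your ramification strategy misses the key simplification the paper uses: by Corollary~\ref{-1square:cor}, $h_2$ admits \emph{no improper similitudes} in this case, so $g_2$ is automatically proper; then by \cite[(13.38)]{KMRT} and the fact that $F(\sqrt{a_1a_2a_3})$ splits $Q$, one gets $\mu(g_2)\in\Nrd_Q(Q^\times)$ immediately, and the constraint $-\mu(g_2)\in G(h_1)$, passed to the trace form $\varphi\simeq\qf{1,-a_1,-a_2,a_1a_2}\qf{x_1,\ldots,x_m}$, yields $\qf{-1}\varphi\simeq\varphi$, which is impossible. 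Your sketch instead tries to pin down $\mu(g_2)$ to finitely many square classes in $k(a_1,a_2)^\times$ via ramification at the $t_j$, $x_k$, $a_1$, $a_2$ places, and then exclude each class. Besides being far more intricate, the sketch conflates which constraints come from $G(h_2)$ and which from $G(h_1)$: the final ``direct computation of $(-a,a_i)_L$'' step is only meaningful as an $h_2$-condition, yet the class $a=1$ clearly survives every $h_2$-constraint and can only be excluded using $h_1$ (via $-1\notin\Nrd_Q(Q^\times)$). As written the argument has a genuine gap here; I'd recommend replacing it with the properness-plus-trace-form argument, which closes the case in a few lines.
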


In view of Proposition~\ref{unitary.prop}, this provides a group
$\gPGU(B,\tau)$ which does not satisfy (Out~3), and satisfies
(Out~2) if and only if $-1\in k^{\times2}$.

\begin{proof}
  Proposition~\ref{prop:outunitex} translates the conditions on
  semilinear automorphisms of $(B,\tau)$ into conditions on
  similitudes of $\hat h$. Thus, we have to show that there are no
  similitudes $g\in\Sim(\hat h,\hat h^\iota)$ such that $gg^\iota\in
  \hat F$, and that $\Sim(\hat h,\hat h^\iota)$ is nonempty if and
  only if $-1\in k^{\times2}$.

  Note that the form $\hat h$ is obtained by the generic construction
  of \S\ref{sec:genunit}, with $(D,\rho)=(Q,\ba)$ and
  \[
  h_1=\qf{x_1,\ldots,x_m},\qquad h_2=\qf{t_1q_1,t_2q_2,t_3q_3}.
  \]
  
  Suppose first $-1\notin k^{\times2}$ and $g\in\Sim(\hat h, \hat
  h^\iota)$. By Proposition~\ref{prop:simgenunit} we may assume
  $\mu(g)\in F^\times$, hence $\tilde g=g_1\oplus g_2$ for some
  similitudes $g_1\in\Sim(h_1)$, $g_2\in\Sim(h_2)$ with
  $\mu(g_1)=-\mu(g_2)$. Since by Corollary~\ref{-1square:cor} $h_2$
  does not admit improper 
  similitudes, the similitude $g_2$ must be proper, hence by
  \cite[(13.38)]{KMRT} $\mu(g_2)$ 
  is a norm from the discriminant extension, which is
  $F(\sqrt{a_1a_2a_3})$. As this extension splits $Q$, it follows that
  $\mu(g_2)$ is a reduced norm of $Q$, hence
  \begin{equation}
    \label{eq:muNrd}
    \qf{\mu(g_2)}\qf{1,-a_1,-a_2,a_1a_2} \simeq \qf{1,-a_1,-a_2,a_1a_2}.
  \end{equation}
  On the other hand, since $g_1$ is a similitude of $h_1$ with
  multiplier $-\mu(g_2)$, we have
  \[
  \qf{-\mu(g_2)} h_1\simeq h_1.
  \]
  It follows that $-\mu(g_2)$ is also the multiplier of a similitude
  of the ``trace'' quadratic form $\varphi(x)=h_1(x,x)$, which is
  \[
  \varphi\simeq \qf{1,-a_1,-a_2,a_1a_2} \qf{x_1,\ldots, x_m}.
  \]
  Taking into account \eqref{eq:muNrd}, we see that
  \[
  \qf{-1} \qf{1,-a_1,-a_2,a_1a_2} \qf{x_1,\ldots, x_m} \simeq
  \qf{1,-a_1,-a_2,a_1a_2} \qf{x_1,\ldots, x_m}.
  \]
  This is impossible because $-1\notin k^{\times2}$ and $a_1$, $a_2$,
  $x_1$, \ldots, $x_m$ are indeterminates. Therefore, $\Sim(\hat
  h,\hat h^\iota)=\varnothing$ if $-1\notin k^{\times2}$.

  Suppose next $-1\in k^{\times2}$. Then $\hat h=\hat h_1 \perp \qf{t}
  \hat h_2$ is clearly isometric to $\hat h^\iota= \hat h_1 \perp
  \qf{-t}\hat h_2$, hence $\Sim(\hat h,\hat h^\iota)$ is not
  empty. Assume $g\in\Sim(\hat h,\hat h^\iota)$ satisfies 
  $gg^\iota=\lambda\in \hat F^\times$. As above, we may scale $g$ and
  assume $\mu(g)\in F^\times$, hence also $\lambda\in F^\times$ since
  $\lambda=\pm\mu(g)$ by Remark~\ref{rem:lambdamu}. By
  Proposition~\ref{prop:simgenunit} we have
  \[
  \tilde g=g_1\oplus g_2
  \]
  for some $g_1\in\Sim(h_1)$, $g_2\in\Sim(h_2)$ with
  $\mu(g)=\mu(g_1)=-\mu(g_2)$. By Remark~\ref{rem:giota}, the
  equation $gg^\iota=\lambda$ yields $\tilde g^2=\lambda$, hence we
  also have $g_1^2=g_2^2=\lambda$. Now, by
  Lemma~\ref{lem:propersimhath} the similitude $g_2$ must be
  proper and satisfy $g_2^2=\mu(g_2)\in
  F^{\times2}$ since it is square-central. Scaling again, we may assume
  \[
  \mu(g)=\mu(g_1)=-\mu(g_2)=-1 \quad\text{and}\quad \tilde
  g^2=g_1^2=g_2^2=1.
  \]
  The following lemma shows that $h_1$ does not have any similitude
  $g_1$ such that $g_1^2=-\mu(g_1)=1$, hence the existence of $g$
  leads to a contradiction and the proof of
  Proposition~\ref{exunit:prop} is complete:
\renewcommand{\qed}{\relax}
\end{proof}

\begin{lemma}
  \label{lem:g1}
  There is no similitude $g\in\Sim(h_1)$ such that $g^2=-\mu(g)=1$.
\end{lemma}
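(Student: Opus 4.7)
The plan is to derive a contradiction by exploiting the eigenspace decomposition of $g$ to show that $h_1$ would have to be isotropic, and then contradicting this by proving that the associated trace quadratic form is anisotropic over $F$. Suppose $g\in\Sim(h_1)$ satisfies $g^2=1$ and $\mu(g)=-1$. Since $g$ is $Q$-linear and $\charac F\neq 2$, the module $V_1$ splits as $V_1=V_1^+\oplus V_1^-$, where $V_1^\pm=\ker(g\mp 1)$. Neither summand is zero, for otherwise $g=\pm 1$ and $\mu(g)=1\neq -1$. Picking any nonzero $v\in V_1^+$, the similitude relation gives
\[
h_1(v,v)=h_1(g(v),g(v))=\mu(g)\,h_1(v,v)=-h_1(v,v),
\]
so $h_1(v,v)=0$. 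Consequently, the trace quadratic form $\varphi(v)=h_1(v,v)$ on the underlying $F$-vector space is isotropic over $F$.

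Next, I would identify $\varphi$ explicitly. In the diagonalizing $Q$-basis of $h_1=\qf{x_1,\dots,x_m}$, a direct computation $\varphi(\sum e_iq_i)=\sum x_i\,\overline{q_i}q_i$ yields
\[
\varphi\simeq \pi\otimes\qf{x_1,\dots,x_m},\qquad \pi=\qf{1,-a_1,-a_2,a_1a_2},
\]
the reduced norm form of $Q$ (this is exactly the ``trace'' form already used in the proof of Proposition~\ref{exunit:prop}). The contradiction will follow once I show that $\varphi$ is anisotropic over $F$.

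For this, I would apply Springer's theorem iteratively. All coefficients of $\varphi$ lie in the subfield $E:=k(a_1,a_2,x_1,\dots,x_m)\subset F$, which is the residue field of $F$ under the composition of the $t_1$-, $t_2$- and $t_3$-adic valuations. Three successive applications of Springer's theorem therefore reduce the question to anisotropy of $\pi\otimes\qf{x_1,\dots,x_m}$ over $E$. Since $a_1$ and $a_2$ are indeterminates over $k$, the algebra $(a_1,a_2)_{k(a_1,a_2)}$ is a quaternion division algebra and its norm form $\pi$ is anisotropic over $k(a_1,a_2)$. Embedding $E$ into the iterated Laurent series field $k(a_1,a_2)((x_1))\cdots((x_m))$ and applying Springer $m$ further times (with respect to the $x_i$-adic valuations) yields anisotropy of $\pi\otimes\qf{x_1,\dots,x_m}$ over this larger field, hence also over its subfield $E$. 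The main obstacle is essentially bookkeeping: correctly identifying the trace form and organizing the iterated Springer reductions; once this is in place, the contradiction with the isotropy derived in the first step is immediate.
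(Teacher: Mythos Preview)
Your proof is correct and takes a genuinely different route from the paper's. The paper extends scalars to $k(a_1,a_2)((x_1))\cdots((x_m))((t_1))((t_2))((t_3))$, views $h_1$ as a generic orthogonal sum of $m$ copies of the form $\qf{1}$ over $H=(a_1,a_2)_{k(a_1,a_2)}$, and invokes Lemma~\ref{prop:sim2}: the induced graded similitude decomposes as $\tilde g=g_1\oplus\cdots\oplus g_m$ with each $g_i\in\Sim(\qf{1})$ satisfying $g_i^2=-\mu(g_i)=1$, so each $g_i$ is a pure quaternion of square~$1$, impossible since $H$ is a division algebra. You instead work directly on $V_1$, using the eigenspace decomposition of the involution $g$ to force isotropy of the trace form $\varphi\simeq n_Q\otimes\qf{x_1,\ldots,x_m}$, and then rule this out by iterated applications of Springer's theorem. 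Your approach is more self-contained---it needs only standard quadratic form theory and avoids the graded/norm machinery of \S\ref{sec:genericsum}---while the paper's argument reuses tools already developed there and stays entirely within the hermitian category without passing to the trace form.
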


\begin{proof}
  Extending scalars to
  $k(a_1,a_2)((x_1))\ldots((x_m))((t_1))((t_2))((t_3))$, we may regard
  $h_1$ as a generic orthogonal sum of $m$ times the hermitian form
  $\qf{1}$ over the quaternion algebra $H=(a_1,a_2)_{k(a_1,a_2)}$, and
  use the results of \S\ref{sec:genericsum}. If $g\in\Sim(h_1)$ is
  such that $g^2=-\mu(g)=1$, then by Lemma~\ref{prop:sim2} we
  have
  \[
  \tilde g= g_1\oplus\cdots\oplus g_m
  \]
  for some $g_i\in\Sim(\qf{1})=\Sim(H,\ba)$ with
  $g_i^2=-\mu(g_i)=1$. Each $g_i$ is a pure quaternion because
  $g_i^2=-\mu(g_i)$, and $H$ does not contain any pure quaternion with
  square~$1$ because it is not split. We thus obtain a contradiction.
\end{proof}

\section*{Appendix: Trialitarian groups}

Let $G$ be an algebraic group scheme of adjoint type $\sfD_4$ over an
arbitrary field $F$. Via the $*$-action of the absolute Galois group
of $F$ on the Dynkin diagram $\Delta$ of $G$ (see \cite[\S15.5]{Sp})
we may associate to $G$ a cubic \'etale $F$-algebra $L$ such that
\[
\gAut(\Delta)=\gAut_F(L).
\]
If $g$ is the index of the kernel of the Galois action, the type of
$G$ is denoted by $^g\sfD_4$. Thus, if $G$ is of type $^6\sfD_4$, then
$L$ is a noncyclic separable cubic field extension of $F$, so
$\Aut_F(L)=\{\Id\}$ and $G$ does not have any outer automorphism
defined over $F$. If
$G$ is of type $^2\sfD_4$, then $L\simeq F\times Z$ for some separable
quadratic field extension $Z$ of $F$, and $G\simeq\gPGO^+(A,\sigma,f)$
for some quadratic pair $(\sigma,f)$ with discriminant $Z$ over a
central simple $F$-algebra $A$ of degree~$8$: see
\cite[\S17.3.13]{Sp}. This case has been discussed in
\S\ref{orthogonal.section}. For the rest of this appendix, we focus on
types $^1\sfD_4$ and $^3\sfD_4$.

\subsection*{Type $^1\sfD_4$}
In this case $L\simeq F\times F\times F$, hence $\Aut(\Delta)$ is the
symmetric group $\mathfrak{S}_3$, and $G$ may have outer automorphisms
of order~$2$ or $3$.

\begin{prop}
  Let $G$ be an algebraic group scheme of adjoint type $^1\sfD_4$.
  For every nontrivial subgroup $H\subset\Aut(\Delta)$, the following
  conditions are equivalent:
  \begin{enumerate}
  \item[(1)]
  every element in $H$ fixes the Tits class $t_G$;
  \item[(2)]
  $H$ is contained in the image of the canonical map
  $\Aut(G)\to\Aut(\Delta)$;
  \item[(3)]
  there is a subgroup $H'\subset\Aut(G)$ isomorphic to $H$ under the
  canonical map $\Aut(G)\to\Aut(\Delta)$.
  \end{enumerate}
  When $\lvert H\rvert=2$ the conditions above
  hold if and only if $G=\gPGO^+(q)$ for some
  $8$-dimensional quadratic 
  form $q$ with trivial discriminant. When $\lvert H\rvert=3$ or
  $6$, they hold if and only if $G=\gPGO^+(q)$ for some
  $3$-fold quadratic Pfister form $q$.
\end{prop}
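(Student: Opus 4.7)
The plan is to translate the statement into a condition on the three Tits algebras of $G$ and then invoke known results on quadratic forms and Cayley algebras. For $G$ of adjoint type $^1\sfD_4$ we have $G=\gPGO^+(A,\sigma,f)$ for a central simple $F$-algebra $A$ of degree~$8$ with a quadratic pair of trivial discriminant; the center of $C(A,\sigma,f)$ being $F\times F$, the Clifford algebra decomposes as $C^+\times C^-$ with each $C^\pm$ central simple of degree~$8$ over $F$. The fundamental relations used in the proof of Proposition~\ref{orthogonal.prop}(1) give $A\sim C^+\otimes_FC^-$, hence $[A]+[C^+]+[C^-]=0$ in the $2$-torsion of $\operatorname{Br}(F)$, while each of the three classes is of exponent dividing~$2$. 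These three classes form the Tits algebras of $G$, and $\mathfrak{S}_3=\Aut(\Delta)$ acts on $t_G$ by permuting them.

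The implications $(3)\Rightarrow(2)\Rightarrow(1)$ are immediate (the latter from Garibaldi's obstruction), so the essential content is $(1)\Rightarrow(3)$ together with the characterization in terms of $q$. If $\lvert H\rvert=2$, condition $(1)$ says that two of $[A]$, $[C^+]$, $[C^-]$ coincide, and the sum relation forces the third to vanish; a triality isomorphism brings the split algebra into the vector position, so $G\simeq\gPGO^+(q)$ for an $8$-dimensional form $q$ of trivial discriminant, and Remark~\ref{orthogonal.rem}(i) supplies the order-$2$ outer automorphism. If $\lvert H\rvert\in\{3,6\}$, condition~$(1)$ forces $[A]=[C^+]=[C^-]$ and hence, by the sum relation, all three to be zero; then $G=\gPGO^+(q)$ with trivial Clifford invariant $c(q)=[C^+]=0$, so $q$ has trivial discriminant and trivial Clifford invariant, i.e.\ $q\in I^3F$. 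Since $\dim q=8$, the Arason--Pfister Hauptsatz together with its refinement at the boundary dimension $2^3$ shows that $q$ is similar to a $3$-fold Pfister form, so $q$ may be taken to be Pfister without changing $G$.

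It remains to produce an outer automorphism of order~$3$ in the Pfister-form case and to check the converse implications. A $3$-fold Pfister form is the norm form of an essentially unique Cayley (octonion) $F$-algebra $\mathbb{O}$, and triality on $\mathbb{O}$ (see \cite[Ch.~VIII]{KMRT}) yields an outer automorphism of $\gPGO^+(q)$ of order~$3$; combined with an order-$2$ improper isometry this gives a subgroup of $\Aut(G)$ isomorphic to $\mathfrak{S}_3$ and lifting $\Aut(\Delta)$, proving $(3)$ when $\lvert H\rvert\in\{3,6\}$. For the converse, if $G=\gPGO^+(q)$ with $q$ of trivial discriminant, then $[A]=0$ and $[C^+]=[C^-]=c(q)$, so the transposition exchanging the half-spin vertices fixes $t_G$; and if in addition $q$ is a $3$-fold Pfister form, then $c(q)=0$, all three Tits classes vanish, and every element of $\mathfrak{S}_3$ fixes $t_G$. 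The main obstacle I expect is the octonionic realization of the order-$3$ automorphism: vanishing of the Tits class alone does not distinguish a $3$-cycle from a transposition at the level of $\Aut(G)(F)$, and one really needs the Cayley-algebra structure on $\mathbb{O}$ to construct the $3$-cycle explicitly.
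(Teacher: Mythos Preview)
Your approach is essentially the same as the paper's: reduce to the Tits-algebra triple $([A],[C^+],[C^-])$ with sum zero, deduce splitting from the permutation action fixed by $H$, and then invoke octonion triality for the order-$3$ lift. The paper phrases the $\lvert H\rvert=2$ step as ``choose the representation so that $\alpha$ swaps $C_\pm$'' rather than your ``apply a triality isomorphism to move the split class to the vector position,'' but these are the same manoeuvre.

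One point deserves more care. For $\lvert H\rvert=6$ you write that the order-$3$ triality automorphism ``combined with an order-$2$ improper isometry'' yields a subgroup of $\Aut(G)$ isomorphic to $\mathfrak{S}_3$. Having an element of order~$3$ and an element of order~$2$ mapping onto generators of $\Aut(\Delta)$ does not by itself guarantee that they generate a copy of $\mathfrak{S}_3$ in $\Aut(G)$; you must exhibit a specific order-$2$ element satisfying the dihedral relation with your triality automorphism. The paper handles this by citing \cite[(35.15)]{KMRT}, where the $\mathfrak{S}_3$ is built from the para-Cayley multiplication together with the conjugation of the para-Cayley algebra (not an arbitrary improper isometry). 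Your concluding remark correctly identifies the octonionic construction of the $3$-cycle as the delicate step, but the compatibility with the order-$2$ element is equally part of that obstacle.
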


Note that the conditions~(1), (2), (3) are analogues of (Out~1),
(Out~2), and (Out~3) respectively.

\begin{proof}
  The implications (3)~$\Rightarrow$~(2)~$\Rightarrow$~(1) are clear,
  hence it suffices to prove (1)~$\Rightarrow$~(3).
  Choose a representation $G\simeq\gPGO^+(A,\sigma,f)$ for some central 
  simple algebra $A$ of degree~$8$ with a quadratic pair $(\sigma,f)$
  of trivial discriminant.
  If $H$ contains an element $\alpha$ of
  order~$2$, we may choose the representation of $G$ in such a way
  that the action of $\alpha$ on the Tits algebras interchanges the
  two components $C_\pm(A,\sigma,f)$ of $C(A,\sigma,f)$,
  see~\cite[(42.3)]{KMRT}. Then (1) 
  implies $C_+(A,\sigma,f)\simeq C_-(A,\sigma,f)$. Similarly, any
  element of order~$3$ in $H$ permutes $A$, $C_+(A,\sigma,f)$, and
  $C_-(A,\sigma,f)$. Thus, in each case we have $C_+(A,\sigma,f)\simeq
  C_-(A,\sigma,f)$ if (1) holds. Using the fundamental relations
  between $A$ and $C(A,\sigma,f)$ in \cite[(9.12)]{KMRT}, we get that
  $A$ is split if (1)
  holds for any nontrivial $H$, and we may then represent $G$ as
  $\gPGO^+(q)$ for some $8$-dimensional quadratic form
  $q$ of trivial discriminant. Since every quadratic space admits
  square-central improper isometries, as pointed out in
  Remark~\ref{orthogonal.rem}, condition (3) holds if $\lvert
  H\rvert=2$. The proof is thus complete in this case.

  If $\lvert H\rvert=3$ or $6$, the preceding arguments show that
  $C_+(A,\sigma,f)$ and $C_-(A,\sigma,f)$ are isomorphic to $A$ when
  (1) holds, hence they are also split; this means that by scaling
  $q$ we may assume $q$ is a $3$-fold Pfister form. Now, for any
  $3$-fold Pfister form $q$ we may choose a para-Cayley algebra with
  norm form $q$, and use the multiplication in the algebra to define
  outer automorphisms of $\gPGO^+(q)$ of order~$3$, see
  \cite[(35.9)]{KMRT}. Using in addition the conjugation in the para-Cayley
  algebra, we may also define a subgroup of $\Aut(G)(F)$ isomorphic to
  $S_3$, see \cite[(35.15)]{KMRT}.
\end{proof}

\subsection*{Type $^3\sfD_4$}
In this case $L$ is a cyclic cubic field extension of $F$, hence
$\Aut_F(L)\simeq\Z/3\Z$. We may then again consider the conditions
(Out~1), (Out~2), and (Out~3), with the following slight modification:
in (Out~3), the outer automorphism has order~$3$ instead of $2$. If
$\charac F\neq2$, the group $G$ can be represented in the form
$G=\gPGO^+(T)$ for some trialitarian algebra\footnote{Trialitarian
  algebras are not defined in characteristic~$2$.} $T$, see
\cite[(44.8)]{KMRT}. The Allen invariant of $G$ is a central simple
$L$-algebra of degree~$8$.

\begin{prop}
  For $G=\gPGO^+(T)$ of type~$^3\sfD_4$,
  conditions~(Out~1) and (Out~2) 
  are equivalent; they hold if and
  only if the Allen invariant of $T$ is split. Condition~(Out~3) holds
  if and only if $T$ is the endomorphism algebra of a cyclic
  composition induced by a symmetric composition over $F$.
\end{prop}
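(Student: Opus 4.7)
The plan is to mirror the argument given above for type $^1\sfD_4$, replacing the symmetric group action by the cyclic action of $\Aut_F(L)\simeq\Z/3\Z$. Let $\rho$ generate $\Aut_F(L)$ and let $A$ denote the Allen invariant of $T$, a central simple $L$-algebra of degree~$8$; its Brauer class lies in $H^2(L,\gGm)$.

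First I would establish that (Out~1), (Out~2), and the statement ``$A$ is split'' are all equivalent. If $A$ is split, the Tits class is trivial and (Out~1) holds; moreover, $T$ is then (up to a scaling of the quadratic form) the endomorphism algebra of a cyclic composition on an $8$-dimensional $L$-space, and the cyclic structure produces an outer automorphism realizing the generator of $\Aut(\Delta)(F)$, whence (Out~2). The implication (Out~2)$\,\Rightarrow\,$(Out~1) is automatic. It remains to show (Out~1)$\,\Rightarrow\,$($A$ split). Under the $*$-action, $\rho$ permutes the three Tits algebras cyclically, which corresponds to Galois conjugation on $[A]$, so (Out~1) forces $[A]={}^{\rho}[A]={}^{\rho^2}[A]$. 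The fundamental relation for trialitarian algebras yields $\Cor_{L/F}[A]=[A]+{}^{\rho}[A]+{}^{\rho^2}[A]=0$, so combined with Galois-invariance this gives $3[A]=0$. Since the center of $\gSpin_8$ is $\mu_2\times\mu_2$, the Brauer class $[A]$ is also $2$-torsion; being both $2$- and $3$-torsion, $[A]=0$.

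For~(Out~3), I would adapt the descent argument of Proposition~\ref{descent1.prop}. Under (Out~1), $A$ is split and $T\simeq\End_L(V,q,\beta)$ for a cyclic composition $(V,q,\beta)$ over $L$ in the sense of~\cite{KMRT}. By the Skolem--Noether theorem, an outer automorphism of $\gPGO^+(T)$ of order~$3$ corresponds to a $\rho$-semilinear order-$3$ automorphism $\varphi$ of $(V,q,\beta)$. Taking fixed points then produces an $F$-subspace $V_0\subset V$ with $V=V_0\otimes_F L$, on which $q$ and $\beta$ restrict to a symmetric composition $\star$ over $F$; conversely, every cyclic composition induced from a symmetric composition by scalar extension carries the tautological order-$3$ semilinear automorphism $\Id_{V_0}\otimes\rho$, producing the required outer automorphism of~$\gPGO^+(T)$.

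The most delicate step will be the second half of the first claim: once $A$ is split, concretely exhibiting an outer automorphism of $\gPGO^+(T)$. The natural candidate arises from the triality principle applied to the cyclic composition, but one must check that the associated automorphism of the trialitarian algebra indeed descends to $\gPGO^+(T)$ and projects to a nontrivial element of $\Aut(\Delta)(F)$. This should follow from the theory of trialitarian algebras induced from cyclic compositions developed in~\cite{KMRT}; once that machinery is in place, the translations between cyclic compositions, symmetric compositions, trialitarian algebras, and their automorphism groups proceed along the same lines as in Section~\ref{translation.section}.
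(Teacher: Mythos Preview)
The paper does not prove this proposition; it cites \cite[Theorem~A]{GP} for the first assertion and \cite[Theorem~4.3]{KT} for the second. Your argument that (Out~1) forces $[A]=0$ is correct and goes beyond what the paper offers: $[A]$ is $2$-torsion (since $A$ carries an involution of the first kind over $L$), the Clifford relation for the underlying degree-$8$ orthogonal involution yields $[A]+{}^{\rho}[A]+{}^{\rho^2}[A]=0$ in the Brauer group of $L$, and Galois-invariance then gives $3[A]=0$, hence $[A]=0$. (Minor point: $\Cor_{L/F}[A]$ lies in the Brauer group of $F$, so your displayed equation conflates two different groups; only the relation over $L$ is needed, and that one does hold.)

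There are, however, two genuine gaps. First, ``$A$ split $\Rightarrow$ (Out~2)'': you rightly flag this as the delicate step, but ``the cyclic structure produces an outer automorphism'' is an assertion, not an argument, and this is precisely the content of \cite[Theorem~A]{GP}; it does not simply fall out of the formalism in \cite{KMRT}. Second, for (Out~3), ``by the Skolem--Noether theorem'' does not justify the passage from an order-$3$ outer automorphism of $\gPGO^+(T)$ to an order-$3$ $\rho$-semilinear automorphism of the cyclic composition $(V,q,\beta)$: Skolem--Noether applied to $\End_L V$ only produces a semilinear similitude of the underlying algebra with involution, and showing that it can be chosen compatible with the composition $\beta$ and of exact order~$3$ (despite the scaling ambiguity in the functor $\Gamma\mapsto\End\Gamma$) is the substance of \cite[Theorem~4.3]{KT}.
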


The first assertion is the main Theorem~A in Garibaldi--Petersson
\cite{GP}.  The second assertion is proved in \cite[Theorem~4.3]{KT}.

As a result of this proposition, it is easy to find examples of groups
of type $^3\sfD_4$ for which (Out~1) and (Out~2) hold while (Out~3)
fails: see \cite[Remark~2.1]{KT}.

\end{document}